\newtheorem{thm}{Theorem}[section]
\newtheorem{lemma}{Lemma}[section]
\newtheorem{remark}{Remark}[section]
\newtheorem{cor}{Corollary}[section]
\newtheorem{defin}{Definition}[section]
\newtheorem{prop}{Proposition}[section]
\numberwithin{equation}{section}
\newcommand{\mC}{\mathcal{C}}
\newcommand{\E}{\mathbb{E}}
\newcommand{\R}{\mathbb{R}}
\renewcommand{\P}{\mathbb{P}}
\newcommand{\mF}{\mathcal{F}}
\newcommand \mR {\mathcal{R}}
\newcommand \fR {\mathfrak{R}}
\newcommand  \N {\mathbb{N}}
\newcommand \tpsi {\tilde \psi}
\newcommand \tR {\tilde R}
\newcommand \bo {b_1}
\newcommand \bt {b_2}
\newcommand \tbo {\tilde b_1}
\newcommand \tbt {\tilde b_2}
\newcommand \opsi {\overline{\psi}}
\newcommand \upsi {\underline{\psi}}
\newcommand \tX {\tilde X}
\newcommand{\la} {\lambda}
\newcommand \del {\delta}
\newcommand \tet {\theta}
\newcommand \kap {\kappa}
\newcommand \bet {\beta}
\newcommand \gam {\gamma}
\newcommand \gamo {\gam_1}
\newcommand \gamt {\gam_2}
\newcommand \var {\varphi}
\newcommand \veps {\varepsilon}
\newcommand \tgamo {\tilde \gam_1}
\newcommand \tgamt {\tilde \gam_2}
\newcommand \Phin {\Phi_n}
\newcommand \Sm {S_m}
\newcommand \Smn {S_{m, n}}
\newcommand \xmn {x_{m, n}}
\newcommand \ymn {y_{m, n}}
\newcommand \qm {q_{m}}
\newcommand \hR {\widehat R}
\newcommand \cR {\check R}
\title{Minimizing the Discounted Probability of \break Exponential Parisian Ruin via Reinsurance}
\author{Xiaoqing Liang%
\thanks{Corresponding author. Department of Statistics, School of Sciences, Hebei University of Technology, Tianjin 300401, P.\ R.\ China, liangxiaoqing115@hotmail.com. X.\ Liang thanks the National Natural Science Foundation of China (11701139, 11571189) and the Natural Science Foundation of Hebei Province (A2018202057) for financial support.}
\and Virginia R. Young%
\thanks{Department of Mathematics, University of Michigan, Ann Arbor, Michigan, 48109, vryoung@umich.edu. V.\ R.\ Young thanks the Cecil J. and Ethel M. Nesbitt Professorship of Actuarial Mathematics for financial support.}
}
\date{\today}
\begin{document}
\maketitle

\begin{abstract}
We study the problem of minimizing the discounted probability of {\it exponential Parisian ruin}, that is, the discounted probability that an insurer's surplus exhibits an excursion below zero in excess of an exponentially distributed clock.  The insurer controls its surplus via reinsurance priced according to the mean-variance premium principle, as in Liang, Liang, and Young \cite{LLY2019}.  We, first, find the optimal reinsurance strategy for a diffusion approximation of the classical Cram\'er-Lundberg risk model.  Then, we consider the classical risk model itself and apply stochastic Perron's method, as introduced by Bayraktar and S\^irbu \cite{BS2012, BS2013, BS2014}, to show that the minimum discounted probability of exponential Parisian ruin is the unique viscosity solution of its Hamilton-Jacobi-Bellman equation with boundary conditions at $\pm \infty$.

\medskip

{\bf Keywords.}  Exponential Parisian ruin; Cram\'er-Lundberg risk model; diffusion approximation; optimal reinsurance; mean-variance premium principle.

\medskip

{\bf AMS 2010 Subject Classification.}    93E20, 91B30, 47G20, 45J05.

\medskip

{\bf JEL Codes.}  C61, D81, G22.

\end{abstract}

\section{Introduction}

In classical risk theory, the probability of ruin has been a fundamental measure of the insurer's overall risk.  However, as discussed in dos Reis \cite{dR1993}, sometimes the probability of ruin is extremely small, or the portfolio might be one out of many existing businesses in the company.  There is a possibility that the insurer has extra funds (or can borrow money) to maintain the negative surplus until the portfolio recovers and before regulators detect the insurer's bankruptcy.   With this in mind, researchers introduced the concept of Parisian ruin, which arises from Parisian options; see Chesney, Jeanblanc-Picqu\'e, and Yor \cite{CJY1997}. (Ordinary) Parisian ruin occurs if an excursion below zero is longer than a deterministic time. Dassios and Wu \cite{DW2008} first extended the concept of ruin to this Parisian type of ruin and computed it for a classical risk model with exponential claims and for a diffusion approximation of the classical risk model.  Recently, Parisian ruin has been actively studied by, for example, Czarna and Parmowski \cite{CP2011}, Loeffen, Czarna, and Palmowski  \cite{LCP2013}, Guerin and Renaud \cite{GR2017}, Czarna and Palmowski \cite{CP2014}, Baurdoux, Pardo, P{\'e}rez, and Renaud \cite{BPPR2016}, and Renaud \cite{R2019}.  Most of this research focuses on calculating the probability of Parisian ruin via excursion theory of spectrally negative L{\'e}vy processes and their associated scale functions; none of the above-mentioned research controls the probability of Parisian ruin.  By contrast, Liang and Young \cite{LY2019} studied the stochastic control problem of optimally investing to minimize the probability of lifetime exponential Parisian ruin by solving a boundary-value problem formed by the problem's associated Hamilton-Jacobi-Bellman (HJB) equation and boundary conditions.

Stochastic Perron's method was introduced in Bayraktar and S{\^i}rbu \cite{BS2012} for linear problems, Bayraktar and S{\^i}rbu \cite{BS2013} for nonlinear problems of HJB equations in stochastic control, and Bayraktar and S{\^i}rbu \cite{BS2014} for problems related to Dynkin games. Stochastic Perron's method provides a way to show that the value function of the stochastic control problem is the unique viscosity solution of the associated HJB equation. Unlike the classical verification approach, it requires neither the dynamic programming principle nor the regularity of the value function.  Roughly speaking, stochastic Perron's method consists of the following steps: (1) estimate the value function from below and above by stochastic sub- and supersolutions, (2) prove that  the supremum and the infimum of the respective families are viscosity super- and subsolutions, respectively, and (3) prove a comparison principle for viscosity sub- and supersolutions, which has an immediate corollary that the value function is the unique (continuous) viscosity solution of its HJB equation.  As opposed to the classical verification method, the comparison principle here plays a role for both verification and uniqueness.  More recently, stochastic Perron's method was applied to solve an exit-time problem in Rokhlin \cite{R2014}, a transaction-cost problem in Bayraktar and Zhang \cite{BZ2015b}, stochastic target problems in Bayraktar and Li \cite{BL2016a, BL2016b, BL2017}, and a problem of optimal consumption and information in Yang and Yu \cite{YY2019}.

In this paper, we consider the problem of minimizing the discounted probability of exponential Parisian ruin for an insurance company who can purchase per-loss reinsurance.  We model the time that a regulator inspects the insurance company's accounting records (after bankruptcy occurs) via an exponentially distributed random variable with mean $1/\rho$.  {\it Exponential Parisian ruin} occurs if bankruptcy occurs, and if the company is still in bankruptcy when the regulator inspects the records.  In other words, as soon as the surplus process becomes negative, an independent exponential alarm clock begins to run; if the alarm rings before the surplus becomes positive again, then exponential Parisian ruin occurs.  Mathematically, our problem generalizes the ordinary-ruin problem and retains its one-dimensional property; specifically, because an exponential random variable is memoryless (that is, has a constant hazard rate), we do not need to introduce a time variable.

We assume that the insurer is allowed to purchase per-loss reinsurance, which is priced according to the mean-variance premium principle, as in Han, Liang, and Young \cite{HLY2019} and Liang, Liang, and Young \cite{LLY2019}.  We, first, find an explicit expression for the optimal reinsurance strategy for the diffusion approximation of the classical Cram\'er-Lundberg risk model.  Then, we consider the classical model itself, for which we cannot find an explicit expression of the minimum discounted probability of exponential Parisian ruin.  Instead, we use stochastic Perron's method to prove that the value function is the unique (continuous) viscosity solution of its associated discontinuous HJB equation with boundary conditions (Theorem \ref{thm:value}).

We extend the results of Bayraktar and S{\^i}rbu \cite{BS2013} to a problem with controlled Poisson jumps.  Most of the existing literature studies stochastic Perron's method under a diffusion model.  The major difficulty of the jump process comes from the fact that the surplus process might jump outside a small neighborhood, and the local results obtained from the viscosity sub- and supersolution property cannot be used.  Bayraktar and Zhang \cite{BZ2015b} also faced a similar problem, but the jump of their model occurs from singular control; to show the viscosity supersolution property of one of their bounds, they overcame the difficulty by splitting the jump into two parts--first to a point on the boundary of the neighborhood and, then, to its finally destination.  For the viscosity subsolution property of the other bound, they chose the control to be identically zero.  Another related work is Bayraktar and Li \cite{BL2016b}, who considered stochastic target problems with jump diffusions.  By applying a result in Bouchard and Dang \cite{BD2012}, namely, that a stochastic control problem can be converted to a stochastic target problem with unbounded controls, one could apply the results of Bayraktar and Li \cite{BL2016b} to our problem.  However, we provide direct proofs of the necessary viscosity properties (Theorems \ref{thm:v_plus} and \ref{thm:u_minus}) instead of relying on their indirect result.  More importantly, Bayraktar and Li \cite{BL2016b} took the comparison principle as an assumption and did not prove it; by contrast, we prove the comparison principle for our problem.

Another contribution of our paper is that we give a complete proof of the comparison principle for our specific control problem (Theorem \ref{thm:comp}). The challenge in the proof comes from the fact that the Hamiltonian in our model is discontinuous at zero, which means that we cannot apply the standard proof of doubling the variables. To overcome this difficulty, we borrow an idea from Giga, G\'orka, and Rybka \cite{GGR2011}. First, we define a continuous Hamiltonian, which approximates the discontinuous one.  Then, we verify that the viscosity supersolution of the discontinuous Hamiltonian is also a viscosity supersolution of the continuous, approximate Hamiltonian.  Next, we construct a new viscosity subsolution, which approximates the original one, for the continuous Hamiltonian.  Finally, we prove a comparison principle for the continuous, approximate Hamiltonian with unbounded domain and non-local jump term.  By taking the limit of resulting inequality between the new viscosity subsolution and the original supersolution, we obtain a comparison principle for our original sub- and supersolutions.

Two related papers also rely on viscosity solutions.  Azcue and Muler \cite{AM2005} explored an optimal dividend and reinsurance problem for the classical risk model, and they control the jump process via reinsurance, as we do in this paper.  Barles and Imbert \cite{BI2008} considered a second-order elliptic integro-differential equation.  However, the Hamiltonians in their models are continuous, so we cannot rely on their work to prove the comparison principle for our problem.

The remainder of this paper is organized as follows. In Section 2, we describe the reinsurance market and set up the problem of minimizing the discounted probability of exponential Parisian ruin.  Section 3 provides the explicit solution for the minimum discounted probability of exponential Parisian ruin for the diffusion approximation of the classical risk model. Section 4 is devoted to studying the minimum discounted probability of exponential Parisian ruin for the classical risk model.  We use stochastic Perron's method to prove that the value function is the unique (continuous) viscosity solution of its associated discontinuous HJB equation.  In Section 4.1, we consider the analog of the adjustment coefficient and construct a function that is a stochastic supersolution.  To apply stochastic Perron's method, in Section 4.2, we define stochastic supersolutions and prove that the infimum of all stochastic supersolutions is a viscosity subsolution. In Section 4.3, we define stochastic subsolutions and prove that the supremum of all stochastic subsolutions is a viscosity supersolution.  In Section 4.4, we prove the comparison principle for our problem, which ensures that a viscosity subsolution is smaller than a viscosity supersolution.  Hence, we obtain our conclusion.

\section{Reinsurance framework}\label{sec:fin-model}

In this section, we describe the reinsurance market available to the insurance company, and we formulate the problem of minimizing the discounted probability of exponential Parisian ruin.  Assume that all random processes exist on the filtered probability space $\big( \Omega, \mathcal{F}, \mathbb{F} = \{ F_t \}_{t \ge 0}, \mathbb{P} \big)$.

We model the insurer's claim process $C = \{C_t\}_{t \ge 0}$ according to a compound Poisson process, namely,
\begin{equation}
\label{eq:CPP}
C_t = \sum_{i = 1}^{N_t} Y_i,
\end{equation}
in which the claim severities $Y_1, Y_2, \dots$ are independent and identically distributed according to a common cumulative distribution function $F_Y$, with $Y > 0$ a.s., and in which the claim frequency $N = \{ N_t \}_{t \ge 0}$ follows a Poisson process with parameter $\la > 0$.  Let $S_Y = 1 - F_Y$ denote the survival function of $Y$, and assume that $\E Y < \infty$ and $\E \big( Y^2 \big) < \infty$.  Also, assume that the insurer receives premium payable continuously at a rate $c > \la \E Y$, and assume that the Poisson process $N$ is independent of the claim severity process $Y$.

The insurer can buy per-loss reinsurance; let $R_t(y)$ denote the retained claim at time $t \ge 0$, as a function of the (possible) claim $Y = y$ at that time.  Thus, reinsurance indemnifies the insurer with the amount $y - R_t(y)$ if there is a claim $y$ at time $t \ge 0$.   The reinsurance premium is continuously payable computed according to the mean-variance principle with non-negative risk loadings $\tet$ and $\eta$, that is, the reinsurance premium rate at time $t$ equals
\begin{equation}\label{eq:reins_prem}
(1 + \tet) \la \E \big( Y - R_t(Y) \big) + \dfrac{\eta}{2} \, \la \E \big( (Y - R_t(Y))^2 \big).
\end{equation}
Assume that
\begin{equation}\label{eq:c_not_huge}
c < (1 + \tet) \la \E Y + \dfrac{\eta}{2} \, \la \E \big(Y^2 \big);
\end{equation}
in words, the insurer's premium income is not sufficient to buy full reinsurance.  A retention strategy $\mR = \{ R_t \}_{t \ge 0}$ is {\it admissible} if for fixed $y$, the mapping $(t,w) \mapsto R_t(\omega, y)$ is $\mathbb{F}$-predictable, and for fixed $(t,w)$, $R_t(\omega, y)$ is
 $\mathcal{B}(\R^+)$-measurable, in which $\mathcal{B}(\R^+)$ denotes the Borel $\sigma$-algebra on $\R^+$,\footnote{We will generally drop the dependence on $\omega$ when writing $R_t = R_t(y)$.} and satisfies $0 \le R_t(y) \le y$, for all $t \ge 0$.  Denote the set of admissible strategies by $\fR$.

\begin{remark}\label{rem:moralhazard}
Our assumptions concerning the retention strategy $\mR$ are designed to avoid moral hazard. Indeed, if $R_t(y) < 0$ for some values of $t$ and $y$, then the insurer would have an incentive to create a loss of $y$ at time $t$ to obtain a payment $-R_t(y) > 0$ from the reinsurance company. If $R_t(y) > y$, then the insurer would be acting like a reinsurer, and we wish to keep the roles of insurer and reinsurer separate.

Also, the reinsurance to be paid at time $t$ is chosen before the possible claim at time $t$ occurs.  Otherwise, the insurer would change its retention to $0$ immediately when a claim occurs.  \qed

\end{remark}

Given a retention strategy $\mR \in \fR$, the insurer's surplus follows the dynamics
\begin{align}
\label{eq:X}
dX^\mR_t &= \left( c - (1 + \tet) \la \E \big( Y - R_t(Y) \big) - \dfrac{\eta}{2} \, \la \E \big( (Y - R_t(Y))^2 \big) \right)dt - R_t(Y) dN_t \notag \\
&= \left( - \kap + \la \left( (1 + \tet) \E R_t(Y) + \eta \E \big( Y R_t(Y) \big) - \dfrac{\eta}{2} \, \E \big( R^2_t(Y) \big) \right) \right)dt - R_t(Y) dN_t,
\end{align}
in which $\kap$ is the positive constant
\begin{equation}\label{eq:kap}
\kap = (1 + \tet) \la \E Y + \dfrac{\eta}{2} \, \la \E \big(Y^2 \big) - c.
\end{equation}

By {\it exponential Parisian ruin}, we mean an excursion of surplus below zero in excess of a random length of time $\tau_\rho$, in which $\tau_\rho$ is exponentially distributed with hazard rate $\rho$, that is, with $\E(\tau_\rho) = 1/\rho$.  We assume that $\tau_\rho$ is independent of the claim process $C$.  One could also consider an excursion of surplus below some arbitrary level, not necessarily $0$, but for ease of presentation, we choose the level to be $0$.  Following Gu\'erin and Renaud \cite{GR2017}, we define exponential Parisian ruin by
\begin{equation}\label{eq:expPar_ruin}
K = \inf \{t > 0: t - g_t > \tau_\rho \},
\end{equation}
in which $g_t = \sup \{ s \in [0, t]: X_s \ge 0 \}$ with $\sup \emptyset = 0$.\footnote{More technically accurate, in place of $\tau_\rho$, we have a sequence $\tau_\rho^1, \tau_\rho^2, \dots$, of i.i.d.\ random variables, each of which begins when the surplus is newly negative.}  Note that we reset the ``excursion clock'' to $0$ whenever surplus reaches $0$ from below; indeed, if $X_t \ge 0$, then $t - g_t = 0$, and $K = \inf \emptyset = \infty$.  We wish to minimize the discounted probability of exponential Parisian ruin, with value function
\begin{equation}\label{eq:psi}
\psi(x) = \inf_{\mR \in \fR} \E^x \Big( e^{-\bet K} {\bf 1}_{\{K < \infty\}} \Big),
\end{equation}
in which $\E^x$ denotes expectation conditional on $X_0 = x$, and $\bet > 0$ measures the insurer's time value of exponential Parisian ruin.  If exponential Parisian ruin were to occur at a distant time in the future, then the insurer would be less unhappy than if ruin were to occur today.

Clearly, the value function $\psi$ is non-increasing on $\R$, with $\lim \limits_{x \to -\infty} \psi(x) = \rho/(\rho + \bet)$ and $\lim \limits_{x \to \infty} \psi(x) = 0$.  Also, $\psi$ is non-decreasing with respect to $\rho$, the hazard rate for the exponential Parisian clock, and as $\rho \to \infty$, we expect the value function to approach the discounted probability of ordinary ruin.  At the other extrem, as $\rho \to 0$, exponential ruin cannot occur, and $\psi$ converges to $0$ on $\R$.  Moreover, we informally prove the following lemma, which will be useful in Section \ref{sec:CPP}.

\begin{lemma}\label{lem:psi_pos}
For $x \in \R$, we have the strict inequality
\begin{equation}\label{eq:psi_pos}
\psi(x) > 0.
\end{equation}
\end{lemma}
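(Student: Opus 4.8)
The plan is to bound $\E^x\big(e^{-\bet K}{\bf 1}_{\{K<\infty\}}\big)$ below by a positive constant that does not depend on the retention strategy $\mR\in\fR$. Two structural facts hold for every $\mR\in\fR$: the surplus $X^\mR$ has only downward jumps, and, by \eqref{eq:X}, its drift lies in the fixed interval $[-\kap,\,c]$ --- the map $R\mapsto(1+\tet)\E R(Y)+\eta\E(YR(Y))-\tfrac{\eta}{2}\E(R^2(Y))$ is nonnegative on $\{0\le R\le Y\}$, since its pointwise integrand is increasing in $R$ and vanishes at $R=0$, and it is maximized at $R\equiv Y$, where its value is $(\kap+c)/\la$. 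Consequently $X^\mR_{s+t}\le X^\mR_s+ct$ for all $s,t\ge0$. It therefore suffices to exhibit $t_0>0$ and $p>0$, depending on $x$ but not on $\mR$, with $\P^x(K\le t_0)\ge p$; then $\E^x\big(e^{-\bet K}{\bf 1}_{\{K<\infty\}}\big)\ge e^{-\bet t_0}\,p$, and taking the infimum over $\mR\in\fR$ gives \eqref{eq:psi_pos}.

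\emph{Case $x<0$.} Here $X^\mR_0=x<0$, so $g_0=0$ and the Parisian clock runs from time $0$. On the event $\{\tau_\rho<|x|/c\}$ we have $X^\mR_t\le x+ct<0$ for every $t\le\tau_\rho$, hence $g_t=0$ and $t-g_t=t$ on that interval, so $K=\tau_\rho<|x|/c$. Since $\tau_\rho$ is independent of $\mR$, this event has probability $1-e^{-\rho|x|/c}>0$, and I would take $t_0=|x|/c$, giving $\psi(x)\ge e^{-\bet|x|/c}\big(1-e^{-\rho|x|/c}\big)>0$.

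\emph{Case $x\ge0$.} Here I would first show that $X^\mR$ reaches the level $-1$ within a deterministic time $t_1$ with probability bounded away from $0$ uniformly in $\mR$, and then splice on the previous case. Fix $t_1:=2(x+1)/\kap$, and write $L_t$ for the cumulative retained claims on $[0,t]$, so $X^\mR_t=x+\int_0^tb_s\,ds-L_t$ with $b_s\le-\kap+\la g(R_s)$ and $g(R_s)\in[0,\bar g]$, $\bar g:=(1+\tet)\E Y+\tfrac{\eta}{2}\E(Y^2)$. If $\int_0^{t_1}g(R_s)\,ds\le\kap t_1/(2\la)$, then $X^\mR_{t_1}\le x-\tfrac{\kap}{2} t_1=-1$, so the level $-1$ is reached with probability one. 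Otherwise $\int_0^{t_1}g(R_s)\,ds>\kap t_1/(2\la)$, and since $g\le\bar g$ the predictable set $D:=\{s\le t_1:g(R_s)>\kap/(4\la)\}$ has Lebesgue measure at least $\ell_0:=\kap t_1/(4\la\bar g)>0$; moreover $g(R_s)>\kap/(4\la)$ forces the retention to be substantial, in the sense that $\P\big(R_s(Y)>r_0\big)>p_0$ for constants $r_0,p_0>0$ depending only on $\la,\tet,\eta,\kap$ and the law of $Y$ (here one uses that $Y$ and $Y^2$ are integrable, hence uniformly integrable). Because $D$ is predictable and the claims arrive at rate $\la$, with probability bounded away from $0$ uniformly in $\mR$ a burst of retained claims occurs during $D$ whose total retained amount exceeds $x+ct_1+1$, which again drives $X^\mR$ below $-1$ by time $t_1$. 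Calling $q>0$ the resulting uniform lower bound for $\P^x(\sigma\le t_1)$, where $\sigma:=\inf\{t:X^\mR_t\le-1\}$, I would then use the strong Markov property at $\sigma$: on $\{\sigma\le t_1\}$ we have $X^\mR_{\sigma+u}\le-1+cu<0$ for $u<1/c$, so $X^\mR$ stays negative throughout $[\sigma,\sigma+1/c)$, and by the memorylessness of $\tau_\rho$ the conditional probability that the current excursion outlasts the clock within the next $1/c$ units of time is at least $1-e^{-\rho/c}$. Hence $\P^x\big(K\le t_1+1/c\big)\ge q\,(1-e^{-\rho/c})>0$, and I would take $t_0=t_1+1/c$.

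The step I expect to be the main obstacle is the uniform-in-$\mR$ estimate in the case $x\ge0$. The difficulty is exactly that the insurer can cancel the jumps of the claim process by buying (near-)full reinsurance; the argument must then exploit that doing so forces the strictly negative drift $-\kap<0$ --- this is where assumption \eqref{eq:c_not_huge} enters --- and must also cope with the insurer's freedom to alternate between the ``over-reinsure'' and ``under-reinsure'' regimes along the path, quantifying how large a retained-claim burst the chosen strategy is necessarily exposed to. This is why the lemma is proved only informally; once a fixed negative level is reached, the memorylessness of the Parisian clock together with the strong Markov property makes the splicing routine.
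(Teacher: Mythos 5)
Your proof takes a genuinely different route from the paper's, and it contains a real gap that you yourself flag. The paper's (explicitly informal) argument factors exponential Parisian ruin through \emph{ordinary} ruin: for fixed $\mR$ it writes, roughly, $\psi^\mR(x) \ge \psi_0^\mR(x)\cdot\E\big[1-e^{-\rho Z/c}\,\big|\,\tau_0<\infty\big]$, where $Z$ is the deficit at $\tau_0$ and $z/c$ bounds from below the duration of the excursion that begins with deficit $z$; positivity of the minimum ordinary-ruin probability $\psi_0$ on $\R^+$ is then imported wholesale from Schmidli \cite{S2008}, Section 2.3.1. Your sketch replaces that external ingredient with a direct dynamical estimate --- a uniform-in-$\mR$ lower bound on $\P^x(\sigma\le t_1)$, with $\sigma$ the hitting time of $-1$ --- and this is precisely the step you do not complete. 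The difficulty is real: your two alternatives for $\int_0^{t_1}g(R_s)\,ds$ are $\mF_{t_1}$-measurable random events rather than deterministic cases, and the insurer can switch to full reinsurance as soon as the surplus begins to fall, so the predictable set $D$ on which retention is ``substantial'' is entangled with the very retained claims you want to count. Turning ``a burst of retained claims exceeding $x+ct_1+1$ occurs with probability bounded away from zero'' into a proof requires a genuine quantitative argument: the number of needed claims grows with $x$, their arrivals must land in $D$, and the success probability $p_0$ is conditional and strategy-dependent. In effect your $x\ge 0$ case is an attempt to reprove the Schmidli-type first-passage positivity that the paper cites, and that is exactly where the gap lies.

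On the positive side, your $x<0$ case --- using only $X^\mR_t\le x+ct$ and the independence of $\tau_\rho$ from $\mR$ to conclude $\psi(x)\ge e^{-\bet|x|/c}\big(1-e^{-\rho|x|/c}\big)$ --- is short, fully rigorous, and gives a quantitative lower bound that the paper does not state (it obtains positivity on $(-\infty,0)$ only by monotonicity from $\R^+$). Your splicing at $\sigma$ via the strong Markov property and the memorylessness of the exponential clock is also correct as written, and the reduction ``drift bounded above by $c$, jumps only downward'' is the same observation the paper exploits through the $z/c$ bound. So the architecture is sound; what is missing is the uniform hitting estimate, which the paper sidesteps by citation, so that neither proof is fully self-contained at the level of rigor the lemma statement would normally demand.
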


\begin{proof}
We prove this lemma for $\bet = 0$; $\bet > 0$ will not change the positivity of $\psi$.  In Section 2.3.1 of \cite{S2008}, Schmidli proves that the minimum probability of ordinary ruin, controlled by reinsurance subject to a mathematically suitable premium rule (for example, the mean-variance premium principle), is strictly decreasing on $\R^+$; thus, it is strictly positive on $\R^+$.  For our problem, exponential Parisian ruin requires, first, that the surplus process become negative (that is, ordinary ruin occurs); then, while the surplus is negative, the alarm in the exponential Parisian clock must ring, intuitively speaking.

Fix a retention strategy $\mR$, and let $\psi^\mR_0$ and $\psi^\mR$ denote the probability of ordinary ruin and exponential Parisian ruin, respectively, under the retention strategy $\mR$.  Also, let $Z$ denote the random variable of the deficit at ruin with cumulative distribution function $F_Z$, which might depend on $X^\mR_0 = x$.  Then, if we consider the first excursion of surplus into the negative reals, we obtain, for $x \ge 0$,
\begin{equation}\label{ineq:psi_psi0}
\psi^\mR(x) \ge \psi^\mR_0(x) \cdot \P \left[ \int_0^{\infty} \left(1 - e^{- \rho z/c} \right) dF_Z(z) \bigg| \tau_0 < \infty \right] > 0,
\end{equation}
in which $\tau_0 = \inf \{t \ge 0: X^\mR_t < 0\}$ is the time of ordinary ruin, and in which the second inequality follows from the positivity of $\psi^\mR_0$.  The integrand $1 - e^{- \rho z/c}$ equals the probability that the exponential Parisian alarm clock rings during the shortest possible time the insurer spends in this excursion, namely, $z/c$.  First, minimize $\psi^\mR_0$ over admissible retention strategies $\mR$, then minimize $\psi^\mR$ over admissible retention strategies to obtain
\[
\psi(x) \ge \psi_0(x) \cdot \P \left[ \int_0^{\infty} \left(1 - e^{- \rho z/c} \right) dF_Z(z) \bigg| \tau_0 < \infty \right] > 0,
\]
in which the second inequality follows from Section 2.3.1 of Schmidli \cite{S2008}.  Thus, $\psi$ is strictly positive on $\R^+$ (discounting for the time of ruin does not change this conclusion); moreover, because $\psi$ is non-increasing, it is strictly positive on all of $\R$.
\end{proof}

In Section \ref{sec:CPP}, we work with the classical risk model in \eqref{eq:X}; but, first, in Section \ref{sec:diff}, we obtain explicit results by considering the diffusion approximation of $X$. To obtain the diffusion approximation, we match the first two moments at all times $t \ge 0$, as in Grandell \cite{G1991}.  Specifically,
\[
R_t(Y) dN_t \approx \la \E R_t(Y) dt - \sqrt{\la \E \big(R^2_t(Y) \big)} \, dB_t,
\]
in which $B = \{ B_t \}_{t \ge 0}$ is a standard Brownian motion on $\big( \Omega, \mathcal{F}, \mathbb{F}, \mathbb{P} \big)$.
Thus, in Section \ref{sec:diff}, we assume that surplus approximately follows the process $\tX = \{ \tX_t \}_{t \ge 0}$ with dynamics
\begin{align}
\label{eq:Xtilde}
d \tX_t &= \left( - \kap + \la \left( \tet \E R_t(Y) + \eta \E \big( Y R_t(Y) \big) - \dfrac{\eta}{2} \, \E \big( R^2_t(Y) \big) \right) \right)dt + \sqrt{\la \E \big(R^2_t(Y) \big)} \, dB_t .
\end{align}
Let $\tilde K$ and $\tpsi$ denote the corresponding exponential Parisian ruin and minimum discounted probability thereof, respectively.

\section{Discounted exponential Parisian ruin: diffusion approximation}\label{sec:diff}

We begin by stating a relevant verification theorem without proof because its proof is standard in the actuarial and financial mathematics literature; see, for example, the proof of Theorem 3.1 in Han, Liang, and Young \cite{HLY2019}.

\begin{thm}\label{thm:verif_diff}
Suppose $v$$: \R \to \left[0, \, \frac{\rho}{\rho + \bet} \right]$ is a function that satisfies the following conditions.
\begin{enumerate}
\item{} $v$ is decreasing and lies in $\mC^2(\R),$ except at $0,$ where it is $\mC^1$ and has left- and right-second derivatives.
\item{} $\lim \limits_{x \to -\infty} v(x) = \dfrac{\rho}{\rho + \bet} \,$.
\item{} $\lim \limits_{x \to \infty} v(x) = 0$.
\item{} $v$ solves the following Hamilton-Jacobi-Bellman $($HJB$)$ equation on $\R$$:$
\begin{equation}\label{eq:HJB_diff}
\bet v + \rho( v - 1) {\bf 1}_{\{x < 0\}} = - \kap v_x + \la \inf \limits_R \bigg[ \left( \tet \E R  + \eta \E \big( Y R \big) - \dfrac{\eta}{2} \, \E \big( R^2 \big) \right) v_x + \frac{1}{2} \, \E \big( R^2 \big) v_{xx} \bigg],
\end{equation}
\end{enumerate}
in which we take the infimum over retention functions $R$ such that $0 \le R(y) \le y$ for all $y \in \R^+$.  Then, the minimum discounted probability of exponential Parisian ruin $\tpsi$ equals $v$, and the optimal retention strategy $\tilde \mR$ is given in feedback form by the {\rm arg min} of \eqref{eq:HJB_diff}. \qed
\end{thm}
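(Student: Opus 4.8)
The plan is a standard It\^o/Feynman--Kac verification argument, preceded by a reformulation of the objective that converts the excursion clock into a state-dependent killing rate. \emph{Step 1 (reformulation).} Using the memorylessness of the exponential alarm clock, I would first rewrite, for each $\mR\in\fR$,
\begin{equation*}
\tpsi^\mR(x):=\E^x\big(e^{-\bet\tilde K}{\bf 1}_{\{\tilde K<\infty\}}\big)=\E^x\!\left(\int_0^\infty\rho\,{\bf 1}_{\{\tX^\mR_t<0\}}\,e^{-\bet t-\rho\Gamma^\mR_t}\,dt\right),\qquad \Gamma^\mR_t:=\int_0^t{\bf 1}_{\{\tX^\mR_s<0\}}\,ds.
\end{equation*}
Conditionally on the surplus path, the i.i.d.\ family of exponential clocks makes $\tilde K$ the first epoch of a rate-$\rho$ Poisson process run along the occupation time $\Gamma^\mR$ of $(-\infty,0)$, so $\P(\tilde K>t\mid\tX^\mR)=e^{-\rho\Gamma^\mR_t}$, and integrating $e^{-\bet t}$ against the conditional density $\rho\,{\bf 1}_{\{\tX^\mR_t<0\}}e^{-\rho\Gamma^\mR_t}$ of $\tilde K$ gives the identity. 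The left-hand side $\bet v+\rho(v-1){\bf 1}_{\{x<0\}}$ of \eqref{eq:HJB_diff} is exactly the Feynman--Kac operator attached to this killed functional, so \eqref{eq:HJB_diff} reads $\inf_R\mL^R v(x)=\bet v(x)+\rho(v(x)-1){\bf 1}_{\{x<0\}}$, where $\mL^R$ is the generator of $\tX$ under the constant retention $R$, that is, the operator inside the infimum of \eqref{eq:HJB_diff} together with the $-\kap v_x$ term.

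\emph{Step 2 ($v\le\tpsi$).} Fix $\mR\in\fR$ and set $D^\mR_t:=e^{-\bet t-\rho\Gamma^\mR_t}$. I would apply It\^o's formula to $D^\mR_t\,v(\tX^\mR_t)$; since $v\in\mC^1(\R)$ with one-sided, locally bounded second derivatives and $\{s:\tX^\mR_s=0\}$ carries zero $d\langle\tX^\mR\rangle$-mass (occupation-density formula), the It\^o formula applies with no local-time term, giving
\begin{equation*}
d\big(D^\mR_t v(\tX^\mR_t)\big)=D^\mR_t\Big[\mL^{R_t}v(\tX^\mR_t)-\big(\bet+\rho{\bf 1}_{\{\tX^\mR_t<0\}}\big)v(\tX^\mR_t)\Big]dt+D^\mR_t v_x(\tX^\mR_t)\sqrt{\la\E(R_t^2(Y))}\,dB_t.
\end{equation*}
By the HJB equation, $\mL^{R_t}v(\tX^\mR_t)\ge\bet v(\tX^\mR_t)+\rho(v(\tX^\mR_t)-1){\bf 1}_{\{\tX^\mR_t<0\}}$, so the drift bracket is $\ge-\rho{\bf 1}_{\{\tX^\mR_t<0\}}$, and hence $D^\mR_t v(\tX^\mR_t)+\int_0^t\rho{\bf 1}_{\{\tX^\mR_s<0\}}D^\mR_s\,ds$ is a local submartingale. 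I would stop at $\tau_n:=\inf\{t\ge0:|\tX^\mR_t|\ge n\}$ (on $[0,\tau_n]$ the stochastic integral has a bounded integrand, using $0\le R_t\le Y$ and hence $\E(R_t^2)\le\E(Y^2)<\infty$, so it is a true martingale), take expectations to get $v(x)\le\E^x[D^\mR_{t\wedge\tau_n}v(\tX^\mR_{t\wedge\tau_n})]+\E^x[\int_0^{t\wedge\tau_n}\rho{\bf 1}_{\{\tX^\mR_s<0\}}D^\mR_s\,ds]$, then let $n\to\infty$ (the coefficients of \eqref{eq:Xtilde} are bounded, so $\tX^\mR$ does not explode and $\tau_n\to\infty$) and $t\to\infty$. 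Dominated convergence handles the boundary term, which vanishes because $0\le v\le\rho/(\rho+\bet)$ and $\bet>0$ give $D^\mR_t v(\tX^\mR_t)\le e^{-\bet t}\rho/(\rho+\bet)\to0$, and monotone convergence handles the integral; this gives $v(x)\le\tpsi^\mR(x)$. Since $\mR\in\fR$ is arbitrary, $v\le\tpsi$ on $\R$.

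\emph{Step 3 (attainment and conclusion).} For fixed $x$, the infimum in \eqref{eq:HJB_diff} decouples into the pointwise-in-$y$ minimization of $r\mapsto\la\big[(\tet+\eta y)v_x(x)\,r+\tfrac12(v_{xx}(x)-\eta v_x(x))r^2\big]$ over $r\in[0,y]$, which has an explicit measurable minimizer $R^*(y;x)$; then $\tilde\mR=\{R^*(\,\cdot\,;\tX_t)\}_{t\ge0}$ is an admissible feedback strategy and \eqref{eq:Xtilde} under $\tilde\mR$ admits a nonexploding solution. Along $\tilde\mR$ the drift bracket of Step 2 equals exactly $-\rho{\bf 1}_{\{\tX_t<0\}}$, so the corresponding process is a local \emph{martingale}; the same localization, together with the boundedness of $v$ and of $\int_0^\infty\rho{\bf 1}_{\{\tX_s<0\}}D_s\,ds\le\rho/\bet$, upgrades the preceding inequalities to the equality $v(x)=\tpsi^{\tilde\mR}(x)$, so $v\ge\tpsi$. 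Combining with Step 2, $\tpsi=v$ and $\tilde\mR$ is optimal.

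\emph{Main obstacle.} The two genuinely delicate points are Step 1 --- making the (admittedly informal) passage from the excursion-clock definition \eqref{eq:expPar_ruin}, with its i.i.d.\ clock sequence, to the killed functional fully rigorous, in a filtration enlarged to carry the clocks --- and the integrability bookkeeping in Steps 2--3 when going from local (sub/super)martingales to expectations, in particular controlling the stochastic integral $\int_0^{t}D^\mR_s v_x(\tX^\mR_s)\sqrt{\la\E(R_s^2(Y))}\,dB_s$ over unbounded excursions of $\tX^\mR$ and verifying that the contributions from $x\to\pm\infty$ drop out. Everything else is routine, which is why the authors state the theorem without proof.
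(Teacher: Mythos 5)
Your proof is correct and follows the standard It\^o/Feynman--Kac verification argument that the paper intentionally omits, pointing instead to the analogous Theorem 3.1 of Han, Liang, and Young (2019). The occupation-time reformulation of the exponential Parisian clock in Step 1, the local sub-/super-martingale bookkeeping in Steps 2--3, and the measurable feedback minimizer are exactly the ingredients such a verification requires, and the technical caveats you flag (rigorizing the enlarged filtration carrying the i.i.d.\ clocks, applying It\^o with no local-time term since $v\in\mC^1$ at $0$, nonexplosion under feedback) are the right ones and are indeed routine because the drift and volatility in \eqref{eq:Xtilde} are uniformly bounded over $0\le R\le Y$ given $\E Y^2<\infty$.
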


Note that the only dependence of the HJB equation in \eqref{eq:HJB_diff} upon the state variable $x$ is whether $x < 0$ or $x \ge 0$.  Based on previous work in goal-seeking problems with diffusion dynamics, we hypothesize that the value function is of the form
\begin{equation}\label{eq:hypoth}
v(x) =
\begin{cases}
\dfrac{\rho}{\rho + \bet} - \tbt e^{\tgamt x}, &\quad x < 0, \vspace{1.5ex} \\
\tbo e^{-\tgamo x}, &\quad x \ge 0,
\end{cases}
\end{equation}
for some positive constants $\tbo$, $\tbt$, $\tgamo$, and $\tgamt$.  If, for $x < 0$, $\tpsi$ is of the form given in \eqref{eq:hypoth}, then it is straightforward to show that the optimal retention strategy is full retention, that is, $\tR_t(y) = y$ for all $t \ge 0$, and $\tgamt$ equals the unique positive root of
\begin{equation}\label{eq:tgamt}
\dfrac{\la}{2} \, \E \big(Y^2 \big) \gam^2 + \big( c - \la \E Y \big) \gam - (\rho + \bet) = 0.
\end{equation}

If, for $x \ge 0$, $\tpsi$ is of the form given in \eqref{eq:hypoth}, then by adapting the proof of Theorem 3.1 of Li and Young \cite{LiY2019} or the proof of Lemma 3.1 of Liang, Liang, and Young \cite{LLY2019}, one can show that the optimal retention function is given by
\begin{equation}\label{eq:tR1}
\tR(y) = \dfrac{\tet + \eta y}{\eta + \tgamo} \wedge y,
\end{equation}
in which $\tgamo > 0$ uniquely solves
\begin{equation}\label{eq:tgamo}
\tet \E \tR  + \eta \E \big(Y\tR \big) - \dfrac{\eta + \gam}{2} \, \E \big(\tR^2 \big) = \dfrac{\kap \gam - \bet}{\la \gam} \, ,
\end{equation}
with $\kap$ given in \eqref{eq:kap}, or  equivalently,
\begin{equation}\label{eq:tgamo_2}
(c - \la \E Y) + \dfrac{\bet}{\gam} = \la \gam \int_0^\infty \left( \dfrac{\tet + \eta y}{\eta + \gam} \wedge y \right) S_Y(y) dy .
\end{equation}
It remains to obtain $\tbo$ and $\tbt$ via smooth pasting at $x = 0$, and we give that solution in the following theorem.

\begin{thm}\label{thm:diff}
The minimum discounted probability of exponential Parisian ruin under the diffusion approximation equals
\begin{equation}\label{eq:tpsi}
\tpsi(x) =
\begin{cases}
\dfrac{\rho}{\rho + \bet} \left(1 - \dfrac{\tgamo}{\tgamo + \tgamt} \, e^{\tgamt x} \right), &\quad x < 0, \vspace{1.5ex} \\
\dfrac{\rho}{\rho + \bet} \, \dfrac{\tgamt}{\tgamo + \tgamt} \, e^{- \tgamo x}, &\quad x \ge 0,
\end{cases}
\end{equation}
in which $\tgamt > 0$ is given by
\begin{equation}\label{eq:gam2}
\tgamt = \dfrac{1}{\la \E \big(Y^2 \big)} \left[ \sqrt{\big( c - \la \E Y \big)^2 + 2 (\rho + \bet) \la \E \big(Y^2 \big)} - \big( c - \la \E Y \big) \right],
\end{equation}
and $\tgamo$ solves \eqref{eq:tgamo_2}.  The corresponding optimal retention strategy $\tilde \mR$ is given in feedback form by $\tR_t(y) = \tR(X_t, y)$ in which
\begin{equation}\label{eq:tR}
\tR(x, y) =
\begin{cases}
y, &\quad x < 0, \vspace{1.5ex} \\
\dfrac{\tet + \eta y}{\eta + \tgamo} \wedge y, &\quad x \ge 0.
\end{cases}
\end{equation}
\end{thm}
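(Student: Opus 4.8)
The plan is to produce the candidate function $v$ of the form \eqref{eq:hypoth}, pin down its four parameters, and then check that $v$ satisfies every hypothesis of the verification theorem, Theorem~\ref{thm:verif_diff}; its conclusion then yields $\tpsi = v$ and the optimality of \eqref{eq:tR} at once. The discussion preceding Theorem~\ref{thm:diff} has already carried out the structural part: for $x<0$ the pointwise minimization inside \eqref{eq:HJB_diff}, i.e.\ minimizing $r \mapsto (\tet+\eta y)v_x\,r + \tfrac12(v_{xx}-\eta v_x)r^2$ over $r\in[0,y]$, is solved by full retention $r=y$, which forces $\tgamt$ to satisfy the quadratic \eqref{eq:tgamt}; for $x\ge0$ the same minimization produces the capped-linear retention \eqref{eq:tR1} and the relation \eqref{eq:tgamo}, equivalently \eqref{eq:tgamo_2}, defining $\tgamo$. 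I would first re-derive these two facts carefully, since they are the substance of the argument.

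First I would solve \eqref{eq:tgamt}: its leading coefficient $\tfrac{\la}{2}\E(Y^2)$ is positive and its constant term $-(\rho+\bet)$ is negative, so it has exactly one positive root, and the quadratic formula gives it in the closed form \eqref{eq:gam2}. Then I would verify that \eqref{eq:tgamo_2} has a unique positive root $\tgamo$: the right-hand side $\gam\mapsto \la\gam\int_0^\infty\big(\tfrac{\tet+\eta y}{\eta+\gam}\wedge y\big)S_Y(y)\,dy$ is continuous and strictly increasing in $\gam$, while the left-hand side $(c-\la\E Y)+\bet/\gam$ is strictly decreasing; comparing the behaviour of the two sides as $\gam\to0^+$ and as $\gam\to\infty$ (using assumption \eqref{eq:c_not_huge} at the upper end) shows the curves cross exactly once. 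Alternatively, this is the content of the cited adaptations of Li and Young \cite{LiY2019} and Liang, Liang, and Young \cite{LLY2019}. Next I would impose smooth pasting at $x=0$ on \eqref{eq:hypoth}: continuity of $v$ gives $\tfrac{\rho}{\rho+\bet}-\tbt = \tbo$, and continuity of $v_x$ gives $\tbt\tgamt = \tbo\tgamo$; solving this $2\times2$ linear system yields $\tbt = \tfrac{\rho}{\rho+\bet}\cdot\tfrac{\tgamo}{\tgamo+\tgamt}$ and $\tbo = \tfrac{\rho}{\rho+\bet}\cdot\tfrac{\tgamt}{\tgamo+\tgamt}$, which is precisely the function displayed in \eqref{eq:tpsi}.

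Finally I would verify the four hypotheses of Theorem~\ref{thm:verif_diff} for this $v$. (i) $v$ is $C^2$ away from $0$ by inspection, and the smooth pasting makes it $C^1$ at $0$ with distinct one-sided second derivatives $-\tbt\tgamt^2$ and $\tbo\tgamo^2$ (the jump being consistent with the indicator $\mathbf{1}_{\{x<0\}}$ in the HJB); $v$ is strictly decreasing because each branch is and $0<\tbo,\tbt$; and $0\le v\le \rho/(\rho+\bet)$ because $0<\tfrac{\tgamo}{\tgamo+\tgamt}e^{\tgamt x}<1$ for $x<0$ and $0<v(x)\le\tfrac{\rho}{\rho+\bet}\tfrac{\tgamt}{\tgamo+\tgamt}$ for $x\ge0$. (ii)--(iii) the two limits are immediate from \eqref{eq:tpsi}. (iv) On $x<0$, substituting $v$, the identity $v_{xx}=\tgamt v_x$, and $R(y)=y$ into \eqref{eq:HJB_diff}, and reducing $-\kap+\la\tet\E Y+\tfrac{\la\eta}{2}\E(Y^2)=c-\la\E Y$ via the definition \eqref{eq:kap} of $\kap$, the equation collapses to exactly \eqref{eq:tgamt}, which $\tgamt$ solves; on $x\ge0$, substituting $v$, the identity $v_{xx}=-\tgamo v_x$, and $R=\tR$ from \eqref{eq:tR1}, the equation collapses to \eqref{eq:tgamo}, equivalently \eqref{eq:tgamo_2}, which $\tgamo$ solves. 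Hence all hypotheses hold, and Theorem~\ref{thm:verif_diff} gives $\tpsi = v$ together with the feedback-optimal retention \eqref{eq:tR}.

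I expect the \textbf{main obstacle} to be the pointwise minimization over retention functions in \eqref{eq:HJB_diff}. On $\{x<0\}$ one must check that full retention is optimal irrespective of the sign of $\tgamt-\eta$: when $\tgamt\ge\eta$ the integrand is concave in $r$ and thus minimized at an endpoint, with $r=y$ giving a negative value and $r=0$ giving $0$; when $\tgamt<\eta$ it is convex with unconstrained minimizer $\tfrac{\tet+\eta y}{\eta-\tgamt}\ge y$, so again the minimum on $[0,y]$ is at $r=y$. On $\{x\ge0\}$, where $v_{xx}-\eta v_x=-(\tgamo+\eta)v_x>0$, the convex integrand is minimized at the interior point $\tfrac{\tet+\eta y}{\eta+\tgamo}$ capped by $y$, recovering \eqref{eq:tR1}. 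Once these optimizers are in hand, the remaining work — the quadratic formula, the $2\times2$ smooth-pasting system, and the substitutions that verify the HJB — is entirely routine.
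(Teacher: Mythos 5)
Your proposal is correct and takes essentially the same approach as the paper: fix the ansatz \eqref{eq:hypoth}, carry out the pointwise minimization of the HJB quadratic in $r$ on $\{x<0\}$ and $\{x\ge 0\}$ to obtain $\tR$ and the defining equations for $\tgamt$ and $\tgamo$, solve the $2\times 2$ smooth-pasting system at $x=0$ for $\tbo,\tbt$, and then invoke Theorem~\ref{thm:verif_diff}. The paper's own proof is a one-line citation of the verification theorem with the derivation delegated to the preceding discussion, so you have simply supplied the computations the paper calls ``straightforward,'' and your sign analysis (splitting $\tgamt\ge\eta$ from $\tgamt<\eta$ on $\{x<0\}$, and noting $v_{xx}-\eta v_x>0$ on $\{x\ge0\}$) is precisely what is needed to justify the displayed minimizers.
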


\begin{proof}
It is straightforward to show that $\tpsi$ in \eqref{eq:tpsi} satisfies the conditions of Theorem \ref{thm:verif_diff}; thus, it equals the minimum discounted probability of exponential Parisian ruin.  Moreover, $\tR$ in \eqref{eq:tR} is the minimizer of the HJB equation in \eqref{eq:HJB_diff}, which implies that it determines the optimal retention strategy.
\end{proof}

\begin{remark}
Theorem {\rm \ref{thm:diff}} shows that the insurer retains {\rm all} of its risk when the surplus is negative.  What drives this result is that the insurer is essentially maximizing its drift and volatility in order to make the surplus positive as soon as possible, in order to avoid exponential Parisian ruin.  \qed
\end{remark}

\begin{remark}
If we set $\eta = 0$, then the reinsurance premium principle reduces to the expected-value premium principle, and excess-of-loss reinsurance is optimal when $x \ge 0;$ see \eqref{eq:tR}. Similarly, if we set $\tet = 0$, then the premium principle reduces to the variance premium principle, and proportional reinsurance is optimal when $x \ge 0$.  \qed
 \end{remark}

\begin{remark}
As in Liang, Liang, and Young {\rm \cite{LLY2019}}, one can show that $\tgamo$ equals the analog of the maximum adjustment coefficient for the diffusion process when considering {\rm discounted} probability of ruin for $x \ge 0$.  When $x < 0$, $\tgamt$ equals the analog of the adjustment coefficient for $\tR(y) = y$, but $\tgamt$ is not the maximum because if $\tpsi(x)$ is of the form $1 - \tbt e^{-\tgamt x}$, then the optimal $\tR(y) = y$, and \eqref{eq:tgamt} then determines $\tgamt$.   \qed
\end{remark}

We end this section with a straightforward corollary of Theorem \ref{thm:diff}.

\begin{cor}\label{cor:diff_bet0}
If we set $\bet$ equal to $0$ in Theorem $\ref{thm:diff}$, then $\tpsi$ in \eqref{eq:tpsi} equals the minimum probability of exponential Parisian ruin under the diffusion approximation.  \qed
\end{cor}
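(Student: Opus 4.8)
The plan is to reduce the corollary to the verification machinery of Theorem~\ref{thm:verif_diff} together with the explicit computations behind Theorem~\ref{thm:diff}, specialized to $\bet = 0$. First, observe that when $\bet = 0$ the objective functional in \eqref{eq:psi} collapses, since $e^{-\bet \tilde K}\,{\bf 1}_{\{\tilde K < \infty\}} = {\bf 1}_{\{\tilde K < \infty\}}$, so
\[
\inf_{\mR \in \fR}\E^x\left(e^{-\bet \tilde K}{\bf 1}_{\{\tilde K < \infty\}}\right) = \inf_{\mR \in \fR}\P^x\left(\tilde K < \infty\right),
\]
which is exactly the minimum probability of exponential Parisian ruin under the diffusion approximation. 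Hence it suffices to show that $\tpsi$ given by \eqref{eq:tpsi} with $\rho/(\rho+\bet)$ replaced by $1$ equals this quantity.

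Next, I would note that Theorem~\ref{thm:verif_diff} remains valid with $\bet = 0$. Its proof is the standard It\^o/Dynkin verification argument (cf.\ the proof of Theorem~3.1 in \cite{HLY2019}), which is precisely the standard verification for an \emph{undiscounted} ruin probability: the candidate value function is still bounded, now in $\left[0,\,1\right]$ since $\rho/(\rho+0)=1$; the effective killing rate $\bet + \rho\,{\bf 1}_{\{x<0\}}$ simply becomes $\rho\,{\bf 1}_{\{x<0\}}$, which already vanishes on $\{x\ge 0\}$ even when $\bet>0$, so the structure of the argument is unchanged; and the boundary conditions at $\pm\infty$ play the same role. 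This yields the $\bet = 0$ analogue of Theorem~\ref{thm:verif_diff}: any decreasing $v : \R \to [0,1]$ that lies in $\mC^2(\R)$ except at $0$, where it is $\mC^1$ with left- and right-second derivatives, satisfies $\lim_{x \to -\infty} v(x) = 1$ and $\lim_{x\to\infty}v(x) = 0$, and solves \eqref{eq:HJB_diff} with $\bet = 0$, must equal the minimum probability of exponential Parisian ruin, with the optimal retention strategy given by the arg min of \eqref{eq:HJB_diff}.

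It then remains to check that $\tpsi$ of \eqref{eq:tpsi} with $\bet = 0$ — taking $\tgamt$ to be the unique positive root of \eqref{eq:tgamt} at $\bet=0$ (equivalently \eqref{eq:gam2} at $\bet=0$), and $\tgamo$ to solve \eqref{eq:tgamo_2} at $\bet=0$ — satisfies these four conditions. This is just the $\bet = 0$ specialization of the computations already performed in Section~\ref{sec:diff}: monotonicity and the two limits follow from $\tgamo,\tgamt>0$; continuity at $0$ holds since both branches equal $\tgamt/(\tgamo+\tgamt)$ there, and the $\mC^1$-property follows from the identity $\tgamt\cdot\frac{\tgamo}{\tgamo+\tgamt} = \tgamo\cdot\frac{\tgamt}{\tgamo+\tgamt}$ (which is exactly the smooth-pasting condition used to derive \eqref{eq:tpsi}); and substituting the exponential ansatz into \eqref{eq:HJB_diff} with $\bet = 0$ reproduces, for $x<0$, the equation $\frac{\la}{2}\E(Y^2)\tgamt^2 + (c-\la\E Y)\tgamt - \rho = 0$ with full retention $\tR(y)=y$ optimal, and, for $x\ge0$, the optimality of $\tR(y)=\frac{\tet+\eta y}{\eta+\tgamo}\wedge y$ together with \eqref{eq:tgamo_2} at $\bet=0$, all exactly as in the proof of Theorem~\ref{thm:diff}. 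Applying the $\bet=0$ version of Theorem~\ref{thm:verif_diff} then gives the claim.

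The only point requiring genuine care is the assertion that the verification argument of Theorem~\ref{thm:verif_diff} carries over to $\bet = 0$; since removing the $\bet$-term only deletes a nonnegative discount factor and leaves the killing term $\rho\,{\bf 1}_{\{x<0\}}$ and the boundedness of the candidate intact, this is routine. One could alternatively pass to the limit $\bet \downarrow 0$ directly in \eqref{eq:tpsi} and in the stochastic representation, using continuity of $\tgamo$, $\tgamt$, and $\rho/(\rho+\bet)$ in $\bet$, but that route additionally requires justifying the interchange of $\inf_{\mR}$ with $\lim_{\bet\downarrow 0}$, which the verification approach sidesteps.
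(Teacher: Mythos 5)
Your proposal is correct and follows exactly the route the paper intends: the corollary is stated without proof precisely because it is the $\bet = 0$ specialization of Theorem~\ref{thm:verif_diff} and Theorem~\ref{thm:diff}, and your verification that the killing term, boundedness, smooth pasting, and HJB computations all carry over unchanged when $\bet = 0$ is the implicit content the authors describe as ``straightforward.'' No gap; you have simply filled in the details the paper elected to omit.
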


\section{Discounted exponential Parisian ruin: classical risk model}\label{sec:CPP}

For the classical risk model, as opposed to the diffusion model, we cannot find an explicit expression for the minimum discounted probability of exponential Parisian ruin.  In this section, we apply stochastic Perron's method, created by Bayraktar and S\^irbu \cite{BS2013}, to  prove that the value function $\psi$ is the unique continuous viscosity solution of its HJB equation with appropriate boundary conditions.  Define the operator $F$ via its action on appropriately differentiable functions $u$, $v$, and $w$ as follows:
\begin{align}
\label{oper:F}
F \big(x, u(x), v_x(x), w(\cdot) \big)
& = \bet u(x) + \rho \big(u(x) - 1 \big) {\bf 1}_{\{x < 0\}} + \kap v_x(x) \notag \\
& \quad - \la \inf_{R} \left[ \left( (1 + \tet) \E R + \eta \E(YR) - \dfrac{\eta}{2} \, \E \big(R^2 \big) \right) v_x(x) + \textcolor[rgb]{0.00,0.00,0.00}{\E w(x - R) - w(x)} \right],
\end{align}
in which we take the infimum over retention functions $R$ such that $0 \le R(y) \le y$ for all $y \in \R^+$.  Then, the HJB equation for our problem is
\begin{align}
\label{HJB_F}
F\big(x, v(x), v_x(x), v(\cdot) \big) = 0,
\end{align}
with boundary conditions
\begin{align}
\label{boundary_condition}
\lim \limits_{x \to -\infty} v(x) = \dfrac{\rho}{\rho + \bet} \, , \qquad \qquad \lim \limits_{x \to \infty} v(x) = 0.
\end{align}
Note that $F$ is a discontinuous operator at $x = 0$.  

We, next, define viscosity sub- and supersolutions for our problem, taking into account $F$'s discontinuity at $x = 0$.

\begin{defin}\label{def:strict_glob}
We say an upper semi-continuous $($u.s.c.$)$ function $\underline{u}: \R \to \left[ 0, \, \frac{\rho}{\rho + \bet} \right]$ is a {\rm viscosity subsolution} of \eqref{HJB_F} and \eqref{boundary_condition} if
\[
\lim \limits_{x \to -\infty} \underline{u}(x) = \dfrac{\rho}{\rho + \bet} \, , \qquad \qquad \lim \limits_{x \to \infty} \underline{u}(x) = 0,
\]
and if, for any $x_0 \in \R$ and for any $\var \in \mC^1(\R)$ such that $\underline{u} - \var$ reaches a strict, global maximum of zero at $x_0$, we have
\begin{equation}\label{eq:vis_sub}
\begin{cases}
F\big(x_0, \var(x_0), \var_x(x_0), \var(\cdot)\big) \le 0, &\quad x_0 \ne 0, \\
\lim \limits_{x \to 0-} F\big(x, \var(x), \var_x(x), \var(\cdot) \big) \le 0, &\quad x_0 = 0.
\end{cases}
\end{equation}
Similarly, we say a lower semi-continuous $($l.s.c.$)$ function $\bar{u} : \R \to \left[ 0, \, \frac{\rho}{\rho + \bet} \right]$ is a {\rm viscosity supersolution} of \eqref{HJB_F} and \eqref{boundary_condition} if
\[
\lim \limits_{x \to -\infty} \bar{u}(x) = \dfrac{\rho}{\rho + \bet} \, , \qquad \qquad \lim \limits_{x \to \infty} \bar{u}(x) = 0,
\]
and if, for any $x_0 \in \R $ and for any $\phi \in \mC^1(\R)$ such that $\bar{u} - \phi$ reaches a strict, global minimum of zero at $x_0$, we have
\begin{equation}\label{eq:vis_super}
F\big(x_0, \phi(x_0), \phi_x(x_0), \phi(\cdot)\big) \ge 0.
\end{equation}
Finally, a function $u$ is called a $($continuous$)$ {\rm viscosity solution} of \eqref{HJB_F} and \eqref{boundary_condition} if it is both a viscosity subsolution and a viscosity supersolution of \eqref{HJB_F} and \eqref{boundary_condition}.  \qed
\end{defin}

\begin{remark}
Because $\var \in \mC^1(\R)$, the only discontinuity in $F$ in \eqref{eq:vis_sub} at $x = 0$ arises from the term $\rho(\var(x) - 1) {\bf 1}_{\{x < 0 \}}$.  Because we restrict viscosity sub- and supersolutions to take values in $\left[ 0, \, \frac{\rho}{\rho + \bet} \right]$, $F$ restricted to such functions is upper semi-continuous, and by taking the left limit in \eqref{eq:vis_sub} when $x_0 = 0$, we are essentially using the l.s.c.\ envelope of $F$.  See Ishii {\rm \cite{I1985}} for early work and Barles and Chasseigne {\rm \cite{BC2018}} for more recent work on viscosity solutions of discontinuous Hamiltonians.

We could allow non-strict equality in the boundary conditions, that is, $\lim \limits_{x \to -\infty} \underline{u}(x) \le \frac{\rho}{\rho + \bet}$, $\lim \limits_{x \to \infty} \underline{u}(x) \le 0$, $\lim \limits_{x \to -\infty} \bar{u}(x) \ge \frac{\rho}{\rho + \bet}$, and $\lim \limits_{x \to \infty} \bar{u}(x) \ge 0$, but it is useful in what follows to require strict equality.  \qed
\end{remark}

We use stochastic Perron's method, introduced by Bayraktar and S\^irbu \cite{BS2013}, to construct a solution of the HJB equation and, then, use a comparison lemma to verify that this solution equals the value function.  The main arguments are as follows:  First, we bound the value function from below and above by {\it stochastic} sub- and supersolutions (as defined in Sections \ref{sec:41} and \ref{sec:42}):
\begin{align}
\label{eq:u_vs_v}
u \le \psi \le v.
\end{align}
Let ${\Psi^{-}}$ and ${\Psi^{+}}$ denote the sets of stochastic sub- and supersolutions, respectively.  Define $u_{-}$ and $v_{+}$ on $\R$ by
\[
u_{-}(x) = \sup_{u \in \Psi^{-}} u(x), \qquad \qquad v_{+}(x) = \inf_{v \in \Psi^{+}} v(x).
\]
From \eqref{eq:u_vs_v}, we deduce
\begin{align*}
u_{-} \le \psi \le v_{+}.
\end{align*}
Second, we prove that $u_{-}$ is a viscosity supersolution and $v_{+}$ is a viscosity subsolution of \eqref{HJB_F} and \eqref{boundary_condition}.  Third, a comparison result for viscosity sub- and supersolutions implies the reverse inequality, namely,
\[
u_{-} \ge v_{+}.
\]
Thus, we conclude that $\psi (= u_{-} = v_{+})$ is the unique (continuous) viscosity solution of the HJB equation satisfying the boundary conditions in \eqref{boundary_condition}.  In the next subsection, we take a small diversion to consider the analog of the adjustment coefficient; then, in the subsequent three subsections, we work through the details of this outline.

\subsection{Analog of the adjustment coefficient}\label{sec:adj_coeff}

To find the analog of the adjustment coefficient, we split the problem according to whether $x \ge 0$ and $x < 0$. We begin with $x \ge 0$ because that is familiar to most readers.  A formal way of obtaining the adjustment coefficient is to substitute $\bo e^{-\gamo x}$ for $v$ in \eqref{HJB_F}, including when $x - R < 0$.  By doing so, we obtain the following equation for the (maximum) adjustment coefficient $\gamo > 0$:\footnote{It is not completely obvious that \eqref{eq:HJB_gamo} defines the {\it maximum} adjustment coefficient.  See Liang, Liang, and Young \cite{LLY2019} for the proof of this statement.}
\begin{equation}\label{eq:HJB_gamo}
0 = \bet - \kap \gamo + \la \sup \limits_{R} \left[ \left( (1 + \tet)\E R + \eta \E (YR) - \dfrac{\eta}{2} \, \E \big( R^2 \big) \right) \gamo - \big( M_R(\gamo) - 1 \big) \right],
\end{equation}
in which $M_R$ is the moment generating function of $R = R(Y)$.  Equation \eqref{eq:HJB_gamo} is similar to the equation for the maximum adjustment coefficient $\rho_J$ in Section 4.1 in Liang, Liang, and Young \cite{LLY2019}, and Theorem 4.1 in that paper gives us the optimal retention strategy corresponding to the maximum adjustment coefficient.  For completeness and future reference, we restate that proposition here, modified appropriately for our problem.

\begin{prop}\label{prop:gamo}
When $x \ge 0$, the maximum adjustment coefficient $\gamo > 0$ defined by \eqref{eq:HJB_gamo} uniquely solves
\begin{equation}\label{eq:gamo}
c + \dfrac{\bet}{\gam} = \la \int_0^\infty e^{\gam \hR(y; \gam)} S_Y(y) dy,
\end{equation}
and the corresponding optimal retention function $\hR$ is given by
\begin{equation}\label{eq:hR}
\hR(y; \gam) =
\begin{cases}
y, &\quad 0 \le y < \dfrac{1}{\gam} \ln(1 + \tet), \vspace{1ex} \\
R_c(y; \gam), &\quad y \ge \dfrac{1}{\gam} \ln(1 + \tet),
\end{cases}
\end{equation}
in which $R_c(y; \gam) \in [0, y]$ uniquely solves
\begin{equation}\label{eq:Rc}
(1 + \tet) + \eta y - \eta R - e^{\gam R} = 0,
\end{equation}
for $y \ge \frac{1}{\gam} \ln(1 + \tet)$ and for any $\gam > 0$.  \qed
\end{prop}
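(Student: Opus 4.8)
The plan is to adapt the proof of Theorem 4.1 in Liang, Liang, and Young \cite{LLY2019} to the present discount rate $\bet$, in three parts: a pointwise optimization producing $\hR$, an integration-by-parts reduction of \eqref{eq:HJB_gamo} to \eqref{eq:gamo}, and a monotonicity argument for the existence and uniqueness of a positive root. First, fix $\gam > 0$. Because the objective in \eqref{eq:HJB_gamo} is an expectation, the supremum decouples over the realization $Y = y$, so it suffices to maximize
\[
h(R) = \gam\Big[(1 + \tet) R + \eta y R - \tfrac{\eta}{2} R^2\Big] - e^{\gam R}, \qquad R \in [0, y],
\]
for each $y > 0$, where I have written $M_R(\gam) = \E e^{\gam R(Y)}$. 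Since $h''(R) = -\gam(\eta + \gam e^{\gam R}) < 0$, $h$ is strictly concave; moreover $h'(0) = \gam(\tet + \eta y)$, which is strictly positive for $y > 0$ because \eqref{eq:c_not_huge} and $c > \la \E Y$ force $\tet > 0$ when $\eta = 0$, and $h'(y) = \gam\big((1 + \tet) - e^{\gam y}\big)$ changes sign at $y = \tfrac{1}{\gam}\ln(1 + \tet)$. Hence the maximizer is the corner $R = y$ when $y \le \tfrac{1}{\gam}\ln(1 + \tet)$ and the unique interior critical point $R_c(y; \gam) \in (0, y)$ solving $h'(R) = 0$ (equivalently \eqref{eq:Rc}) when $y > \tfrac{1}{\gam}\ln(1 + \tet)$; this is \eqref{eq:hR}, and $R_c$ is well defined because $h'$ is strictly decreasing from a positive to a negative value.

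Second, I substitute $\hR = \hR(\cdot\,; \gam)$ back into \eqref{eq:HJB_gamo}. Writing $D(y) = y - \hR(y; \gam)$ for the ceded loss and using \eqref{eq:kap} to replace $(1 + \tet)\E\hR + \eta\E(Y\hR) - \tfrac{\eta}{2}\E(\hR^2)$ by $(1 + \tet)\E Y + \tfrac{\eta}{2}\E(Y^2) - (1 + \tet)\E D - \tfrac{\eta}{2}\E(D^2)$, equation \eqref{eq:HJB_gamo} rearranges to
\[
c + \tfrac{\bet}{\gam} = \la\Big[(1 + \tet)\E D + \tfrac{\eta}{2}\E(D^2) + \tfrac{1}{\gam}\big(\E e^{\gam\hR} - 1\big)\Big].
\]
Applying the identity $\E g(Y) = g(0) + \int_0^\infty g'(y) S_Y(y)\,dy$ with $g = D$, $g = D^2$, and $g = e^{\gam\hR}$ (noting $D(0) = 0$ and $e^{\gam\hR(0)} = 1$), the bracketed expression becomes $\int_0^\infty \big[D'(y)\big((1 + \tet) + \eta D(y)\big) + \hR'(y) e^{\gam\hR(y)}\big] S_Y(y)\,dy$. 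On $\{D > 0\}$ the first-order condition \eqref{eq:Rc} gives $(1 + \tet) + \eta D(y) = e^{\gam\hR(y)}$, whereas on $\{D = 0\}$ one has $D' = 0$; in either case the integrand equals $\big(D'(y) + \hR'(y)\big) e^{\gam\hR(y)} = e^{\gam\hR(y)}$, which is precisely \eqref{eq:gamo}.

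Third, I prove that \eqref{eq:gamo} has a unique positive root by comparing the monotonicities of its two sides. The left-hand side $c + \bet/\gam$ is continuous and strictly decreasing from $+\infty$ to $c$ on $(0, \infty)$. For the right-hand side, \eqref{eq:Rc} yields the uniform bound $e^{\gam\hR(y; \gam)} = (1 + \tet) + \eta D(y) \le (1 + \tet) + \eta y$, and $\int_0^\infty\big((1 + \tet) + \eta y\big) S_Y(y)\,dy = (1 + \tet)\E Y + \tfrac{\eta}{2}\E(Y^2) < \infty$, so dominated convergence gives $\la\int_0^\infty e^{\gam\hR}S_Y\,dy \to \la\E Y$ as $\gam \to 0^+$ and, since $R_c(y;\gam) \to 0$ as $\gam \to \infty$, $\la\int_0^\infty e^{\gam\hR}S_Y\,dy \to (1 + \tet)\la\E Y + \tfrac{\eta}{2}\la\E(Y^2) = \kap + c$ as $\gam \to \infty$. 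Implicit differentiation of \eqref{eq:Rc} in $\gam$ shows $\hR(y; \gam)$ is decreasing in $\gam$, so $e^{\gam\hR(y;\gam)} = (1 + \tet) + \eta D(y)$ is non-decreasing in $\gam$, and hence the right-hand side of \eqref{eq:gamo} is continuous and non-decreasing, running from $\la\E Y$ up to $\kap + c > c$. Therefore the difference of the two sides is continuous and strictly decreasing, positive near $0$ and negative near $\infty$, so it vanishes at exactly one point, which is $\gamo$.

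The main obstacle is the second step: the collapse of \eqref{eq:HJB_gamo} into the compact form \eqref{eq:gamo} is not a mechanical calculation, since it depends on choosing the integration-by-parts representation so that the first-order condition \eqref{eq:Rc} can be invoked to merge two integrals into one. The bookkeeping with the ceded loss $D$ and the threshold $\tfrac{1}{\gam}\ln(1 + \tet)$ must also be handled with care, because on $\{D = 0\}$ the relation \eqref{eq:Rc} holds only as an inequality and one must instead use $D' = 0$. By contrast, the monotonicity and domination claims in the third step are comparatively routine, the crucial simplification being the uniform bound $e^{\gam\hR(y; \gam)} \le (1 + \tet) + \eta y$.
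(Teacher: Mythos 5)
Your proof is correct. Note that the paper itself does not supply a proof of this proposition: it simply restates Theorem~4.1 of Liang, Liang, and Young \cite{LLY2019}, ``modified appropriately,'' and ends with a \qed. Your three-step argument — pointwise concave maximization to obtain $\hR$, integration by parts combined with the first-order condition \eqref{eq:Rc} to collapse \eqref{eq:HJB_gamo} into \eqref{eq:gamo}, and monotone comparison of the two sides using the bound $e^{\gam\hR(y;\gam)} \le (1+\tet)+\eta y$ — supplies exactly the content the paper delegated to the cited reference, and the adaptation to the discount rate $\bet$ (the extra term $\bet/\gam$ on the left-hand side, which only strengthens the strict monotonicity used for uniqueness) is handled correctly.
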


\medskip

Next, for $x < 0$, formally substitute $\frac{\rho}{\rho + \bet} - \bt e^{\gamt x}$ for $v$ in \eqref{HJB_F} to obtain the following equation for $\gamt > 0$:
\begin{equation}\label{eq:HJB_gamt}
\rho + \bet = - \kap \gamt + \la \sup \limits_{R} \left[ \left( (1 + \tet)\E R + \eta \E (YR) - \dfrac{\eta}{2} \, \E \big( R^2 \big) \right) \gamt - \big( 1 -  M_R(-\gamt) \big) \right].
\end{equation}
In the following proposition, we prove that the optimal retention function in \eqref{eq:HJB_gamt} equals full retention, which nicely matches the result we obtained for the diffusion approximation when $x < 0$.  Recall that admissible retention functions $R$ are restricted so that $0 \le R(y) \le y$.

\begin{prop}\label{prop:gamt}
When $x < 0$, the adjustment coefficient $\gamt > 0$ defined by \eqref{eq:HJB_gamt} uniquely solves
\begin{equation}\label{eq:gamt}
\rho + \bet = c \gamt - \la \big( 1 - M_Y( - \gamt) \big),
\end{equation}
and the corresponding optimal retention function is given by $\hR(y) = y$.
\end{prop}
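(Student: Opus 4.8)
The plan is to reduce the supremum in \eqref{eq:HJB_gamt} to a pointwise optimization over the retained amount, to show that the pointwise maximizer is $R=y$ uniformly in $\gamt$, to substitute back to obtain \eqref{eq:gamt}, and finally to prove that \eqref{eq:gamt} has a unique positive root. First I would fix $\gamt > 0$ and observe that every term inside the supremum in \eqref{eq:HJB_gamt} is an expectation of a function of $Y$ and $R(Y)$: $\E R = \int_0^\infty R(y)\,dF_Y(y)$, $\E(YR) = \int_0^\infty y R(y)\,dF_Y(y)$, $\E(R^2) = \int_0^\infty R(y)^2\,dF_Y(y)$, and $M_R(-\gamt) = \E\,e^{-\gamt R(Y)} = \int_0^\infty e^{-\gamt R(y)}\,dF_Y(y)$. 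Hence the supremum over admissible retention functions (those with $0 \le R(y) \le y$) is attained by choosing, for $F_Y$-a.e.\ $y$, the value $R \in [0,y]$ that maximizes
\[
h(R;y) := \gamt\big((1+\tet) + \eta y\big)R - \frac{\gamt\eta}{2}\,R^2 + e^{-\gamt R},
\]
the additive constant $-1$ in \eqref{eq:HJB_gamt} being irrelevant to the optimization.

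Next I would differentiate: $h_R(R;y) = \gamt\big[(1+\tet) + \eta y - \eta R - e^{-\gamt R}\big]$. For $R \in [0,y]$ one has $\eta(y-R) \ge 0$ and $e^{-\gamt R} \le 1$, so $h_R(R;y) \ge \gamt\tet \ge 0$; therefore $h(\cdot\,;y)$ is nondecreasing on $[0,y]$ and its maximum is attained at $R = y$, and this holds for \emph{every} $\gamt > 0$. Consequently the pointwise maximizer is $\hR(y) = y$, independently of $\gamt$, so equation \eqref{eq:HJB_gamt} — which is implicit because its right-hand side depends on $\gamt$ — is equivalent to the equation obtained by substituting $R(y) \equiv y$. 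With full retention, $\E R = \E Y$, $\E(YR) = \E(R^2) = \E(Y^2)$, and $M_R(-\gamt) = M_Y(-\gamt)$, so \eqref{eq:HJB_gamt} becomes
\[
\rho + \bet = -\kap\gamt + \la\left((1+\tet)\E Y + \frac{\eta}{2}\,\E\big(Y^2\big)\right)\gamt - \la\big(1 - M_Y(-\gamt)\big);
\]
substituting $\kap = (1+\tet)\la\E Y + \frac{\eta}{2}\,\la\E(Y^2) - c$ from \eqref{eq:kap} collapses the first two terms on the right to $c\gamt$, which yields \eqref{eq:gamt}.

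Finally I would establish existence and uniqueness of a positive solution of \eqref{eq:gamt}. Define $\Phi(\gam) := c\gam - \la\big(1 - M_Y(-\gam)\big) - (\rho + \bet)$ for $\gam \ge 0$, noting that $M_Y(-\gam) = \E\,e^{-\gam Y} \in (0,1]$ is finite because $Y > 0$ a.s. Then $\Phi(0) = -(\rho+\bet) < 0$; differentiating under the integral sign (the integrand is dominated), $\Phi'(\gam) = c - \la\,\E\big(Y e^{-\gam Y}\big) \ge c - \la\E Y > 0$ by the standing assumption $c > \la\E Y$, so $\Phi$ is strictly increasing; and $M_Y(-\gam) \to 0$ as $\gam \to \infty$ by monotone convergence (again because $Y > 0$ a.s.), so $\Phi(\gam) \to \infty$. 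Hence $\Phi$ has exactly one zero $\gamt \in (0,\infty)$, and $\hR(y) = y$ is the corresponding optimal retention function. The only genuine subtlety is that the supremum in \eqref{eq:HJB_gamt} itself depends on $\gamt$; this is handled by the sign inequality $h_R \ge 0$, which shows full retention is optimal uniformly in $\gamt > 0$ and so lets the implicit equation reduce cleanly to the explicit one \eqref{eq:gamt}. The remaining ingredients — interchanging the supremum with the integral via pointwise optimization, differentiating $M_Y(-\cdot)$ under the integral, and the monotonicity argument for uniqueness — are routine.
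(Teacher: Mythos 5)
Your proof is correct and takes essentially the same approach as the paper: reduce the supremum to a pointwise maximization of the integrand $\gamt\big((1+\tet)+\eta y\big)R - \tfrac{\gamt\eta}{2}R^2 + e^{-\gamt R}$ over $R \in [0,y]$, observe that its $R$-derivative $\gamt\big[\tet + \eta(y-R) + (1 - e^{-\gamt R})\big]$ is nonnegative on $[0,y]$ so full retention is optimal uniformly in $\gamt$, substitute $R=Y$ and simplify with \eqref{eq:kap} to get \eqref{eq:gamt}. Your uniqueness argument via $\Phi(\gam) = c\gam - \la(1-M_Y(-\gam)) - (\rho+\bet)$, using $\Phi(0)<0$, $\Phi' \ge c - \la\E Y > 0$, and $\Phi \to \infty$, cleanly fills in the step the paper calls ``straightforward.''
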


\begin{proof}
Define the function $k$ by the expression in the square brackets of \eqref{eq:HJB_gamt}, ignoring the constant term $- 1$.  Specifically,
\begin{align}\label{eq:k}
k(R; \gam)&= \left((1 + \tet) \E R + \eta \E(Y R) - \frac{\eta}{2} \, \E\big(R^2 \big) \right) \gam  + M_R(-\gam) \notag  \\
&= \int_0^\infty \left[ \left((1 + \tet) R(y) + \eta y R(y) - \frac{\eta}{2} \, R^2(y) \right) \gam + e^{-\gam R(y)} \right] dF_Y(y).
\end{align}
For a given value of $\gam > 0$, we wish to find $\hR(y; \gam)$ that maximizes $k$.  Consider the integrand in the second line of the expression for $k$, namely,
\[
\ell(R) = \left((1 + \tet) R + \eta y R - \frac{\eta}{2} \, R^2 \right) \gam  + e^{-\gam R}.
\]
By differentiating with respect to $R$, we obtain
\[
\ell_R(R) = \big((1 + \tet) + \eta y - \eta R \big) \gam  - \gam e^{-\gam R},
\]
which is positive for all $R \in [0, y]$; thus, the optimal retention function $\hR$ is given by $\hR(y) = y$.

By substituting $\hR(y) = y$ for $R$ in \eqref{eq:HJB_gamt}, we obtain \eqref{eq:gamt}. It is straightforward to show that \eqref{eq:gamt} has a unique positive solution $\gamt$.
\end{proof}

\begin{remark}
Compare \eqref{eq:gamt} with equation $(4)$ in dos Reis {\rm \cite{dR1993}}. When $\bet = 0$, the two equations are identical with $\gamt = -f(-\rho)$, in which $f$ is given by the latter equation, just as $\gamt$ is given by \eqref{eq:gamt}.  Also, by performing integration by parts, we can rewrite \eqref{eq:gamt} to obtain an equation that is parallel to \eqref{eq:gamo}$:$
\[
\qquad \qquad \qquad \qquad \qquad \qquad \qquad c \, - \, \dfrac{\rho + \bet}{\gamt} = \la \int_0^\infty e^{-\gamt y} S_Y(y) dy. \qquad \qquad \qquad \qquad \qquad \qquad \qed
\]
\end{remark}

By analogy with the expression for $\tpsi$ in \eqref{eq:tpsi}, define $\opsi$ on $\R$ by
\begin{equation}\label{eq:opsi}
\opsi(x) =
\begin{cases}
\dfrac{\rho}{\rho + \bet} \left( 1 - \dfrac{\gamo}{\gamo + \gamt} \, e^{\gamt x} \right), &\quad x < 0, \vspace{1.5ex} \\
\dfrac{\rho}{\rho + \bet} \, \dfrac{\gamt}{\gamo + \gamt} \, e^{- \gamo x}, &\quad x \ge 0.
\end{cases}
\end{equation}
In the next section, we show that $\opsi$ is a stochastic supersolution of our problem.

\subsection{Stochastic supersolution}\label{sec:41}

To apply stochastic Perron's method, we first redefine the stochastic control problem using a stronger formulation.  To that end, let $0 \le \tau \le \omega \le K$ be stopping times.  Recall that $K$ is the time of exponential Parisian ruin, defined in \eqref{eq:expPar_ruin}.  Let $\fR_{\tau, \omega}$ denote the collection of predictable processes $\mR: (\tau, \omega] \to \R^+$, by which we mean that for fixed $y$, the mapping $(t, \varpi) \mapsto R_t(\varpi, y) \times {\bf 1}_{\{ \tau < t \le \omega \}}$ is predictable with respect to the filtration $\mathbb{F}$, and $0 \le R_t(y) \le y$ for all $t$ in the stochastic interval $(\tau, \omega]$.

\begin{defin}
A pair $(\tau, \zeta)$ is called a {\rm random initial condition} if $\tau$ is an $\mathbb{F}$-stopping time taking values in $[0, K]$ and $\zeta$ is an $\mF_{\tau}$-measurable random variable.  Then, for $\mR \in \fR_{\tau, K}$, the insurer's surplus process ${X}^{\tau, \zeta, \mR}$ is given by, for $t \in [\tau, K)$,
\begin{align}
\label{eq:X_rand}
X^{\tau, \zeta, \mR}_t
&= \zeta + \int^t_{\tau} \left( - \kap + \la \left( (1 + \tet) \E R_s + \eta \E \big( Y R_s \big) - \dfrac{\eta}{2} \, \E \big( R^2_s \big) \right) \right)d s - \int^t_{\tau} R_s dN_s.
\end{align}
\end{defin}

For convenience in what follows, we introduce a so-called {\it coffin state} $\Delta$, which represents the state when exponential Parisian ruin occurs.  We set $\Delta + x = \Delta$ for all $x \in \R$ and $X_t = \Delta$ for all $ t \in [K, \infty)$.  For any function $u$ defined on $\R$, we extend it to $\R \cup \{ \Delta \}$ by setting $u(\Delta) = 1$.  Next, we define a stochastic supersolution.

\begin{defin} \label{def:stoch_super}
A u.s.c.\ function $v: \R \to \left[0, \, \frac{\rho}{\rho + \bet} \right]$ is called a {\rm stochastic supersolution} if it satisfies the following properties:
\begin{itemize}
\item[$(1)$]
For any random initial condition $(\tau, \zeta)$, there exists a retention strategy $\mR \in \fR_{\tau, K}$ such that, for any
$\mathbb{F}$-stopping time $\omega \in [\tau, K]$,
\[
e^{-\bet \tau} v(\zeta) \ge \E \left[e^{-\bet \omega} v \big({X}^{\tau, \zeta, \mR}_\omega \big) \Big | \, \mF_{\tau}\right] ~~ a.s,
\]
in which $v$ is understood to be its extension to $\R \cup \{\Delta \}$.  We say that $\mR$ is {\rm associated with} $v$ for the initial condition $(\tau, \zeta)$.
\item[$(2)$] $\lim \limits _{x \to - \infty} v(x) = \dfrac{\rho}{\rho + \bet} \, $ and $\lim \limits _{x \to \infty} v(x) \ge 0$.
\end{itemize}
Let ${\Psi^+}$ denote the set of stochastic supersolutions.  \qed
\end{defin}

$\Psi^+$ is non-empty because $\frac{\rho}{\rho + \bet} \in \Psi^+$.  However, it is more useful to have a stochastic supersolution that satisfies the boundary conditions with equality; therefore, we present the following lemma.

\begin{lemma}\label{lem:opsi}
The function $\opsi$ defined in \eqref{eq:opsi} is a stochastic supersolution.
\end{lemma}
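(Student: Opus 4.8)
The plan is to verify the two defining properties of a stochastic supersolution in Definition \ref{def:stoch_super} for $\opsi$. Property $(2)$ is immediate from the explicit formula \eqref{eq:opsi}: as $x \to -\infty$, $e^{\gamt x} \to 0$ so $\opsi(x) \to \frac{\rho}{\rho+\bet}$, and as $x \to +\infty$, $e^{-\gamo x} \to 0$ so $\opsi(x) \to 0$; and $\opsi$ is manifestly continuous (hence u.s.c.) on $\R$ by the smooth-pasting built into the formula, with range inside $\left[0,\frac{\rho}{\rho+\bet}\right]$. So the real content is property $(1)$: given a random initial condition $(\tau,\zeta)$, I must exhibit a single retention strategy $\mR \in \fR_{\tau,K}$ such that $e^{-\bet t}\opsi(X^{\tau,\zeta,\mR}_t)$ is a supermartingale on $[\tau,K]$ (with the convention $\opsi(\Delta)=1$), which then gives the required inequality by optional stopping at $\omega$.

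The natural candidate strategy is the feedback strategy suggested by the adjustment-coefficient analysis: set $R_t = Y$ (full retention) when $X_{t-} < 0$ and $R_t = \hR(Y;\gamo)$ from Proposition \ref{prop:gamo} when $X_{t-} \ge 0$. With this choice, I would apply the Itô/Dynkin formula for the jump process \eqref{eq:X_rand} to $e^{-\bet t}\opsi(X_t)$. Writing $\mathcal{L}^\mR$ for the associated generator, the drift term of $e^{-\bet t}\opsi(X_t)$ is $e^{-\bet t}\big(\mathcal{L}^\mR\opsi(X_t) - \bet\opsi(X_t)\big)$, and I must show this is $\le 0$. On $\{x \ge 0\}$, plugging $\opsi(x) = \frac{\rho}{\rho+\bet}\frac{\gamt}{\gamo+\gamt}e^{-\gamo x}$ into $\mathcal{L}^\mR\opsi - \bet\opsi$ reduces, after factoring out the exponential, exactly to the defining equation \eqref{eq:HJB_gamo} for $\gamo$ with $R = \hR(\cdot;\gamo)$ — here I must be careful that the jump $x - \hR(Y;\gamo)$ may land below zero, but since the formal substitution in Section \ref{sec:adj_coeff} was made "including when $x-R<0$," the computation with the single exponential branch $\frac{\rho}{\rho+\bet}\frac{\gamt}{\gamo+\gamt}e^{-\gamo x}$ in place of $\opsi$ produces precisely the left-hand side of \eqref{eq:HJB_gamo}, and the difference between that extended exponential and the true $\opsi$ on $\{x<0\}$ only improves the inequality because $\opsi(x) \le \frac{\rho}{\rho+\bet}\frac{\gamt}{\gamo+\gamt}e^{-\gamo x}$ there. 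A symmetric computation on $\{x < 0\}$ with $R = Y$ and $\opsi(x) = \frac{\rho}{\rho+\bet} - \frac{\rho}{\rho+\bet}\frac{\gamo}{\gamo+\gamt}e^{\gamt x}$ collapses to \eqref{eq:HJB_gamt}–\eqref{eq:gamt}, again noting that after a full-retention jump the surplus becomes $x - Y$, which may be positive, where the true $\opsi$ is $\le \frac{\rho}{\rho+\bet} - \frac{\rho}{\rho+\bet}\frac{\gamo}{\gamo+\gamt}e^{\gamt x}$ (since $\opsi \le \frac{\rho}{\rho+\bet}$ everywhere and the linearized expression overshoots), so the inequality is preserved. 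One also checks the coffin-state contribution: when a jump triggers exponential Parisian ruin the process moves to $\Delta$ with $\opsi(\Delta)=1$, and the $\rho(\opsi - 1)\mathbf{1}_{\{x<0\}}$ term in the generator is exactly what accounts for this, so consistency with \eqref{eq:HJB_gamt} is automatic.

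The main obstacle I anticipate is the non-local jump term landing outside the branch where the relevant exponential was fitted — precisely the issue flagged in the introduction as the key difficulty of the jump case. The clean way around it is the observation above: $\opsi$ lies below each of its two "extended" exponential envelopes on the complementary half-line, so replacing $\opsi$ by the envelope in the jump term only decreases $\E\opsi(x-R)$ appropriately (in the right direction for the supermartingale inequality), and after this replacement the drift inequality becomes an exact equality by construction of $\gamo$ and $\gamt$. Secondary technical points — that the local martingale part is a true martingale (bounded integrand since $\opsi$ is bounded and the jump intensity is $\la$), that $\opsi$ is $\mathcal{C}^1$ at $0$ with bounded one-sided second derivatives so Itô applies, and that $\mR$ as defined is genuinely predictable (it depends on $X_{t-}$, not $X_t$) — are routine. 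Finally, taking conditional expectation given $\mF_\tau$ and using the supermartingale property between $\tau$ and the stopping time $\omega \in [\tau,K]$ yields $e^{-\bet\tau}\opsi(\zeta) \ge \E\big[e^{-\bet\omega}\opsi(X^{\tau,\zeta,\mR}_\omega)\,\big|\,\mF_\tau\big]$ a.s., which is property $(1)$, completing the proof.
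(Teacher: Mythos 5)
Your overall strategy is the same as the paper's: verify condition (2) directly, then for condition (1) use the feedback strategy ($R=Y$ when $X_{t-}<0$, $R=\hR(\cdot;\gamo)$ when $X_{t-}\ge 0$), apply It\^o, and reduce the drift inequality to the adjustment-coefficient equations, handling cross-branch jumps by comparing $\opsi$ with the relevant exponential envelope. For $x\ge 0$ your envelope observation, that $\opsi(z)\le \frac{\rho}{\rho+\bet}\frac{\gamt}{\gamo+\gamt}e^{-\gamo z}$ for $z<0$, is exactly the content of the paper's inequality \eqref{eq:ineq3} and its elementary lemma $\frac{b}{a+b}e^{az}+\frac{a}{a+b}e^{-bz}\ge 1$ for $z>0$, so that part is sound and matches the paper.

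There is, however, a factual slip in your treatment of the $x<0$ branch. You write that after a full-retention jump "the surplus becomes $x-Y$, which may be positive." For $x<0$ and $Y>0$, one always has $x-Y<x<0$, so the jump never leaves the negative branch; that is precisely why the paper's equation \eqref{eq:cR_xneg} is an exact equality with no correction term. Your backup claim -- that $\opsi(z)\le \frac{\rho}{\rho+\bet}\bigl(1-\frac{\gamo}{\gamo+\gamt}e^{\gamt z}\bigr)$ for $z>0$ -- is in fact false: the right-hand side tends to $-\infty$ as $z\to\infty$ while $\opsi(z)\to 0$, so the "extended lower-branch envelope" is eventually strictly below $\opsi$ (the elementary inequality you would need, $\frac{\gamo}{\gamo+\gamt}e^{-\gamt z}+\frac{\gamt}{\gamo+\gamt}e^{\gamo z}\le 1$, is the reverse of the true one). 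This does not break your proof, because the case you are trying to cover simply never arises, but the statement as written is incorrect and worth removing; replace it by the observation that $x<0$ and $R=Y$ force $x-R<0$, so the negative-branch formula applies without any envelope comparison.
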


\begin{proof}
By construction, $\opsi$ is in $\mC^1(\R)$, and it decreases from $\frac{\rho}{\rho + \bet}$ to $0$ on $\R$; thus, $\opsi$ is u.s.c.\ and satisfies condition (2) in Definition \ref{def:stoch_super} with equality.  To show condition (1) of that definition, consider a random initial condition $(\tau, \zeta)$, and define the retention strategy $\check \mR = \{ \cR_t \}_{\tau \le t \le K}$ in feedback form by
\begin{equation*}
\cR_t(y) =
\begin{cases}
y, &\quad X_t < 0, \\
\hR(y), &\quad X_t \ge 0,
\end{cases}
\end{equation*}
in which $\hR$ is given in \eqref{eq:hR}.  For $x < 0$, the proof of Proposition \ref{prop:gamt} shows us that
\begin{equation}\label{eq:cR_xneg}
\bet \opsi(x) + \rho \big( \, \opsi(x) - 1 \big) + \left( \kap - \la \left( (1 + \tet) \E Y + \dfrac{\eta}{2} \, \E \big(Y^2 \big) \right) \right) \opsi_x(x) -  \la \big( \E \opsi(x - Y) - \opsi(x) \big) = 0.
\end{equation}
Furthermore, it is not difficult to show that, for $x \ge 0$,
\begin{equation}\label{eq:cR_xpos}
\bet \opsi(x) + \left( \kap - \la \left( (1 + \tet) \E \hR + \eta \E \big(Y \hR \big) - \dfrac{\eta}{2} \, \E \big( \hR^2 \big) \right) \right) \opsi_x(x) -  \la \left( \E \opsi \big(x - \hR \big) - \opsi(x) \right) \ge 0.
\end{equation}
Indeed, it follows from \eqref{eq:HJB_gamo} that inequality \eqref{eq:cR_xpos} is equivalent to
\begin{equation}\label{eq:ineq3}
\int_0^\infty \left[ \dfrac{\gamt}{\gamo + \gamt} \, e^{\gamo (\hR(y) - x)} + \dfrac{\gamo}{\gamo + \gamt} \, e^{- \gamt (\hR(y) - x)} - 1\right] {\bf 1}_{\{ \hR(y) > x \}} dF_Y(y) \ge 0.
\end{equation}
To show inequality \eqref{eq:ineq3}, it is enough to show
\[
\dfrac{b}{a + b} \, e^{az} + \dfrac{a}{a + b} \, e^{-bz} \ge 1,
\]
for all $z > 0$ and for $a$ and $b$ positive constants; proving this final inequality is a fun calculus exercise.

By applying a general version of It\^o's formula (see Protter \cite{P2005}) to $e^{- \bet \omega} \, \opsi \Big( X^{\tau, \zeta, \check \mR}_{\omega} \Big)$, we obtain
\begin{align}\label{eq:opsi_Ito}
&e^{- \bet \omega} \, \opsi \Big( X^{\tau,\zeta, \check \mR}_{\omega} \Big) \notag\\
&= e^{- \bet \tau} \, \opsi(\zeta) - \int_\tau^{\omega} e^{- \bet t} \bigg[ \bet \opsi \Big(X^{\tau, \zeta, \check \mR}_t \Big) + \rho \left( \opsi \Big(X^{\tau,\zeta, \check \mR}_t \Big) - 1 \right) {\bf 1}_{\{X^{\tau,\zeta, \check \mR}_t < 0 \}} \notag \\
&~~~\qquad \qquad \qquad \qquad \qquad + \left( \kap - \la \left( (1 + \tet) \E \cR_t + \eta \E(Y \cR_t) - \dfrac{\eta}{2} \, \E \big(\cR^2_t \big) \right) \right) \opsi_x \Big(X^{\tau, \zeta, \check \mR}_t \Big) \notag \\
&~~~\qquad \qquad \qquad \qquad \qquad  -  \la \Big( \E \opsi \Big(X^{\tau,\zeta, \check \mR}_t - \cR_t \Big) - \opsi \Big(X^{\tau,\zeta, \check \mR}_t \Big) \Big) \bigg] dt  \notag\\
&\quad + M_{\omega} + \int_\tau^{\omega} e^{- \bet t} \left(1- \opsi \Big(X^{\tau,\zeta, \check \mR}_t \Big) \right) {\bf 1}_{\{X^{\tau,\zeta, \check \mR}_t < 0 \}} d(N_t^{\rho} - \rho t),
\end{align}
in which $N^{\rho}$ is the jump process associated with exponential Parisian ruin, and
\begin{align*}
M_t = \sum \limits_{\substack{ X^{\tau,\zeta, \check \mR}_s \ne X^{\tau,\zeta, \check \mR}_{s-}\\ \tau \le s \le t }} e^{- \bet s} \left(\opsi \Big(X^{\tau,\zeta, \check \mR}_s \Big) - \opsi \Big(X^{\tau,\zeta, \check \mR}_{s-} \Big) \right) - \la \int_\tau^{t} e^{- \bet s} \left(\E \opsi \Big(X^{\tau, \zeta, \check \mR}_s - \cR_s \Big)
- \opsi \Big(X^{\tau,\zeta, \check \mR}_s \Big)\right) ds
\label{eq:M_t}
\end{align*}
is a martingale with zero $\mF_\tau$-expectation.  From \eqref{eq:cR_xneg} and \eqref{eq:cR_xpos}, we know that the integrand of the first integral is non-negative. Because $\opsi$ is bounded, the $\mF_\tau$-expectation of the second integral equals zero.  Thus, by taking the $\mF_\tau$-expectation of the expression in \eqref{eq:opsi_Ito}, we obtain
\[
e^{- \bet \tau} \, \opsi(\zeta) \ge \E \left[ e^{- \bet \omega} \, \opsi \Big({X}^{\tau,\zeta, \check \mR}_\omega \Big) \Big | \, \mF_{\tau}\right].
\]
Thus, $\opsi$ satisfies condition (1) in Definition \ref{def:stoch_super}, in which $\check \mR$ is associated with $\opsi$ for any initial condition $(\tau, \zeta)$.
\end{proof}

\begin{lemma}\label{lem:psi_v}
For any $v \in \Psi^+$, we have $\psi \le v$ on $\R$, that is, the minimum discounted probability of exponential Parisian ruin is a lower bound of any stochastic supersolution.
\end{lemma}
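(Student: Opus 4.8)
The plan is to use directly the supermartingale-type inequality encoded in condition (1) of Definition~\ref{def:stoch_super}. Fix $x \in \R$ and $v \in \Psi^+$. Apply that condition to the deterministic random initial condition $(\tau, \zeta) = (0, x)$: there is a retention strategy $\mR \in \fR_{0, K}$ associated with $v$ such that, for every $\mathbb{F}$-stopping time $\omega \in [0, K]$,
\[
v(x) \ge \E^x \big[ e^{-\bet \omega} \, v \big( X^{0, x, \mR}_\omega \big) \big],
\]
where $v$ is understood through its coffin-state extension $v(\Delta) = 1$. The whole proof consists of choosing $\omega$ cleverly and letting it exhaust $[0,K)$.

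Concretely, I would take $\omega = K \wedge n$ for $n \in \N$; this is an $\mathbb{F}$-stopping time taking values in $[0,K]$, so it is an admissible choice. Split the expectation according to whether exponential Parisian ruin has occurred by time $n$. On the event $\{K \le n\}$ we have $K \wedge n = K$, hence $X^{0,x,\mR}_{K \wedge n} = X^{0,x,\mR}_{K} = \Delta$ and $e^{-\bet(K \wedge n)} = e^{-\bet K}$, so the integrand equals $e^{-\bet K} v(\Delta) = e^{-\bet K}$; on the event $\{K > n\}$ the integrand equals $e^{-\bet n} v\big(X^{0,x,\mR}_n\big) \ge 0$ because $v$ is non-negative. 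Therefore
\[
v(x) \ge \E^x \Big[ e^{-\bet K} \, {\bf 1}_{\{K \le n\}} \Big] + \E^x \Big[ e^{-\bet n} \, v\big(X^{0,x,\mR}_n\big) \, {\bf 1}_{\{K > n\}} \Big] \ge \E^x \Big[ e^{-\bet K} \, {\bf 1}_{\{K \le n\}} \Big].
\]
Since $e^{-\bet K} {\bf 1}_{\{K \le n\}} \uparrow e^{-\bet K} {\bf 1}_{\{K < \infty\}}$ as $n \to \infty$, monotone (equivalently, dominated) convergence yields $v(x) \ge \E^x \big[ e^{-\bet K} {\bf 1}_{\{K < \infty\}} \big]$ under this particular strategy $\mR$.

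Finally I would note that the surplus path $X^{0,x,\mR}$ on $[0,K)$, and hence $K$ itself, depends only on $\mR$ through its values on $[0,K)$; extending $\mR$ arbitrarily past $K$ (say by full retention) produces an admissible strategy in $\fR$ with the same discounted probability of exponential Parisian ruin, hence a valid competitor in the infimum defining $\psi(x)$ in \eqref{eq:psi}. Consequently $\psi(x) \le \E^x \big[ e^{-\bet K} {\bf 1}_{\{K < \infty\}} \big] \le v(x)$, and as $x$ was arbitrary, $\psi \le v$ on $\R$. I expect no genuine analytic difficulty here; the only points needing care are purely bookkeeping --- the consistent use of the coffin-state convention $v(\Delta) = 1$ so that the terminal term on $\{K \le n\}$ reproduces exactly the payoff in \eqref{eq:psi}, the verification that $K \wedge n$ is a legitimate stopping time valued in $[0,K]$, and the harmless identification of $\fR_{0,K}$-strategies with elements of $\fR$ for the purpose of computing the objective.
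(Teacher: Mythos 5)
Your argument is correct and follows essentially the same route as the paper's proof: both apply the stochastic supersolution property with initial condition $(0,x)$, split the expectation according to whether exponential Parisian ruin occurs, invoke the coffin-state convention $v(\Delta)=1$, and conclude via the infimum defining $\psi$. The only difference is that you stop at $\omega = K \wedge n$ and pass to the limit by monotone convergence, whereas the paper applies the inequality directly with $\omega = K$ (interpreting $e^{-\bet K}=0$ on $\{K=\infty\}$); your variant is a harmless, marginally more cautious version of the same computation.
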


\noindent{\it Proof.}  First, note that $\psi \le v$ on the boundary of $\R$ by condition (2) in Definition \ref{def:stoch_super}.  Second, for $x \in \R$, let $(\tau, \zeta) = (0, x)$, and let $\mR$ be associated with $v$ for this initial condition.  By applying the supermartingale property (1) in Definition \ref{def:stoch_super} with $\omega = K$, the time of exponential Parisian ruin, and by recalling that $v(\Delta) = 1,$ we have
\begin{align*}
\qquad \qquad v(x) \ge \, \E \left[ e^{-\bet K} v \Big(X_{K}^{x, \mR} \Big) \right] \,
&= \E \left[ e^{-\bet K} v \Big(X_{K}^{x, \mR} \Big) {\bf 1}_{\{K < \infty\}} \right] + \E \left[ e^{-\bet K} v \Big(X_{K}^{x, \mR} \Big) {\bf 1}_{\{K = \infty\}} \right] \notag\\
& \ge \E \left[ e^{-\bet K} {\bf 1}_{\{K < \infty\}} \right] \ge \psi(x). \qquad\qquad \qquad \qquad \qquad \qquad \quad \qed
\end{align*}

\begin{lemma}\label{lem:sup-comb}
If $v_1$ and $v_2$ are two stochastic supersolutions, then $v = v_1 \wedge v_2$ is also a stochastic supersolution.
\end{lemma}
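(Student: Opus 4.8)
The plan is to verify directly the two conditions of Definition \ref{def:stoch_super} for $v := v_1 \wedge v_2$. The non-dynamic requirements are immediate: $v$ is u.s.c.\ as the minimum of two u.s.c.\ functions and takes values in $\left[0, \frac{\rho}{\rho+\bet}\right]$ because each $v_i$ does; and condition (2) holds since $v_1(x), v_2(x) \to \frac{\rho}{\rho+\bet}$ as $x \to -\infty$ forces $v(x) \to \frac{\rho}{\rho+\bet}$, while $\lim_{x\to\infty} v(x) \ge 0$ is automatic from $v \ge 0$. So the only substantive point is the supermartingale property, condition (1).

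To establish condition (1), fix a random initial condition $(\tau, \zeta)$. Let $\mR^1, \mR^2 \in \fR_{\tau,K}$ be retention strategies associated with $v_1$ and with $v_2$, respectively, for $(\tau,\zeta)$. The idea is to decide at time $\tau$ which supersolution to ``follow.'' Introduce the $\mF_\tau$-measurable event $A = \{v_1(\zeta) \le v_2(\zeta)\}$ and set
\begin{equation*}
\mR_t = \mR^1_t \, {\bf 1}_A + \mR^2_t \, {\bf 1}_{\Omega \setminus A}, \qquad t \in (\tau, K].
\end{equation*}
I would first argue that $\mR \in \fR_{\tau,K}$: it clearly satisfies $0 \le R_t(y) \le y$, and it is predictable because $A \in \mF_\tau$ makes ${\bf 1}_A {\bf 1}_{\{\tau < t \le K\}}$ the indicator of a predictable stochastic interval, so multiplying the predictable processes $\mR^1, \mR^2$ by it preserves predictability. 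I would then note that, since \eqref{eq:X_rand} is solved pathwise and its coefficients depend on the strategy only through $R_s$, the controlled surpluses coincide on each piece of the partition: $X^{\tau,\zeta,\mR} = X^{\tau,\zeta,\mR^1}$ on $A$ and $X^{\tau,\zeta,\mR} = X^{\tau,\zeta,\mR^2}$ on $\Omega\setminus A$ (with both sent to the coffin state $\Delta$, where all three functions equal $1$, at time $K$).

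Now fix an $\mathbb F$-stopping time $\omega \in [\tau, K]$. Using that ${\bf 1}_A$ is $\mF_\tau$-measurable, that $X^{\tau,\zeta,\mR} = X^{\tau,\zeta,\mR^1}$ on $A$, that $v \le v_1$, the supermartingale property of $v_1$, and finally that $v(\zeta) = v_1(\zeta)$ on $A$, one gets
\begin{align*}
{\bf 1}_A \, \E\left[ e^{-\bet\omega} v\big(X^{\tau,\zeta,\mR}_\omega\big) \,\big|\, \mF_\tau \right]
&= {\bf 1}_A \, \E\left[ e^{-\bet\omega} v\big(X^{\tau,\zeta,\mR^1}_\omega\big) \,\big|\, \mF_\tau \right]
\le {\bf 1}_A \, \E\left[ e^{-\bet\omega} v_1\big(X^{\tau,\zeta,\mR^1}_\omega\big) \,\big|\, \mF_\tau \right] \\
&\le {\bf 1}_A \, e^{-\bet\tau} v_1(\zeta) = {\bf 1}_A \, e^{-\bet\tau} v(\zeta).
\end{align*}
The symmetric computation on $\Omega \setminus A$, with $v_2$ in place of $v_1$, gives the analogous bound multiplied by ${\bf 1}_{\Omega\setminus A}$. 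Adding the two and noting ${\bf 1}_A + {\bf 1}_{\Omega\setminus A} = 1$ yields $e^{-\bet\tau} v(\zeta) \ge \E\big[ e^{-\bet\omega} v(X^{\tau,\zeta,\mR}_\omega) \,\big|\, \mF_\tau\big]$ a.s., so $\mR$ is associated with $v$ for $(\tau,\zeta)$, and condition (1) holds.

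\textbf{Main obstacle.} There is no deep difficulty here; the only delicate points are the measure-theoretic bookkeeping, namely checking that the switched control $\mR$ is genuinely admissible (predictability of the switching indicator, viewed as a predictable stochastic interval) and that pathwise uniqueness for \eqref{eq:X_rand} legitimizes splitting the conditional expectation over $A$ and $\Omega\setminus A$. The conceptual content is just the lattice property: choose, at the initial time $\tau$, to follow whichever of $v_1, v_2$ is smaller at $\zeta$.
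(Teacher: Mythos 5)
Your proof is correct and follows essentially the same approach as the paper: partition on the $\mF_\tau$-measurable event comparing $v_1(\zeta)$ and $v_2(\zeta)$, follow the associated control on each piece, and combine the two supermartingale inequalities using $v \le v_i$. If anything you are slightly more careful than the paper's own writeup, which writes an equality in the step that splits the conditional expectation where in fact only an inequality (via $v \le v_1$ on $A$, $v \le v_2$ on $A^c$) holds; the conclusion is the same.
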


\noindent{\it Proof.}  We only need to verify item (1) of the definition of stochastic supersolution.  To that end, fix a random initial condition $(\tau, \zeta)$. Because $v_1$ and $v_2$ are two stochastic supersolutions, it follows that there are two controls $\mR_{i} \in
\fR_{\tau, K}$ for $i =1, 2$, such that
\begin{align*}
e^{- \bet \tau} v_i(\zeta) \ge \E\left[ e^{- \bet \omega} v_i \big({X}^{\tau,\zeta, \mR_{i}}_\omega \big) \Big | \, \mF_{\tau}\right]~~a.s.
\end{align*}
Define $A = \{v_1(\zeta) < v_2 (\zeta)\} \in \mF_{\tau}$, and define a retention strategy $\mR$ by
 \[
 \mR = \mR_1 {\bf 1}_A + \mR_2 {\bf 1}_{A^c}.
 \]
 Thus,
\begin{align*}
\qquad \qquad \E\left[ e^{- \bet \omega} v \Big({X}^{\tau,\zeta, \mR}_\omega \Big) \Big | \mF_{\tau}\right]
 & =  \E \left[ e^{- \bet \omega} v_1 \Big({X}^{\tau,\zeta, \mR_1}_\omega \Big)  {\bf 1}_A \Big | \mF_{\tau}\right]
 + \E \left[ e^{- \bet \omega} v_2 \Big({X}^{\tau,\zeta, \mR_2}_\omega \Big) {\bf 1}_{A^c} \Big | \mF_{\tau}\right] \\
 & \le  e^{- \bet \tau} v_1(\zeta) {\bf 1}_A + e^{- \bet \tau} v_2(\zeta) {\bf 1}_{A^c} = e^{- \bet \tau} v(\zeta) ~~a.s. \qquad \qquad \quad \;\; \qed
\end{align*}



\begin{thm}\label{thm:v_plus}
The upper stochastic envelope $v_{+}$, defined by
\begin{equation}\label{eq:v_plus}
v_{+}(x) = \inf \limits_{v \in {\Psi^+}} v(x),
\end{equation}
for $x \in \R$, is a viscosity subsolution of \eqref{HJB_F} and \eqref{boundary_condition}.
\end{thm}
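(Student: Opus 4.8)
\emph{Proof strategy.} The plan is to run the stochastic Perron scheme of Bayraktar and S\^irbu \cite{BS2013}, the new feature being the controlled Poisson jumps. First the cheap facts. By Lemma \ref{lem:opsi}, $\opsi \in \Psi^+$, so $v_+ \le \opsi$; by Lemma \ref{lem:psi_v}, $\psi \le v$ for every $v \in \Psi^+$, so $\psi \le v_+ \le \opsi$. Since $\lim_{x \to -\infty} \psi(x) = \lim_{x \to -\infty} \opsi(x) = \rho/(\rho + \bet)$ and $0 \le v_+ \le \opsi \to 0$ as $x \to \infty$, $v_+$ satisfies the boundary conditions \eqref{boundary_condition}. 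Each $v \in \Psi^+$ is u.s.c.\ with values in $[0, \rho/(\rho+\bet)]$, and a pointwise infimum of u.s.c.\ functions is u.s.c., so $v_+$ is an admissible candidate for a viscosity subsolution; the substance is the interior inequality in Definition \ref{def:strict_glob}, which I would establish by contradiction.

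So suppose $v_+$ is not a viscosity subsolution: there exist $x_0 \in \R$ and $\var \in \mC^1(\R)$ such that $v_+ - \var$ attains a strict global maximum of $0$ at $x_0$, while $F\big(x_0, \var(x_0), \var_x(x_0), \var(\cdot)\big) > 0$ if $x_0 \ne 0$, and $\lim_{x \to 0-} F\big(x, \var(x), \var_x(x), \var(\cdot)\big) > 0$ if $x_0 = 0$. Because $F$ restricted to $[0, \rho/(\rho+\bet)]$-valued functions is u.s.c.\ (see the remark following Definition \ref{def:strict_glob}), and because at $x = 0$ the $x \ge 0$ branch of $F$ exceeds the $x < 0$ branch (as $\var(0) = v_+(0) \le \rho/(\rho+\bet) < 1$ forces $\rho(\var(0) - 1) < 0$), there are $\veps > 0$ and a ball $B := (x_0 - r, x_0 + r)$ with $F\big(\cdot, \var(\cdot), \var_x(\cdot), \var(\cdot)\big) \ge \veps$ on $B$; a routine modification of $\var$ away from $x_0$ (using $\E Y < \infty$ and $0 \le R(Y) \le Y$ to bound the resulting change in the nonlocal term) also lets me take $\var$ bounded. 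The one algebraic observation is that shifting the test function by a constant leaves $\E w(x - R) - w(x)$ unchanged, so with $\var^\eta := \var - \eta$,
\[
F\big(x, \var^\eta(x), \var^\eta_x(x), \var^\eta(\cdot)\big) = F\big(x, \var(x), \var_x(x), \var(\cdot)\big) - \bet \eta - \rho \eta \, {\bf 1}_{\{x < 0\}} \ \ge \ \veps - (\bet + \rho)\eta \ \ge \ 0
\]
on $B$ whenever $\eta \le \veps / (\bet + \rho)$. Shrinking $r$ and $\eta$ further, I also arrange $0 < \var^\eta \le \rho/(\rho+\bet)$ on $B$, where the positivity uses Lemma \ref{lem:psi_pos}: $\var(x_0) = v_+(x_0) \ge \psi(x_0) > 0$. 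Thus $\var^\eta$ is a ``classical'' stochastic supersolution as long as the surplus stays in $B$: applying It\^o's formula as in \eqref{eq:opsi_Ito} with $\opsi$ replaced by $\var^\eta$ and the retention taken to be the feedback minimizer $R^*$ of $F$, the drift integrand equals $F\big(X_t, \var^\eta(X_t), \var^\eta_x(X_t), \var^\eta(\cdot)\big) \ge 0$, so $e^{-\bet t} \var^\eta(X_t)$ is a supermartingale while $X_t \in B$.

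Next I would splice $\var^\eta$ into an existing stochastic supersolution. From the strict maximum and upper semicontinuity of $v_+$, there is $\del > 0$ with $v_+ \le \var - 2\del$ on the compact annulus $A := \{ r/2 \le |x - x_0| \le r \}$. For each $p \in A$, choosing $v^p \in \Psi^+$ nearly minimal at $p$ and using the continuity of $\var$ (which transports the pointwise near-minimality to a neighborhood of $p$, so that no lower semicontinuity of $v_+$ is needed), then passing to a finite subcover and taking the minimum of the corresponding finitely many supersolutions---again in $\Psi^+$ by Lemma \ref{lem:sup-comb}---I obtain $\tilde v \in \Psi^+$ with $\tilde v < \var^\eta$ on $A$. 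Define
\[
w := \begin{cases} \tilde v \wedge \var^\eta & \text{on } B, \\ \tilde v & \text{off } B. \end{cases}
\]
Then $w$ is u.s.c., takes values in $[0, \rho/(\rho+\bet)]$, coincides with $\tilde v$ near $\pm \infty$ (so it inherits the boundary conditions), and satisfies $w(x_0) \le \var^\eta(x_0) = v_+(x_0) - \eta < v_+(x_0)$. Hence if $w \in \Psi^+$, then $v_+(x_0) \le w(x_0) < v_+(x_0)$, the desired contradiction. To see $w \in \Psi^+$, given a random initial condition $(\tau, \zeta)$ I would use the control that follows $R^*$ while the surplus is in the closed set $\bar G := \overline{\{ x \in B : \var^\eta(x) < \tilde v(x) \}} \subseteq \bar B_{r/2}(x_0)$, on which $w = \var^\eta$, and follows the control associated with $\tilde v$ while the surplus is off $\bar G$, on which $w = \tilde v$, concatenating the two feedback laws across the successive crossing times: on each $R^*$-piece the It\^o estimate above gives the supermartingale property (since $F(\cdot, \var^\eta, \cdot) \ge 0$ on $B \supseteq \bar G$), on each $\tilde v$-piece the defining property of $\tilde v$ does, and at a continuous exit from $\bar G$ the surplus lands on $\partial \bar G$, where $w = \var^\eta = \tilde v$, so nothing is lost at the handoff.

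The genuine obstacle, and the only step that truly departs from the diffusion template, is the nonlocal term: while the surplus is in $\bar G$ a claim makes it jump to $z = X_{t-} - R^*_t(Y)$, and $z$ may land well below $x_0 - r$, outside $B$, where $w = \tilde v$ and there is no inequality relating $\var^\eta(z)$ and $w(z)$, so the It\^o bound for $e^{-\bet t} \var^\eta(X_t)$ cannot simply be passed over to $\tilde v$. I would handle this in the spirit of Bayraktar and Zhang \cite{BZ2015b}, splitting such a jump into two stages---first to the point $x_0 - r \in A \subseteq B$ on the boundary of the localizing window, where $w = \tilde v < \var^\eta$ so that the handoff to the $\tilde v$-control is admissible, and then onward to $z$---exploiting that the jump is downward ($z \le X_{t-}$) and that $\var$ is globally defined with $\var \ge v_+$. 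I expect this jump-splitting, together with the bookkeeping for the countably many crossings of $\partial \bar G$, to be the technical heart of the proof; the integrability against the Poisson measure (via $R^*(Y) \le Y$ and $\E Y < \infty$) and the $x_0 = 0$ case---where one only needs the branch observation above to propagate the localization $F \ge \veps$ across $0$---are routine.
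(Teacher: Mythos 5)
You have the scaffolding right: the boundary conditions, upper semicontinuity, the localization $F(\cdot,\var^\eta(\cdot),\var^\eta_x(\cdot),\var^\eta(\cdot)) \ge 0$ on a ball $B$ after shifting the test function downward (correctly noting the nonlocal term is unaffected by constant shifts), the construction of $\tilde v \in \Psi^+$ with $\tilde v < \var^\eta$ on the annulus via compactness and Lemma~\ref{lem:sup-comb}, and the candidate $w = (\tilde v \wedge \var^\eta)\,\mathbf{1}_B + \tilde v\,\mathbf{1}_{B^c}$. These match the paper's $v^{\veps'} = v_n \wedge \iota^{\veps'}_n$ in spirit. You also correctly diagnose the genuine obstacle: a claim while $X$ is in the inner ball can throw the surplus far below $B$, where $w = \tilde v$ bears no pointwise relation to $\var^\eta$, so the It\^o supermartingale estimate for $e^{-\bet t}\var^\eta(X_t)$ cannot be handed off to $w$ at such a jump.

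The proposed fix is where the argument breaks down. Bayraktar--Zhang's jump-splitting works because in their setting the jump \emph{is} the singular control, so a single admissible jump can legitimately be replaced by two consecutive admissible jumps; here the jump is $R_t(Y)$ driven by the realized Poisson claim $Y$, and the insurer has no admissible way to ``stop at $x_0 - r$ and continue.'' Moreover the It\^o formula applied to $\var^\eta$ already builds in the term $\E\var^\eta(X_{t-} - R_t(Y)) - \var^\eta(X_{t-})$ at the true landing point; it does not decompose into a boundary-passage plus continuation, and there is no inequality linking $\var^\eta(z)$ to $\tilde v(z)$ for $z \notin B$, since $\tilde v \ge v_+$ while $\var \ge v_+$ gives information in the wrong direction. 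The paper sidesteps all of this by introducing a \emph{globally defined} $\mC^1$-on-the-inner-ball intermediary $\iota^{\veps'}_n = \chi\var^{\veps'} + (1-\chi)v_n$ (equation \eqref{eq:iota}), applying It\^o to $\iota^{\veps'}_n$ (well-defined at every jump destination), and then using the tautological bound $\iota^{\veps'}_n \ge v_n \wedge \iota^{\veps'}_n = v^{\veps'}$ to pass to $v^{\veps'}$ at $\tau_1 \wedge \omega$ \emph{even when the exit is by a far jump}. That blend function is precisely the device your proof is missing, and it also removes the need for your countably-many-crossings bookkeeping: one runs the special control only until the first exit $\tau_1$ from $B_{h/2}(x_0)$ and then switches once and for all to the control associated with $v_n$ from $(\tau_1, \zeta_1)$, as in \eqref{ineq:rho_tau}--\eqref{ineq:vet}.
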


\begin{proof}
$v_{+}$ takes values in $\left[ 0, \, \frac{\rho}{\rho + \bet} \right]$ and is u.s.c.\ because it is the pointwise infimum of a set of u.s.c.\ functions that take values in $\left[ 0, \, \frac{\rho}{\rho + \bet} \right]$.  $v_{+}$ satisfies the boundary conditions in \eqref{boundary_condition}, namely, $\lim \limits_{x \to - \infty} v_{+}(x) = \frac{\rho}{\rho + \bet}$ and $\lim \limits_{x \to \infty} v_{+}(x) = 0$.  The first limit follows from the definition of stochastic supersolution.  For the second limit, $\lim \limits_{x \to \infty} v_{+}(x) \ge 0$ from the definition of stochastic supersolution and from the definition of $v_{+}$ in \eqref{eq:v_plus}, $\lim \limits_{x \to \infty} \opsi(x) = 0$ from the definition of $\opsi$ in \eqref{eq:opsi}, and $v_{+} \le \opsi$ from Lemma \ref{lem:opsi}.

Next, we show the interior viscosity subsolution property.  Let  $x_0 \in \R$ and $\var \in \mC^1(\R)$ be such that $v_{+} - \var$ attains a strict, global maximum at $x_0$ with $v_{+}(x_0) = \var(x_0)$.  We need to show that
\begin{equation*}
\begin{cases}
F\big(x_0, \var(x_0), \var_x(x_0), \var(\cdot)\big) \le 0, &\quad x_0 \ne 0, \\
\lim \limits_{x \to 0-} F\big(x, \var(x), \var_x(x), \var(\cdot) \big) \le 0, &\quad x_0 = 0.
\end{cases}
\end{equation*}
Assume, on the contrary, that
\begin{equation*}
\begin{cases}
F\big(x_0, \var(x_0), \var_x(x_0), \var(\cdot)\big) > 0, &\quad x_0 \ne 0, \\
\lim \limits_{x \to 0-} F\big(x, \var(x), \var_x(x), \var(\cdot) \big) > 0, &\quad x_0 = 0.
\end{cases}
\end{equation*}
Then, by the continuity of $\var$ and $F$ (away from 0), by the strict maximization of $v_{+} - \var$ at $x = x_0$, and by $\var(x_0) = v_{+}(x_0) \ge \psi(x_0) > 0$,\footnote{Lemmas \ref{lem:psi_pos} and \ref{lem:psi_v} imply that $v_{+}(x_0) \ge \psi(x_0) > 0$.} there exists $h > 0$, such that
\begin{equation}\label{eq:F_pos}
F\big(x, \var(x), \var_x(x), \var(\cdot) \big) > 0,
\end{equation}
for all $x \in B_h(x_0) := (x_0 - h, x_0 + h)$, such that
\begin{equation}\label{eq:max_strict}
v_{+}(x) - \var(x) < 0, \quad \text{for all} ~ x \in B_h(x_0) \backslash \{x_0 \},
\end{equation}
and such that
\begin{equation}\label{eq:var_pos}
\var(x) > 0, \quad \text{for all} ~ x \in B_h(x_0).
\end{equation}

Because the set $B := \overline{B_h(x_0)} \backslash B_{h/2}(x_0)$ is compact, because $v_{+} - \var$ is u.s.c., and because of inequality \eqref{eq:max_strict} on $B_h(x_0) \backslash \{x_0\}$, there exists a $\del > 0$ such that
$$
v_{+}(x) + \del \le \var(x), \quad \hbox{for all} ~ x \in B.
$$
By Proposition 4.1 in Bayraktar and S\^irbu \cite{BS2012} and by Lemma \ref{lem:sup-comb}, we know that $v_{+}$ is the limit of a non-increasing sequence of stochastic supersolutions $\{ v_n \}_{n \in \N}$.  Fix $\del' \in (0, \del)$, and define a sequence of sets $\{A_n\}_{n \in \N}$ by
\[
A_n = \left\{x \in B: v_n(x) + \del' \ge \var(x)\right\}.
\]
Because each $v_n - \var$ is u.s.c., each $A_n$ is closed.  Also, because $\{v_n\}_{n \in \N}$ is non-increasing, it follows that $\{ A_n \}_{n\in\N}$ is non-increasing; hence, $\bigcap_{n \in \N} A_n = \emptyset$ because $\del' < \del$.  Therefore, because each $A_n$ is closed, there exists $N$ such that $A_n = \emptyset$ for all $n \ge N$, which implies
\begin{equation*}
v_n(x) + \del' < \var(x), \quad \text{for all} ~ n \ge N ~ \text{and} ~ x \in B.
\end{equation*}
For $\veps > 0$, define $\var^\veps$ on $\R$ by $\var^\veps(x) = \var(x) - \veps$.  Note that
\begin{align*}
F\big(x, \var^\veps(x), \var^\veps_x(x), \var^\veps(\cdot) \big) &= F\big(x, \var(x), \var_x(x), \var(\cdot) \big) + \bet (\var^\veps(x) - \var(x)) + \rho( \var^\veps(x) - \var(x))  {\bf 1}_{\{x < 0\}} \\
&=  F\big(x, \var(x), \var_x(x), \var(\cdot) \big) - \veps \big( \bet + \rho {\bf 1}_{\{x < 0\}} \big),
\end{align*}
for all $x \in \R$.  Then, \eqref{eq:F_pos} and \eqref{eq:var_pos} imply that there exists a $\veps' \in (0, \del')$ small enough so that
\begin{align}
\label{oper:F_ve}
F\big(x, \var^{\veps'}(x), \var^{\veps'}_x(x), \var^{\veps'}(\cdot) \big) > 0, \quad \text{for all} ~ x \in B_h(x_0),
\end{align}
and
\begin{equation}\label{eq:vareps_pos}
\var^{\veps'}(x) \ge 0, \quad \text{for all} ~ x \in B_h(x_0).
\end{equation}
We also have, for all $n \ge N$ and $x \in B$,
\begin{align}\label{eq:vn_varveps}
v_n(x) < \var(x) - \del' < \var(x) - \veps' = \var^{\veps'}(x),
\end{align}
and
\begin{align}
\var^{\veps'}(x_0) < \var(x_0) = v_{+}(x_0) \le v_n(x_0).
\label{ineq:com-var}
\end{align}
For $n \ge N$, define the function $\iota^{\veps'}_n$ on $\R$ by
\begin{equation}\label{eq:iota}
\iota^{\veps'}_n(x) = \chi(x) \var^{\veps'}(x) + \big(1 - \chi(x) \big) v_n(x),
\end{equation}
in which $\chi$ is a continuously differentiable function satisfying
\begin{equation*}
\begin{cases}
 0 \le \chi (x) \le 1,  \qquad &\text{for all} ~ x \in \R, \\
 \chi(x) = 1, \qquad  &\text{for} ~ x \in \overline{B_{h/2}(x_0)},  \\
 \chi(x) = 0, \qquad  &\text{for} ~ x \in \overline{B_h(x_0)}^{\, c}.
\end{cases}
\end{equation*}

Next, fix some $n \ge N$, and define the function $v^{\veps'}$ on $\R$ by
\begin{equation}
\label{eq:v_eps}
v^{\veps'}(x) = v_n(x) \wedge \iota^{\veps'}_n(x).
\end{equation}
We claim $v^{\veps'} = v_n$ outside ${B_{h/2}(x_0)}$.  Indeed, because $\chi(x) = 0$ for $x \in \overline{B_h(x_0)}^{\, c}$, we have $v^{\veps'} = v_n$ outside $\overline{B_h(x_0)}$.  Also, on $B = \overline{B_h(x_0)} \backslash {B_{h/2}(x_0)}$, inequality \eqref{eq:vn_varveps} gives us $v_n < \var^{\veps'}$, which implies $v_n \le \iota^{\veps'}_n$; thus, $v^{\veps'} = v_n$ on $B$.  Moreover, $\iota^{\veps'}_n =  \var^{\veps'}$ on $B_{h/2}(x_0)$.  To summarize this discussion, we can express $v^{\veps'}$ as follows:
\begin{equation}
\label{eq:v_eps2}
v^{\veps'}(x) =
\begin{cases}
v_n(x) \wedge \var^{\veps'}(x), &\quad x \in B_{h/2}(x_0), \\
v_n(x), &\quad x \notin B_{h/2}(x_0).
\end{cases}
\end{equation}
In light of \eqref{ineq:com-var}, \eqref{eq:iota}, \eqref{eq:v_eps}, and \eqref{eq:v_eps2} we  have
$$
v^{\veps'}(x_0) = \iota^{\veps'}_n(x_0) = \var^{\veps'}(x_0) < v_{+}(x_0).
$$
Thus, if we can show $v^{\veps'} \in \Psi^+$, then we will contradict the pointwise minimality of $v_{+}$, and the proof will be complete.

To prove $v^{\veps'} \in \Psi^+$, note that $v^{\veps'}$ is u.s.c.\ because $v_n$ and $\iota^{\veps'}_n$ are both u.s.c.  The function $v^{\veps'}$ takes values in $\left[ 0, \, \frac{\rho}{\rho + \bet} \right]$ because $v_n$ takes values in that interval and $\var^{\veps'}$ is non-negative on $B_{h/2}(x_0)$.  Condition (2) in Definition \ref{def:stoch_super} is satisfied because $v^{\veps'} = v_n$ outside ${B_{h/2}(x_0)}$. Therefore, we only need to verify condition (1), that is, the supermartingale property.

Let $(\tau, \zeta)$ be any random initial condition and $\mR_0 \in \fR_{\tau, K}$ be the $(\tau, \zeta)$-admissible control in condition (1) associated with the stochastic supersolution $v_n$.  Let $A$ denote the event
\begin{equation}\label{eq:A_v}
A = \big\{\zeta \in B_{h/2}(x_0) ~ \hbox{and} ~ \iota^{\veps'}_n(\zeta) < v_n(\zeta) \big\}.
\end{equation}
Because $\iota^{\veps'}_n = \var^{\veps'}$ on $\overline{B_{h/2}(x_0)}$, we see from \eqref{oper:F_ve} that
\begin{equation}\label{eq:F_v_veps}
F\big(x, \iota^{\veps'}_n(x), \iota^{\veps'}_{n,x}(x), \iota^{\veps'}_n(\cdot)\big) > 0, \quad \text{for all} ~ x \in \overline{B_{h/2}(x_0)}.
\end{equation}
Hence, for each $x \in \overline{B_{h/2}(x_0)}$, there exists a retention function $R_\iota = R_\iota(x, y)$ such that
\begin{align}
&\bet \iota^{\veps'}_n + \rho \big( \iota^{\veps'}_n - 1 \big) {\bf 1}_{\{x < 0\}} + \left( \kap -  \la \left( (1 + \tet) \E R_{\iota} + \eta \E(YR_{\iota}) - \dfrac{\eta}{2} \, \E \big(R_{\iota}^2 \big) \right) \right) \iota^{\veps'}_{n,x} \notag \\
&\quad - \la \left( \E \iota^{\veps'}_n(x - R_{\iota}) - \iota^{\veps'}_n(x) \right) > 0.
\label{ieq:iot_v}
\end{align}
Define a new admissible control $\mR_1 \in \fR_{\tau, K}$ by
\begin{equation}\label{eq:R1}
\mR_1 = \mR_\iota {\bf 1}_A + \mR_0 {\bf 1}_{A^c},
\end{equation}
define the stopping time
\[
\tau_1 = \left\{t \ge \tau: X^{\tau,\zeta, \mR_1}_t \notin B_{h/2}(x_0) \right\},
\]
and define the random variable
\[
\zeta_1 = X^{\tau,\zeta, \mR_1}_{\tau_1}.
\]
In \eqref{eq:R1}, the strategy $\mR_\iota$ is defined on $A$ via $R_{\iota, t} = R_\iota(X_{t-}, y)$ for $X_{t-} \in A$, in which $R_\iota(x, y)$ on the right side is the function that satisfies inequality \eqref{ieq:iot_v}. Note that, because $X^{\tau, \zeta, \mR_1}$ follows a jump process, $\zeta_1$ might not lie on the boundary of $B_{h/2}(x_0)$.

By applying a general version of It\^o's formula (see Protter \cite{P2005}) to $e^{-\bet(\tau_1 \wedge \omega)} \iota^{\veps'}_n \Big( X^{\tau,\zeta, \mR_1}_{\tau_1 \wedge \omega} \Big)$, we have\footnote{Note that $\iota^{\veps'}_n = \var^{\veps'}$ on $B_{h/2}(x_0)$ and is, thus, continuously differentiable on that interval.}
\begin{align}
&e^{-\bet(\tau_1 \wedge \omega)} \iota^{\veps'}_n \Big( X^{\tau,\zeta, \mR_1}_{\tau_1 \wedge \omega} \Big) \notag\\
&= e^{-\bet \tau} \iota^{\veps'}_n(\zeta)
 - \int^{\tau_1 \wedge \omega}_{\tau} e^{-\bet t} \bigg[ \bet \iota^{\veps'}_n \big(X^{\tau,\zeta, \mR_1}_t \big) + \rho \big(\iota^{\veps'}_n \big(X^{\tau,\zeta, \mR_1}_t \big) - 1 \big) {\bf 1}_{\{X^{\tau,\zeta, \mR_1}_t < 0\}} \notag \\
&\; \qquad \qquad \qquad \qquad \qquad \qquad + \left( \kap - \la \left( (1 + \tet) \E {R_1} + \eta \E(Y{R_1}) - \dfrac{\eta}{2} \, \E \big({R^2_1} \big) \right) \right) \iota^{\veps'}_{n,x} \big(X^{\tau,\zeta, \mR_1}_t \big) \notag \\
&\; \qquad \qquad \qquad \qquad \qquad \qquad  - \la \left( \E \iota^{\veps'}_n \big(X^{\tau,\zeta, \mR_1}_t - R_{1,t} \big) - \iota^{\veps'}_n(X^{\tau,\zeta, \mR_1}_t) \right) \bigg] dt  \notag\\
&\quad + M_{\tau_1 \wedge \omega} + \int^{\tau_1 \wedge \omega}_{\tau} e^{-\bet t} \big(1 - \iota^{\veps'}_n \big(X^{\tau,\zeta, \mR_1}_t \big) \big) {\bf 1}_{\{X^{\tau,\zeta, \mR_1}_t < 0\}} d(N_t^{\rho} - \rho t),
\label{Ito_iot}
\end{align}
in which $N^{\rho}$ is the jump process associated with exponential Parisian ruin, and
\begin{align}
M_t &= \sum \limits_{\substack{ X^{\tau,\zeta, \mR_1}_s \ne X^{\tau,\zeta, \mR_1}_{s-} \\ \tau \le s \le t }} e^{-\bet s} \left( \iota^{\veps'}_n \big(X^{\tau,\zeta, \mR_1}_s \big) - \iota^{\veps'}_n \big(X^{\tau,\zeta, \mR_1}_{s-} \big) \right) \notag \\
& \quad - \la \int^{t}_{\tau} e^{-\bet s} \left(\E \iota^{\veps'}_n \big(X^{\tau,\zeta, \mR_1}_s - R_{1,s} \big) - \iota^{\veps'}_n \big(X^{\tau,\zeta, \mR_1}_s \big) \right) ds
\label{eq:M_t}
\end{align}
is a martingale with zero $\mF_\tau$-expectation.  From \eqref{ieq:iot_v}, we know that the integrand of the first integral is non-negative.  Because $\iota^{\veps'}_n$ is bounded on $B_{h/2}(x_0)$, the $\mF_\tau$-expectation of the second integral equals zero.  Thus, by taking the $\mF_\tau$-expectation of the expression in \eqref{Ito_iot}, we obtain
\[
e^{-\bet \tau} \iota^{\veps'}_n(\zeta) {\bf 1}_A \ge \E \left[ e^{-\bet(\tau_1 \wedge \omega)} \iota^{\veps'}_n \Big( X^{\tau,\zeta, \mR_1}_{\tau_1 \wedge \omega} \Big) {\bf 1}_A \bigg| \mF_{\tau} \right]
~~a.s.\]
By the definition of $v^{\veps'}$, we also have
\begin{align*}
e^{-\bet(\tau_1 \wedge \omega)} \iota^{\veps'}_n \Big( X^{\tau,\zeta, \mR_1}_{\tau_1 \wedge \omega} \Big) {\bf 1}_A
\ge e^{-\bet(\tau_1 \wedge \omega)} v^{\veps'} \Big( X^{\tau,\zeta, \mR_1}_{\tau_1 \wedge \omega} \Big) {\bf 1}_A.
\end{align*}
Thus, from the definition of $A$ in \eqref{eq:A_v} and from the above inequalities, we obtain
\begin{align}
\label{ineq:vet_A}
e^{-\bet \tau} v^{\veps'}(\zeta) {\bf 1}_A = e^{-\bet \tau} \iota^{\veps'}_n(\zeta) {\bf 1}_A
\ge \E\left[ e^{-\bet(\tau_1 \wedge \omega)} v^{\veps'} \Big( X^{\tau,\zeta, \mR_1}_{\tau_1 \wedge \omega} \Big) {\bf 1}_A  \bigg| \mF_{\tau}\right]~~a.s.
\end{align}
On the other hand, we know that $\mR_0$ is the $(\tau, \zeta)$-control associated with $v_n$, $\mR_1 = \mR_0$ on $A^c$, and
$v^{\veps'} = v_n$ outside $B_{h/2}(x_0)$; thus, from the supermartingale property of $v_n$, we have
\begin{align}
\label{ineq:vet_Ac}
e^{-\bet \tau} v^{\veps'}(\zeta) {\bf 1}_{A^c} = e^{-\bet \tau} v_n(\zeta) {\bf 1}_{A^c}
&\ge \E \left[ e^{-\bet(\tau_1 \wedge \omega)} v_n \Big( X^{\tau,\zeta, \mR_1}_{\tau_1 \wedge \omega} \Big) {\bf 1}_{A^c} \bigg|\mF_{\tau} \right] \notag \\
&\ge \E\left[ e^{-\bet(\tau_1 \wedge \omega)} v^{\veps'} \Big( X^{\tau,\zeta, \mR_1}_{\tau_1 \wedge \omega} \Big) {\bf 1}_{A^c}  \bigg|\mF_{\tau} \right]~~a.s.
\end{align}
By combining \eqref{ineq:vet_A} and \eqref{ineq:vet_Ac}, we get
\begin{align}
e^{-\bet \tau} v^{\veps'}(\zeta)\ge \E\left[ e^{-\bet(\tau_1 \wedge \omega)} v^{\veps'} \Big( X^{\tau,\zeta, \mR_1}_{\tau_1 \wedge \omega} \Big) \bigg| \mF_{\tau} \right]~~a.s.
\label{ineq:vet}
\end{align}

Let $\mR_2 \in \fR_{\tau, K}$ denote the control associated with $v_n$ for the starting time $\tau_1$ and initial condition $\zeta_1$.  Define a new retention strategy $\mR' \in \fR_{\tau, K}$ by
\[
\mR' = \mR_1 {\bf 1}_{\{\tau < t \le \tau_1\}} + \mR_2 {\bf 1}_{\{t > \tau_1\}}.
\]
Rewrite the right side of inequality \eqref{ineq:vet} as
\begin{align}
\label{eq:vet_div}
\E\left[ e^{-\bet(\tau_1 \wedge \omega)} v^{\veps'} \Big( X^{\tau,\zeta, \mR_1}_{\tau_1 \wedge \omega} \Big) \bigg| \mF_{\tau} \right]
= \E\left[ e^{-\bet \omega} v^{\veps'} \Big(X^{\tau,\zeta, \mR'}_{\omega} \Big) {\bf 1}_{\{\omega \le \tau_1\}}
+ e^{-\bet \tau_1} v^{\veps'}(\zeta_1) {\bf 1}_{\{\omega > \tau_1\}}  \bigg| \mF_{\tau} \right]~~a.s.
\end{align}
Also, because $v^{\veps'} = v_n$ outside $B_{h/2}(x_0)$, from the supermartingale property of $v_n$, we obtain
\begin{align}
\label{ineq:rho_tau}
e^{-\bet \tau_1} v^{\veps'}(\zeta_1) {\bf 1}_{\{\omega > \tau_1\}} = e^{-\bet \tau_1} v_n(\zeta_1) {\bf 1}_{\{\omega > \tau_1\}}
& \ge \E\left[ e^{-\bet \omega} v_n \Big( X^{\tau_1,\zeta_1, \mR_2}_{\omega} \Big) {\bf 1}_{\{\omega > \tau_1\}} \bigg| \mF_{\tau_1} \right] \notag\\
& \ge \E\left[ e^{-\bet \omega} v^{\veps'} \Big( X^{\tau_1,\zeta_1, \mR'}_{\omega} \Big) {\bf 1}_{\{\omega > \tau_1\}} \bigg| \mF_{\tau_1} \right]~~ a.s.
\end{align}
By substituting \eqref{ineq:rho_tau} into \eqref{eq:vet_div}, and by combining the result with \eqref{ineq:vet}, we finally get
\begin{align*}
e^{-\bet \tau} v^{\veps'}(\zeta)\ge \E\left[ e^{-\bet \omega} v^{\veps'} \Big( X^{\tau,\zeta, \mR'}_{\omega} \Big) \bigg| \mF_{\tau}\right]~~a.s.
\end{align*}
Hence, $v^{\veps'} \in \Psi^+$, with associated $(\tau, \zeta)$-retention strategy $\mR'$, and we have completed the proof of this theorem.
\end{proof}

An immediate corollary of the definition of $v_{+}$ in \eqref{eq:v_plus} and of Lemma \ref{lem:psi_v} is the following.

\begin{cor}\label{cor:psi_vplus}
$\psi \le v_{+}$ on $\R$, that is, the minimum discounted probability of exponential Parisian ruin is a lower bound of $v_{+}$.  \qed
\end{cor}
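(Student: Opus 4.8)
\noindent\emph{Proof proposal.} The plan is to obtain the inequality immediately from Lemma~\ref{lem:psi_v} together with the definition of $v_{+}$ in \eqref{eq:v_plus}; there is essentially nothing left to prove once those two ingredients are in hand. First, I would recall that Lemma~\ref{lem:psi_v} already gives $\psi(x) \le v(x)$ for \emph{every} $x \in \R$ and \emph{every} stochastic supersolution $v \in \Psi^{+}$. That lemma is where the real work sits: it applies the supermartingale property (1) of Definition~\ref{def:stoch_super} at the stopping time $\omega = K$ (the time of exponential Parisian ruin), uses the convention $v(\Delta) = 1$ for the coffin state together with $v \ge 0$ to bound $\E^{x}\big[ e^{-\bet K} v \big( X_{K}^{x, \mR} \big) \big]$ below by $\E^{x}\big[ e^{-\bet K} {\bf 1}_{\{K < \infty\}} \big]$, and then recognizes the latter as at least $\psi(x)$ because $\mR \in \fR$ is admissible.

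Second, fixing $x \in \R$ and taking the pointwise infimum over $v \in \Psi^{+}$ on the right-hand side of the inequality $\psi(x) \le v(x)$, the definition $v_{+}(x) = \inf_{v \in \Psi^{+}} v(x)$ yields $\psi(x) \le v_{+}(x)$, which is the claim. I would note that this infimum is over a non-empty set---indeed $\opsi \in \Psi^{+}$ by Lemma~\ref{lem:opsi}, so $v_{+}$ is a genuine, $\left[ 0, \frac{\rho}{\rho + \bet} \right]$-valued function and not identically $+\infty$.

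The only point requiring any care, and it is a trivial one, is that the pointwise infimum of a family of functions each dominating $\psi$ still dominates $\psi$ pointwise; so I do not anticipate any obstacle here. In other words, the corollary is genuinely ``immediate,'' all of the analytic and probabilistic difficulty having already been discharged in Lemmas~\ref{lem:psi_v} and~\ref{lem:opsi}.
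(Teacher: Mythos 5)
Your proposal is correct and matches the paper's reasoning exactly: the paper labels this an immediate corollary of Lemma~\ref{lem:psi_v} together with the definition of $v_{+}$ in \eqref{eq:v_plus}, which is precisely the pointwise-infimum argument you give. Nothing further is needed.
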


\subsection{Stochastic subsolution}\label{sec:42}

\begin{defin} \label{def:stoch_sub}
An l.s.c.\ function $u: \R \to \left[ 0, \, \frac{\rho}{\rho + \bet} \right]$ is called a {\rm stochastic subsolution} if it satisfies the following properties:
\begin{itemize}
\item[$(1)$]
For any random initial condition $(\tau, \zeta)$, any retention strategy $\mR \in \fR_{\tau, K}$, and any $\mathbb{F}$-stopping time $\omega \in [\tau, K]$,
\[
e^{-\bet \tau} u(\zeta) \le \E \left[ e^{-\bet \omega} u \big({X}^{\tau, \zeta, \mR}_\omega \big) \Big | \, \mF_{\tau} \right] ~~ a.s.,
\]
in which $u$ is understood to be its extension to $\R \cup \{\Delta \}$.
\item[$(2)$] $\lim \limits _{x \to - \infty} u(x) \le \dfrac{\rho}{\rho + \bet}$ and $\lim \limits _{x \to \infty} u(x) = 0$.
\end{itemize}
Let ${\Psi^-}$ denote the set of stochastic subsolutions.  \qed
\end{defin}

$\Psi^-$ is non-empty because $0 \in \Psi^-$.  However, it is more useful to have a stochastic subsolution that satisfies the boundary conditions with equality; therefore, we present the following lemma.

\begin{lemma}\label{lem:upsi}
The function $\upsi$ is a stochastic subsolution, in which $\upsi$ is defined by
\begin{equation}\label{eq:upsi}
\upsi(x) =
\begin{cases}
\dfrac{\rho}{\rho + \bet} \left( 1 - e^{\gamt(x +1)} \right), &\quad x < -1, \\
0, &\quad x \ge -1,
\end{cases}
\end{equation}
with $\gamt$ equal to the unique positive solution of \eqref{eq:gamt}.
\end{lemma}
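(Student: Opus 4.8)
The plan is to check the two conditions of Definition \ref{def:stoch_sub} for $\upsi$. The structural part is immediate: the two branches in \eqref{eq:upsi} agree at $x = -1$ (both equal $0$), so $\upsi$ is continuous on $\R$, hence l.s.c.; it takes values in $\left[0, \frac{\rho}{\rho+\bet}\right]$ because $e^{\gamt(x+1)} \in (0,1)$ for $x < -1$; and it satisfies condition (2), since $\lim_{x\to-\infty}\upsi(x) = \frac{\rho}{\rho+\bet}$ and $\lim_{x\to\infty}\upsi(x) = 0$.

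The heart of the argument is the pointwise differential inequality: for every $x \in \R$ (using a one-sided derivative at $x = -1$) and every retention function $R$ with $0 \le R(y) \le y$,
\begin{equation*}
\bet\upsi(x) + \rho\big(\upsi(x)-1\big){\bf 1}_{\{x<0\}} + \left(\kap - \la\left((1+\tet)\E R + \eta\E(YR) - \dfrac{\eta}{2}\E\big(R^2\big)\right)\right)\upsi_x(x) - \la\big(\E\upsi(x-R) - \upsi(x)\big) \le 0 .
\end{equation*}
For $x \ge -1$ this is clear, since there $\upsi(x) = \upsi_x(x) = 0$, so the left side reduces to $-\rho{\bf 1}_{\{x<0\}} - \la\E\upsi(x-R)$, which is nonpositive because $\upsi \ge 0$. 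For $x < -1$ one has $x - R(y) < -1$ as well, so plugging the exponential branch of $\upsi$ into the left side and simplifying (the $\bet$- and $\rho$-terms combine to $-\rho\,e^{\gamt(x+1)}$, and $\E\upsi(x-R) - \upsi(x) = \frac{\rho}{\rho+\bet}e^{\gamt(x+1)}\big(1 - M_R(-\gamt)\big)$) reduces the inequality, after dividing by the positive factor $\frac{\rho}{\rho+\bet}e^{\gamt(x+1)}$, to
\begin{equation*}
\rho + \bet \ge -\kap\gamt + \la\left[\left((1+\tet)\E R + \eta\E(YR) - \dfrac{\eta}{2}\E\big(R^2\big)\right)\gamt - \big(1 - M_R(-\gamt)\big)\right],
\end{equation*}
which holds for every admissible $R$ because, by Proposition \ref{prop:gamt} and equation \eqref{eq:HJB_gamt}, the right side is dominated by its supremum over $R$, namely $\rho + \bet$ (equality being attained at full retention).

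With the differential inequality in hand, I would fix a random initial condition $(\tau,\zeta)$, a strategy $\mR \in \fR_{\tau,K}$, and a stopping time $\omega \in [\tau, K]$, and apply the general version of It\^o's formula (see Protter \cite{P2005}) to $e^{-\bet t}\upsi\big(X^{\tau,\zeta,\mR}_t\big)$ on $[\tau,\omega]$, exactly as in the proof of Lemma \ref{lem:opsi}; here the jump of the surplus into the coffin state $\Delta$ at rate $\rho{\bf 1}_{\{X<0\}}$, together with the convention $\upsi(\Delta) = 1$, produces the term $\rho(\upsi - 1){\bf 1}_{\{X<0\}}$. The $dt$-integrand is then the left side of the differential inequality evaluated at $R = R_t$, hence nonpositive, while the jump martingale and the compensated Parisian-clock term have zero $\mF_\tau$-conditional expectation ($\upsi$ being bounded). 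Taking $\mF_\tau$-conditional expectations gives $e^{-\bet\tau}\upsi(\zeta) \le \E\big[e^{-\bet\omega}\upsi(X^{\tau,\zeta,\mR}_\omega) \,\big|\, \mF_\tau\big]$ a.s., which is condition (1).

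The only delicate point is the corner of $\upsi$ at $x = -1$: $\upsi$ is $\mC^1$ on $\R\setminus\{-1\}$, and there the left derivative $-\frac{\rho}{\rho+\bet}\gamt$ is strictly less than the right derivative $0$. Because the continuous part of the surplus is of finite variation, It\^o's formula applies with any fixed one-sided version of $\upsi_x$ at $-1$, and --- as the computation above shows --- the differential inequality holds with either choice; alternatively one may approximate $\upsi$ from below by $\mC^1$ functions that still satisfy the differential inequality and pass to the limit. I expect this technicality, rather than the (short) verification of the differential inequality, to be the main thing that needs care; the rest of the proof runs parallel to that of Lemma \ref{lem:opsi}.
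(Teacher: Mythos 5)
Your proof is correct and follows essentially the same route as the paper: verify the pointwise inequality $F\bigl(x,\upsi(x),\upsi_x(x),\upsi(\cdot)\bigr)\le 0$ separately on $\{x\ge -1\}$, on $\{x<-1\}$ (where it is an equality, reducing via Proposition \ref{prop:gamt} and \eqref{eq:HJB_gamt} to the characterization of $\gamt$), and at the kink $x=-1$, then run It\^o's formula and take conditional expectations as in Lemma \ref{lem:opsi}. The paper handles the kink at $-1$ simply by invoking the left-limit version of It\^o's formula (Protter, Chapter IV, Theorem 47), which effectively selects the left derivative; your observation that the continuous part of $X$ is of finite variation, so that any one-sided choice of $\upsi_x(-1)$ is immaterial, is a correct and slightly more transparent justification of the same step.
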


\begin{proof}
By construction, $\upsi$ is in $\mC^0(\R)$, and it decreases from $\frac{\rho}{\rho + \bet}$ to $0$ on $\R$; thus, $\upsi$ is l.s.c.\ and satisfies condition (2) in Definition \ref{def:stoch_sub} with equality.  Thus, we only need to show condition (1) of that definition.  First, we see, for $x > -1$,
\[
F\big(x, \,\upsi(x), \,\upsi'(x),\, \upsi(\cdot) \big) \le 0.
\]
For $x < -1$, by the definition of $\gamt$ and the proof of Proposition \ref{prop:gamt}, one can show that
$$
F\left(x,\, \upsi(x), \, \upsi'(x), \, \upsi(\cdot)\right) = 0.
$$
Finally, for $x = -1$, we have
\[
\lim \limits_{x \to (-1)-} \upsi'(x) = \lim \limits_{x \to (-1)-} \dfrac{\upsi(-1) - \upsi(x)}{-1 - x} = - \, \dfrac{\rho}{\rho + \bet} \, \gam_2,
\]
from which it follows
$$
\lim \limits_{x \to (-1)-} F\big(x,\, \upsi(x), \, \upsi'(x), \, \upsi(\cdot) \big) = 0.
$$
Hence, by applying It\^o's formula (see Theorem 47 in Chapter IV of Protter \cite{P2005}, which uses left limits) to $e^{-\bet \omega} \upsi \big(X_{\omega}^{\tau, \zeta, \mR} \big)$ with initial random condition $(\tau, \zeta)$, retention strategy $\mR \in \fR_{\tau, K}$, and $\mathbb{F}$-stopping time $\omega \in [\tau, K]$, and by taking the $\mF_\tau$-expectation as in the proof of Lemma \ref{lem:opsi}, we obtain
$$
e^{-\bet \tau} \upsi(\zeta) \le \E \left[ e^{-\bet \omega} \upsi \big({X}^{\tau,\zeta, \mR}_\omega \big) \Big | \mF_{\tau} \right] ~~a.s.
$$
Therefore, $ \upsi$ satisfies condition (1) in Definition \ref{def:stoch_sub}.
\end{proof}

\begin{lemma}\label{lem:u_psi}
For any $u \in \Psi^-$, we have $u \le \psi$ on $\R$, that is, the minimum discounted probability of exponential Parisian ruin is an upper bound of any stochastic subsolution.
\end{lemma}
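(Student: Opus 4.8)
The plan is to follow the template of the proof of Lemma~\ref{lem:psi_v}, reversing the direction of the martingale inequality and exploiting the key asymmetry in Definition~\ref{def:stoch_sub}: condition~(1) for a stochastic subsolution must hold for \emph{every} admissible retention strategy, not just for one strategy tailored to $u$. Concretely, I would fix $x \in \R$, take the random initial condition $(\tau, \zeta) = (0, x)$, let $\mR$ be an arbitrary strategy in $\fR_{0, K}$, and apply the submartingale inequality in condition~(1) with the stopping time $\omega = K$, the time of exponential Parisian ruin. This gives
\[
u(x) \le \E^x \big[ e^{-\bet K} u\big(X_K^{x, \mR}\big) \big].
\]

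Next I would evaluate the right-hand side using the coffin-state convention introduced before Definition~\ref{def:stoch_super}: on $\{K < \infty\}$ we have $X_K^{x,\mR} = \Delta$ and $u(\Delta) = 1$, while on $\{K = \infty\}$ the discount factor $e^{-\bet K}$ vanishes (since $\bet > 0$) and $u$ is bounded, so that event contributes nothing. Splitting the expectation over these two events therefore yields $u(x) \le \E^x \big[ e^{-\bet K} {\bf 1}_{\{K < \infty\}} \big]$. Since $\mR \in \fR_{0,K}$ was arbitrary --- and, in particular, every strategy in $\fR$ restricts to an element of $\fR_{0,K}$ realizing the same value of $\E^x\big[e^{-\bet K}{\bf 1}_{\{K<\infty\}}\big]$, because $K$, $\{K<\infty\}$, and $e^{-\bet K}$ depend only on the surplus path up to time $K$ --- taking the infimum over $\mR \in \fR$ produces $u(x) \le \psi(x)$ for every $x \in \R$. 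The boundary behavior requires no separate argument, as the required limits at $\pm\infty$ are part of condition~(2) of Definition~\ref{def:stoch_sub} and are consistent with the known limits of $\psi$.

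This proof is short and involves no hard analysis: there is no It\^o expansion, no localization, and no viscosity machinery, precisely because the defining submartingale inequality of a stochastic subsolution is \emph{assumed} rather than derived, and we invoke it only at the single stopping time $\omega = K$. The one point I would phrase carefully is the identity $\inf_{\mR \in \fR_{0,K}} \E^x\big[e^{-\bet K}{\bf 1}_{\{K<\infty\}}\big] = \psi(x)$, i.e.\ that passing from $\fR$ to the ``stronger formulation'' control class $\fR_{0,K}$ of Section~\ref{sec:41} does not change the discounted-ruin value. I expect this bookkeeping to be the only mild obstacle, and it is entirely routine.
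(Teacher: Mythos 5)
Your proof is correct, but it takes a genuinely different route from the paper's, and the difference is worth understanding.

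The paper does \emph{not} plug in $\omega=K$ directly. Instead it localizes: it applies the submartingale inequality with $\omega=K\wedge\tau_M$, where $\tau_M$ is the first time the surplus hits level $M$, and then decomposes the right-hand side into the two events $\{K<\tau_M\}$ and $\{K\ge\tau_M\}$. On the second event the contribution is $\E^x\big[e^{-\bet\tau_M}u(M)\mathbf 1_{\{K\ge\tau_M\}}\big]$, which the paper kills as $M\to\infty$ \emph{using the boundary condition} $\lim_{x\to\infty}u(x)=0$ from condition~(2) of Definition~\ref{def:stoch_sub}, together with dominated convergence. It then proves the separate fact $\lim_{M\to\infty}\tau_M=\infty$ a.s.\ (by comparison with the deterministic process $x+ct$) to convert $\mathbf 1_{\{K<\tau_M\}}$ into $\mathbf 1_{\{K<\infty\}}$ in the limit. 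Your argument instead takes $\omega=K$ outright and observes that on $\{K=\infty\}$ the factor $e^{-\bet K}$ is zero, so the coffin-state convention for $X_K$ on that event is irrelevant; no localization, no boundary condition, no hitting-time estimate.

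Both arguments are sound for this paper, since $\bet>0$. The contrast is that yours exploits the strict discounting $\bet>0$ while the paper's exploits the boundary condition $\lim_{x\to\infty}u(x)=0$, and the paper's localization is the more robust device: it would survive passing to the undiscounted case $\bet=0$, where your $e^{-\bet K}=0$ argument on $\{K=\infty\}$ breaks down. The paper's route also avoids any need to interpret $X_K$ on $\{K=\infty\}$, since $\{K<\tau_M\}$ forces $K<\infty$. So your proof buys brevity at the cost of being tied to $\bet>0$; the paper's buys generality and sidesteps the $K=\infty$ convention entirely, which is why it incurs the extra $\tau_M\to\infty$ bookkeeping. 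Your remark about $\fR$ versus $\fR_{0,K}$ is a real but routine point, handled identically by both arguments (the paper passes over it silently).

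One small caution: you should make explicit the convention that $e^{-\bet K}u\big(X_K^{x,\mR}\big):=0$ on $\{K=\infty\}$, since $X_K$ is not literally defined there (the paper's $X_t=\Delta$ convention is for $t\in[K,\infty)$ with $K<\infty$). This is harmless because the quantity is multiplied by $e^{-\bet K}=0$ and $u$ is bounded, but it deserves a sentence so the reader does not think $u(\Delta)=1$ is being invoked on $\{K=\infty\}$.
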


\begin{proof}
First, note that $u \le \psi$ on the boundary of $\R$ by condition (2) in Definition \ref{def:stoch_sub}.  Second, for $x \in \R$, let $(\tau, \zeta) = (0, x)$, let $\mR$ be any admissible retention strategy, and let $\omega = \min(K, \tau_M)$, in which $\tau_M$ is defined by
$$
\tau_M = \inf \left\{t \ge 0 : X^{x, \mR}_t = M \right\}, \quad \text{for some}~ M \ge x,
$$
that is, $\tau_M$ is the first time $X^{x, \mR}$ hits the barrier $M$.  Then, by applying the submartingale property (1) in Definition \ref{def:stoch_sub}, we have
\begin{align*}
u(x) \le \E \Big[ e^{-\bet (K \wedge \tau_M)} u \big(X_{K \wedge \tau_M}^{x, \mR}\big) \Big] = \E \Big[ e^{-\bet K} u \big(X_{K}^{x, \mR} \big) {\bf 1}_{\{K < \tau_M\}} \Big] + \E^x \Big[ e^{-\bet \tau_M} u(M) {\bf 1}_{\{K \ge \tau_M\}} \Big].
\end{align*}
Because $u$ is bounded on $\R$ with $\lim \limits_{M \to \infty} u(M) = 0$, the Dominated Convergence Theorem implies
\[
\lim_{M \to \infty}  \E^x \Big[ e^{-\bet \tau_M} u(M) {\bf 1}_{\{K \ge \tau_M\}} \Big] = \E^x \Big[ \lim_{M \to \infty} e^{-\bet \tau_M} u(M) {\bf 1}_{\{K \ge \tau_M\}} \Big] = 0.
\]
Thus, because $u \big(X_{K}^{x, \mR} \big) {\bf 1}_{\{K < \tau_M\}} = {\bf 1}_{\{K < \tau_M\}}$,
\begin{align}
\label{eq:KM}
u(x) \le \lim_{M \to \infty} \E^{x} \big[ e^{- \bet K} {\bf 1}_{\{K < \tau_M\}} \big].
\end{align}
If we were to prove
\begin{align}
\label{eq:tau_M}
\lim \limits_{M \to \infty} \tau_M\big(X^{x, \mR} \big) = \infty ~ a.s.,
\end{align}
then \eqref{eq:KM} and \eqref{eq:tau_M} would imply, by the Dominated Convergence Theorem,
\begin{align*}
u(x) \le \E^{x} \Big[ \lim_{M \to \infty} e^{- \bet K} {\bf 1}_{\{K < \tau_M\}} \Big] = \E^x \Big[ e^{- \bet K} {\bf 1}_{\{ K < \infty \}} \Big].
\end{align*}
Because this inequality holds for any retention strategy, by taking the infimum over all admissible retention strategies, we obtain $u \le \psi$.

It remains for us to prove \eqref{eq:tau_M}.  To that end, construct a simple process $\hat{X}_t = x + c t$ for $t \ge 0$.  It is easy to verify that $\hat{X}_t \ge X^{x, \mR}_t$ a.s.\ for all $t \ge 0$ and all $\mR \in \fR$. Thus, the hitting time to barrier $M \ge x$ for $\hat{X}$ is less than or equal to the hitting time for $X^{x, \mR}$, that is, $\tau_M \big(\hat{X} \big) \le \tau_M\big(X^{x, \mR}\big)$ a.s.  Moreover, we compute $\tau_M(\hat{X}) = (M - x)/c$, so
$$
\tau_M \big(X^{x, \mR} \big) \ge \frac{M - x}{c} \, ~ a.s.
$$
Therefore, $\lim \limits_{M \to \infty} \tau_M\big(X^{x, \mR} \big) = \infty$ a.s., and we have proved \eqref{eq:tau_M}.
\end{proof}

Similar to Lemma \ref{lem:sup-comb}, we have the following lemma for stochastic subsolutions.

\begin{lemma}\label{lem:sub-comb}
If $u_1$ and $u_2$ are two stochastic subsolutions, then $u = u_1 \vee u_2$ is also a stochastic subsolution.  \qed
\end{lemma}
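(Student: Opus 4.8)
The plan is to mirror the proof of Lemma~\ref{lem:sup-comb}, but the argument is in fact simpler: because the submartingale inequality in Definition~\ref{def:stoch_sub} must hold for \emph{every} admissible retention strategy, we never need to switch controls. For $u = u_1 \vee u_2$ we must check that it is l.s.c.\ with values in $\left[0, \frac{\rho}{\rho+\bet}\right]$ and extends correctly to the coffin state $\Delta$, that it satisfies the boundary conditions in item~(2), and that it satisfies the submartingale property in item~(1).

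The first items are immediate: a pointwise maximum of two l.s.c.\ functions is l.s.c., and since $0 \le u_i \le \frac{\rho}{\rho+\bet}$ for $i = 1, 2$, the same bounds hold for $u$; moreover $u_1(\Delta) = u_2(\Delta) = 1$ gives $u(\Delta) = 1$, consistent with the extension convention. For item~(2), $u \le \frac{\rho}{\rho+\bet}$ pointwise forces $\lim_{x\to-\infty} u(x) \le \frac{\rho}{\rho+\bet}$, and since $u_1, u_2 \to 0$ as $x \to \infty$, for every $\veps > 0$ eventually $u_1, u_2 < \veps$, hence $0 \le u < \veps$; thus $\lim_{x\to\infty} u(x) = 0$.

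For the submartingale property, fix a random initial condition $(\tau, \zeta)$, a retention strategy $\mR \in \fR_{\tau,K}$, and an $\mathbb{F}$-stopping time $\omega \in [\tau, K]$. Set $A = \{u_1(\zeta) \ge u_2(\zeta)\}$, which lies in $\mF_\tau$ because $\zeta$ is $\mF_\tau$-measurable and $u_1, u_2$ are Borel; on $A$ we have $u(\zeta) = u_1(\zeta)$ and on $A^c$ we have $u(\zeta) = u_2(\zeta)$. Applying the submartingale property of $u_i$ under the \emph{same} strategy $\mR$ and using $u_i \le u$ on the right-hand side yields, for $i = 1, 2$,
\[
e^{-\bet\tau} u_i(\zeta) \le \E \left[ e^{-\bet\omega} u_i \big( X^{\tau, \zeta, \mR}_\omega \big) \,\Big|\, \mF_\tau \right] \le \E \left[ e^{-\bet\omega} u \big( X^{\tau, \zeta, \mR}_\omega \big) \,\Big|\, \mF_\tau \right].
\]
Multiplying the $i = 1$ inequality by the $\mF_\tau$-measurable indicator ${\bf 1}_A$, the $i = 2$ inequality by ${\bf 1}_{A^c}$, and adding gives
\[
e^{-\bet\tau} u(\zeta) = e^{-\bet\tau} \big( u_1(\zeta) {\bf 1}_A + u_2(\zeta) {\bf 1}_{A^c} \big) \le \E \left[ e^{-\bet\omega} u \big( X^{\tau, \zeta, \mR}_\omega \big) \,\Big|\, \mF_\tau \right] ~~a.s.,
\]
which is item~(1). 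Since $(\tau,\zeta)$, $\mR$, and $\omega$ were arbitrary, $u \in \Psi^-$.

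I do not anticipate a genuine obstacle here; the only points requiring a moment's care are the $\mF_\tau$-measurability of the event $A$ and the observation — in contrast to the supersolution case of Lemma~\ref{lem:sup-comb}, where the control had to be selected depending on which supersolution was smaller — that one keeps the same control on $A$ and $A^c$, exploiting the ``for all $\mR$'' quantifier in Definition~\ref{def:stoch_sub} together with the monotonicity $u_i \le u_1 \vee u_2$.
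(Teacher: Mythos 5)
Your proof is correct and is essentially the argument the paper intends (the paper omits the proof, citing similarity to Lemma~\ref{lem:sup-comb}). Your remark at the end correctly pinpoints the structural asymmetry: in Lemma~\ref{lem:sup-comb} one must \emph{construct} an associated control for $v_1 \wedge v_2$, hence the pasting $\mR = \mR_1 {\bf 1}_A + \mR_2 {\bf 1}_{A^c}$, whereas the subsolution property is a universal statement over all $\mR$, so no control is constructed and the fixed $\mR$ is fed into both $u_1$ and $u_2$. One small remark: once you have $e^{-\bet\tau} u_i(\zeta) \le \E\big[e^{-\bet\omega} u\big(X^{\tau,\zeta,\mR}_\omega\big) \mid \mF_\tau\big]$ for $i=1,2$ with the \emph{same} right-hand side, the $A$/$A^c$ decomposition is dispensable — taking the pointwise maximum of the two left-hand sides immediately gives $e^{-\bet\tau} u(\zeta) \le \E\big[e^{-\bet\omega} u\big(X^{\tau,\zeta,\mR}_\omega\big) \mid \mF_\tau\big]$ a.s. Keeping the decomposition is harmless and makes the parallel with the supersolution proof explicit, but it is not logically needed here.
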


\begin{thm}\label{thm:u_minus}
The lower stochastic envelope $u_{-},$ defined by
\begin{equation}\label{eq:u_minus}
u_{-}(x) = \sup \limits_{u \in {\Psi^-}} u(x),
\end{equation}
is a viscosity supersolution of \eqref{HJB_F} and \eqref{boundary_condition}.
\end{thm}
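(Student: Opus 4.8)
The plan is to mirror the structure of the proof of Theorem \ref{thm:v_plus}, but with the roles of sub- and supersolutions, maxima and minima, interchanged. First I would check the easy structural facts: $u_{-}$ takes values in $\left[0, \frac{\rho}{\rho + \bet}\right]$ and is l.s.c.\ (it is the pointwise supremum of a family of l.s.c.\ functions bounded by $\frac{\rho}{\rho + \bet}$), and it satisfies the boundary conditions. For the limit at $+\infty$, $\lim_{x \to \infty} u_{-}(x) = 0$ because every stochastic subsolution has that limit; for the limit at $-\infty$, $\lim_{x \to -\infty} u_{-}(x) \le \frac{\rho}{\rho + \bet}$ from the definition of stochastic subsolution, while the reverse inequality follows from $\upsi \le u_{-}$ (Lemma \ref{lem:upsi}) and $\lim_{x \to -\infty} \upsi(x) = \frac{\rho}{\rho + \bet}$.

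The substance is the interior viscosity supersolution property. Fix $x_0 \in \R$ and $\phi \in \mC^1(\R)$ such that $u_{-} - \phi$ attains a strict global minimum of zero at $x_0$, and argue by contradiction: suppose $F\big(x_0, \phi(x_0), \phi_x(x_0), \phi(\cdot)\big) < 0$. Since $F$ is u.s.c.\ on the class of functions valued in $\left[0, \frac{\rho}{\rho+\bet}\right]$ (taking the l.s.c.\ envelope / left limit at $x = 0$, exactly as in Definition \ref{def:strict_glob} but now we need the one-sided behavior to work in our favor — this is the one place the discontinuity at $0$ must be handled, and it is the mild analog of the case split in the subsolution definition), there is a ball $B_h(x_0)$ on which $F\big(x, \phi(x), \phi_x(x), \phi(\cdot)\big) < 0$. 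By Proposition 4.1 of Bayraktar and S\^irbu \cite{BS2012} together with Lemma \ref{lem:sub-comb}, $u_{-}$ is the increasing limit of a nondecreasing sequence $\{u_n\}$ of stochastic subsolutions; using the compactness of $B := \overline{B_h(x_0)} \setminus B_{h/2}(x_0)$, the strictness of the minimum, and the Dini-type argument, one gets $\del > 0$ and $N$ with $u_n(x) \ge \phi(x) + \del$ on $B$ for $n \ge N$. For small $\veps' > 0$ set $\phi^{\veps'} = \phi + \veps'$; then $F\big(x, \phi^{\veps'}(x), \phi^{\veps'}_x(x), \phi^{\veps'}(\cdot)\big) < 0$ still holds on $B_h(x_0)$, while $u_n(x) > \phi^{\veps'}(x)$ on $B$ and $\phi^{\veps'}(x_0) > u_n(x_0)$ for $n \ge N$. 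Define the glued function $\iota^{\veps'}_n = \chi \phi^{\veps'} + (1-\chi) u_n$ with a cutoff $\chi$ supported in $B_h(x_0)$ and equal to $1$ on $\overline{B_{h/2}(x_0)}$, and set $u^{\veps'} = u_n \vee \iota^{\veps'}_n$. As in Theorem \ref{thm:v_plus}, $u^{\veps'} = u_n$ off $B_{h/2}(x_0)$ and equals $u_n \vee \phi^{\veps'}$ on it, so $u^{\veps'}(x_0) = \phi^{\veps'}(x_0) > u_{-}(x_0)$, contradicting maximality of $u_{-}$ once we show $u^{\veps'} \in \Psi^{-}$.

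To prove $u^{\veps'} \in \Psi^{-}$ I would verify the submartingale inequality of Definition \ref{def:stoch_sub} for an \emph{arbitrary} admissible control $\mR \in \fR_{\tau, K}$ (this is the crucial asymmetry: unlike the supersolution case we cannot choose the control). Fix a random initial condition $(\tau, \zeta)$, any $\mR$, any stopping time $\omega \in [\tau, K]$, and let $\tau_1 = \inf\{t \ge \tau : X^{\tau,\zeta,\mR}_t \notin B_{h/2}(x_0)\}$, $\zeta_1 = X^{\tau,\zeta,\mR}_{\tau_1}$. On $\{\zeta \in B_{h/2}(x_0)\}$, apply the general It\^o formula (Protter \cite{P2005}) to $e^{-\bet(\tau_1 \wedge \omega)} \iota^{\veps'}_n\big(X^{\tau,\zeta,\mR}_{\tau_1 \wedge \omega}\big)$: since $F\big(x, \iota^{\veps'}_n, \iota^{\veps'}_{n,x}, \iota^{\veps'}_n(\cdot)\big) = F\big(x, \phi^{\veps'}, \phi^{\veps'}_x, \phi^{\veps'}(\cdot)\big) < 0$ on $\overline{B_{h/2}(x_0)}$, the dt-integrand has the sign that makes the expectation inequality run the right way, giving $e^{-\bet \tau}\iota^{\veps'}_n(\zeta) \le \E[e^{-\bet(\tau_1\wedge\omega)}\iota^{\veps'}_n(X^{\tau,\zeta,\mR}_{\tau_1\wedge\omega}) \mid \mF_\tau]$, hence the same with $u^{\veps'} \ge \iota^{\veps'}_n$ on the left? — no: here we need $\iota^{\veps'}_n \ge u^{\veps'}$ is false; rather on $B_{h/2}(x_0)$ we have $\iota^{\veps'}_n = \phi^{\veps'}$ and $u^{\veps'} = u_n \vee \phi^{\veps'} \ge \phi^{\veps'} = \iota^{\veps'}_n$, so $\iota^{\veps'}_n(\zeta) = u^{\veps'}(\zeta)$ only on the event $\{u_n(\zeta) \le \phi^{\veps'}(\zeta)\}$, and on its complement within $B_{h/2}(x_0)$ one just uses the submartingale property of $u_n$ directly since there $u^{\veps'}(\zeta) = u_n(\zeta)$. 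Off $B_{h/2}(x_0)$ we have $u^{\veps'} = u_n$, so the submartingale property of $u_n$ applies on that event. In every case one gets $e^{-\bet\tau}u^{\veps'}(\zeta) \le \E[e^{-\bet(\tau_1\wedge\omega)}u^{\veps'}(X^{\tau,\zeta,\mR}_{\tau_1\wedge\omega}) \mid \mF_\tau]$; then restart from $(\tau_1,\zeta_1)$ using the submartingale property of $u_n$ (valid for the same $\mR$ restricted to $(\tau_1,\omega]$, since it must hold for all controls) together with $u^{\veps'} = u_n$ off $B_{h/2}(x_0)$ — noting that $\zeta_1 \notin B_{h/2}(x_0)$ because the surplus process has no positive jumps, so it exits a neighborhood only across the upper boundary or continuously, whereas downward jumps land strictly below — and conclude $e^{-\bet\tau}u^{\veps'}(\zeta) \le \E[e^{-\bet\omega}u^{\veps'}(X^{\tau,\zeta,\mR}_\omega)\mid\mF_\tau]$. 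The main obstacle I anticipate is precisely this jump bookkeeping at the restart: one must argue that although a downward jump can carry the process far outside $B_{h/2}(x_0)$, the gluing was done so that $u^{\veps'} = u_n$ everywhere outside $B_{h/2}(x_0)$, so the post-jump value is controlled by $u_n$ and the single restart at $\tau_1$ suffices — this is the subsolution analog of the difficulty the authors flagged in the introduction about jumps landing outside a small neighborhood. With $u^{\veps'} \in \Psi^{-}$ established, the contradiction with the maximality of $u_{-}$ completes the proof.
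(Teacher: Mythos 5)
Your proposal is correct and mirrors the paper's proof of Theorem \ref{thm:u_minus} essentially step for step: the dual Dini-type approximation by a nondecreasing sequence of stochastic subsolutions, the cutoff construction $\iota^{\veps'}_n = \chi\phi^{\veps'} + (1-\chi)u_n$, the event split on $A = \{\zeta \in B_{h/2}(x_0),\ \iota^{\veps'}_n(\zeta) > u_n(\zeta)\}$, and the single restart at $\tau_1$ using the fact that the submartingale inequality for $u_n$ must hold for every control. The only small detail you leave implicit is that, when you shrink $h$, you also need $\phi < \frac{\rho}{\rho+\bet}$ on $B_h(x_0)$ (the paper's inequality \eqref{eq:phi_bnd}, available because $\phi(x_0) = u_-(x_0) \le \opsi(x_0) < \frac{\rho}{\rho+\bet}$), so that $\phi^{\veps'} \le \frac{\rho}{\rho+\bet}$ and hence $u^{\veps'}$ genuinely takes values in $\left[0,\frac{\rho}{\rho+\bet}\right]$.
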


\begin{proof}
$u_{-}$ is bounded and l.s.c.\ because it is the pointwise supremum of a set of l.s.c.\ functions that are bounded above the minimum discounted probability of exponential Parisian ruin $\psi$.  $u_{-}$ satisfies the boundary conditions in \eqref{boundary_condition} as $x$ goes to $-\infty$, namely, $\lim \limits_{x \to - \infty} u_{-}(x) = \frac{\rho}{\rho + \bet}$ and $\lim \limits_{x \to \infty} u_{-}(x) = 0$.  Indeed, $\lim \limits_{x \to - \infty} u_{-}(x) \le \frac{\rho}{\rho + \bet}$ from the definition of stochastic subsolution and from the definition of $u_{-}$ in \eqref{eq:u_minus}, $\lim \limits_{x \to -\infty} \upsi(x) = \frac{\rho}{\rho + \bet}$ from the definition of $\upsi$ in \eqref{eq:upsi}, and $u_{-} \ge \upsi$ from Lemma \ref{lem:upsi}.  The second limit, namely, $\lim \limits_{x \to \infty} u_{-}(x) = 0$, follows from the definition of stochastic subsolution.

Next, we show the interior viscosity supersolution property.  Let  $x_0 \in \R$ and $\phi \in \mC^1(\R)$ be such that $u_{-} - \phi$ attains a strict, global minimum at $x_0$ with $u_{-}(x_0) = \phi(x_0)$.  We need to show that
\begin{equation*}
F\big(x_0, \phi(x_0), \phi_x(x_0), \phi(\cdot)\big) \ge 0.
\end{equation*}
Assume, on the contrary, that
\begin{equation*}
F\big(x_0, \phi(x_0), \phi_x(x_0), \phi(\cdot)\big) < 0.
\end{equation*}
Then, by the continuity of $\phi$ and $F$ (away from 0), by the strict minimization of $u_{-} - \phi$ at $x = x_0$, and by $\phi(x_0) = u_{-}(x_0) \le \psi(x_0) \le v_{+}(x_0) \le \opsi(x_0) < \frac{\rho}{\rho + \bet} \,$, there exists $h > 0$ such that
\begin{equation}\label{eq:F_neg}
F\big(x, \phi(x), \phi_x(x), \phi(\cdot)\big) < 0,
\end{equation}
for all $x \in B_h(x_0) := (x_0 - h, x_0 + h)$, and such that
\begin{equation}\label{eq:min_strict}
u_{-}(x) - \phi(x) > 0, \quad \text{for all} ~ x \in B_h(x_0) \backslash \{x_0\},
\end{equation}
and such that
\begin{equation}\label{eq:phi_bnd}
\phi(x) < \dfrac{\rho}{\rho + \bet} \, , \quad \text{for all} ~ x \in B_h(x_0).
\end{equation}

Because the set $B := \overline{B_h(x_0)} \backslash B_{h/2}(x_0)$ is compact, because $u_{-} - \phi$ is l.s.c., and because of inequality \eqref{eq:min_strict} on $B_h(x_0) \backslash \{x_0\}$, there exists a $\del > 0$ such that
$$
u_{-} - \phi \ge \del, \qquad \hbox{for all} ~ x \in B.
$$
Similarly, as in the proof of Theorem \ref{thm:v_plus}, we deduce that $u_{-}$ is the limit a non-decreasing sequence of stochastic subsolutions $\{ u_n\}_{n \in \N}$.  Fix $\del' \in (0, \del)$, and define a sequence of sets $\{A_n\}_{n \in \N}$ by
\[
A_n = \big\{ x \in B: u_n(x) - \phi(x) \le \del' \big\}.
\]
Because each $u_n - \phi$ is l.s.c., each $A_n$ is closed.  Also, because $\{u_n\}_{n \in \N}$ is non-decreasing, it follows that $\{A_n\}_{n \in \N}$ is non-increasing; hence, $\bigcap_{n \in \N} A_n = \emptyset$ because $\del' < \del$.  Therefore, because each $A_n$ is closed, there exists $N$ such that $A_n = \emptyset$ for all $n \ge N$, which implies
\[
u_n(x) > \phi(x) + \del', \quad \hbox{for all} ~ n \ge N ~ \hbox{and} ~ x \in B.
\]
For $\veps > 0$, define $\phi^\veps$ on $\R$ by $\phi^\veps(x) = \phi(x) + \veps$.  Note that
\begin{align*}
F\big(x, \phi^\veps(x), \phi^\veps_x(x), \phi^\veps(\cdot) \big) &= F\big(x, \phi(x), \phi_x(x), \phi(\cdot) \big) + \bet (\phi^\veps(x) - \phi(x)) + \rho( \phi^\veps(x) - \phi(x))  {\bf 1}_{\{x < 0\}}\\
&=  F\big(x, \phi(x), \phi_x(x), \phi(\cdot) \big) + \veps \big( \bet + \rho  {\bf 1}_{\{x < 0\}} \big),
\end{align*}
for all $x \in \R$.  Then, \eqref{eq:F_neg} and \eqref{eq:phi_bnd} imply that there exists a $\veps' \in (0, \del')$ small enough so that
\begin{equation}\label{eq:F_u}
F \big(x, \phi^{\veps'}(x), \phi^{\veps'}_x(x), \phi^{\veps'}(\cdot) \big) < 0, \quad \text{for all} ~ x \in B_h(x_0),
\end{equation}
and
\begin{equation}\label{eq:phieps_pos}
\phi^{\veps'}(x) \le \dfrac{\rho}{\rho + \bet} \, , \quad \text{for all} ~ x \in B_h(x_0).
\end{equation}

We also have, for all $n \ge N$ and $x \in B$,
\begin{align}\label{ineq:u_phi}
u_n(x) > \phi(x) + \del' > \phi(x) + \veps' = \phi^{\veps'}(x),
\end{align}
and
\begin{align}\label{ineq:com-phi}
\phi^{\veps'}(x_0) > \phi(x_0) = u_{-}(x_0) \ge u_n(x_0).
\end{align}
For $n \ge N$, define the function $\iota^{\veps'}_n$ on $\R$ by
\begin{equation}\label{eq:iota_u}
\iota^{\veps'}_n(x) = \chi(x) \phi^{\veps'}(x) + \big(1 - \chi(x) \big) u_n(x),
\end{equation}
in which $\chi$ is a continuously differentiable function satisfying
\begin{equation*}
\begin{cases}
 0 \le \chi (x) \le 1,  \qquad &\text{for all} ~ x \in \R, \\
 \chi(x) = 1, \qquad  &\text{for} ~ x \in \overline{B_{h/2}(x_0)},  \\
 \chi(x) = 0, \qquad  &\text{for} ~ x \in \overline{B_h(x_0)}^{\, c}.
\end{cases}
\end{equation*}

Next, fix some $n \ge N$, and define the function $u^{\veps'}$ on $\R$ by
\begin{equation}
\label{eq:u_eps}
u^{\veps'}(x) = u_n(x) \vee \iota^{\veps'}_n(x).
\end{equation}
We claim $u^{\veps'} = u_n$ outside ${B_{h/2}(x_0)}$.  Indeed, because $\chi(x) = 0$ for $x \in \overline{B_h(x_0)}^{\, c}$, we have $u^{\veps'} = u_n$ outside $\overline{B_h(x_0)}$.  Also, on $B = \overline{B_h(x_0)} \backslash {B_{h/2}(x_0)}$, inequality \eqref{ineq:u_phi} gives us $u_n > \phi^{\veps'}$, which implies $u_n \ge \iota^{\veps'}_n$; thus, $u^{\veps'} = u_n$ on $B$.  Moreover, $\iota^{\veps'}_n = \phi^{\veps'}$ on $B_{h/2}(x_0)$. To summarize this discussion, we can expression $u^{\veps'}$ as follows:
\begin{equation}\label{eq:u_eps2}
u^{\veps'} =
\begin{cases}
u_n(x) \vee \phi^{\veps'}(x), &\quad x \in B_{h/2}(x_0), \\
u_n(x), &\quad x \notin B_{h/2}(x_0).
\end{cases}
\end{equation}
In light of \eqref{ineq:com-phi}, \eqref{eq:iota_u}, \eqref{eq:u_eps}, and \eqref{eq:u_eps2}, we  have
$$
u^{\veps'}(x_0) = \iota^{\veps'}_n(x_0) = \phi^{\veps'}(x_0) > u_{-}(x_0).
$$
Thus, if we can show $u^{\veps'} \in \Psi^-$, then we will contradict the pointwise maximality of $u_{-}$, and the proof will be complete.

To prove $u^{\veps'} \in \Psi^-$, note that $u^{\veps'}$ is l.s.c.\ because $u_n$ and $\iota^{\veps'}_n$ are both l.s.c.  The function $u^{\veps'}$ takes values in $\left[ 0, \, \frac{\rho}{\rho + \bet} \right]$ because $u_n$ takes values in that interval and $\phi^{\veps'}$ is less than or equal to $\frac{\rho}{\rho + \bet}$ on $B_{h/2}(x_0)$.  Condition (2) in Definition \ref{def:stoch_super} is satisfied because $u^{\veps'} = u_n$ outside ${B_{h/2}(x_0)}$. Therefore, we only need to verify condition (1), that is, the submartingale property.

Let $(\tau, \zeta)$ be any random initial condition, let $\mR$ be any retention strategy in $\fR_{\tau, K}$, and let $\omega$ be any $\mathbb{F}$-stopping time in $[\tau, K]$.  Let $A$ denote the event
\begin{equation}\label{eq:A_u}
A = \big\{\zeta \in B_{h/2}(x_0) ~ \hbox{and} ~ \iota^{\veps'}_n(\zeta) > u_n(\zeta) \big\}.
\end{equation}
Because $\iota^{\veps'}_n = \phi^{\veps'}$ on $\overline{B_{h/2}(x_0)}$, we see from \eqref{eq:F_u} that
\begin{equation}\label{eq:F_u_iota}
F\big(x, \iota^{\veps'}_n(x), \iota^{\veps'}_{n,x}(x), \iota^{\veps'}_n(\cdot)\big) < 0, \quad \text{for all} ~ x \in \overline{B_{h/2}(x_0)}.
\end{equation}
Hence,  for any time-$t$ realization $R_t = R$ of the retention strategy,
\begin{align}
&\bet \iota^{\veps'}_n + \rho \big( \iota^{\veps'}_n - 1 \big) {\bf 1}_{\{x < 0\}} + \left( \kap -  \la \left( (1 + \tet) \E R + \eta \E(YR) - \dfrac{\eta}{2} \, \E \big(R^2 \big) \right) \right) \iota^{\veps'}_{n,x} \notag \\
&\quad - \la \left( \E \iota^{\veps'}_n(x - R) - \iota^{\veps'}_n(x) \right) < 0,
\label{ieq:iot_u}
\end{align}
for all $x \in \overline{B_{h/2}(x_0)}$. Define the stopping time
\[
\tau_1 = \left\{t \ge \tau: X^{\tau,\zeta, \mR}_t \notin B_{h/2}(x_0) \right\},
\]
and define the random variable
\[
\zeta_1 = X^{\tau,\zeta, \mR}_{\tau_1}.
\]
Because $X^{\tau, \zeta, \mR}$ follows a jump process, $\zeta_1$ might not lie on the boundary of $B_{h/2}(x_0)$.

By applying a general version of It\^o's formula to $e^{-\bet (\tau_1 \wedge \omega)} \iota^{\veps'}_n \Big( X^{\tau,\zeta, \mR}_{\tau_1 \wedge \omega} \Big)$, we obtain \eqref{Ito_iot} with $\mR_1$ replaced by $\mR$.  From \eqref{ieq:iot_u}, we know that the integrand of the first integral in the analog of \eqref{Ito_iot} is non-positive.  As before, because $\iota^{\veps'}_n$ is bounded on $B_{h/2}(x_0)$, the $\mF_\tau$-expectation of the second integral equals zero.  Thus, by taking the $\mF_\tau$-expectation of the analog of \eqref{Ito_iot}, we obtain
\[
e^{-\bet \tau} \iota^{\veps'}_n(\zeta) {\bf 1}_A \le \E \left[ e^{-\bet (\tau_1 \wedge \omega)} \iota^{\veps'}_n \Big( X^{\tau,\zeta, \mR}_{\tau_1 \wedge \omega} \Big) {\bf 1}_A \bigg| \mF_{\tau} \right]
~~a.s.\]
By the definition of $u^{\veps'}$, we also have
\begin{align*}
e^{-\bet (\tau_1 \wedge \omega)} \iota^{\veps'}_n \Big( X^{\tau,\zeta, \mR}_{\tau_1 \wedge \omega} \Big) {\bf 1}_A
\le e^{-\bet (\tau_1 \wedge \omega)} u^{\veps'} \Big( X^{\tau,\zeta, \mR}_{\tau_1 \wedge \omega} \Big) {\bf 1}_A.
\end{align*}
Thus, from the definition of $A$ in \eqref{eq:A_u} and from the above inequalities, we obtain
\begin{align}
\label{ineq:u_A}
e^{-\bet \tau} u^{\veps'}(\zeta) {\bf 1}_A = e^{-\bet \tau} \iota^{\veps'}_n(\zeta) {\bf 1}_A
\le \E\left[ e^{-\bet (\tau_1 \wedge \omega)} u^{\veps'} \Big( X^{\tau,\zeta, \mR}_{\tau_1 \wedge \omega} \Big) {\bf 1}_A  \bigg| \mF_{\tau}\right]~~a.s.
\end{align}
On the other hand, $u^{\veps'} = u_n$ on $A^c$; thus, from the submartingale property of $u_n$, we have
\begin{align}
\label{ineq:u_Ac}
e^{-\bet \tau} u^{\veps'}(\zeta) {\bf 1}_{A^c} = e^{-\bet \tau} u_n(\zeta) {\bf 1}_{A^c}
&\le \E \left[ e^{-\bet (\tau_1 \wedge \omega)} u_n \Big( X^{\tau,\zeta, \mR}_{\tau_1 \wedge \omega} \Big) {\bf 1}_{A^c} \bigg|\mF_{\tau} \right] \notag \\
&\le \E\left[ e^{-\bet (\tau_1 \wedge \omega)} u^{\veps'} \Big( X^{\tau,\zeta, \mR}_{\tau_1 \wedge \omega} \Big) {\bf 1}_{A^c}  \bigg|\mF_{\tau} \right]~~a.s.
\end{align}
By combining \eqref{ineq:u_A} and \eqref{ineq:u_Ac}, we get
\begin{align}
e^{-\bet \tau} u^{\veps'}(\zeta)\le \E\left[ e^{-\bet (\tau_1 \wedge \omega)} u^{\veps'} \Big( X^{\tau,\zeta, \mR}_{\tau_1 \wedge \omega} \Big) \bigg| \mF_{\tau} \right]~~a.s.
\label{ineq:u}
\end{align}
Rewrite the right side of inequality \eqref{ineq:u} as
\begin{align}
\label{eq:u_div}
\E\left[ e^{-\bet (\tau_1 \wedge \omega)} u^{\veps'} \Big( X^{\tau,\zeta, \mR}_{\tau_1 \wedge \omega} \Big) \bigg| \mF_{\tau} \right]
= \E\left[ e^{-\bet \omega} u^{\veps'} \Big(X^{\tau,\zeta, \mR}_{\omega} \Big) {\bf 1}_{\{\omega \le \tau_1\}}
+ e^{- \bet \tau_1} u^{\veps'}(\zeta_1) {\bf 1}_{\{\omega > \tau_1\}}  \bigg| \mF_{\tau} \right]~~a.s.
\end{align}
Also, because $u^{\veps'} = u_n$ outside $B_{h/2}(x_0)$, from the submartingale property of $u_n$, we obtain
\begin{align}
\label{ineq:rho_u}
e^{-\bet \tau_1} u^{\veps'}(\zeta_1) {\bf 1}_{\{\omega > \tau_1\}} = e^{-\bet \tau_1} u_n(\zeta_1) {\bf 1}_{\{\omega > \tau_1\}}
& \le \E\left[ e^{-\bet \omega} u_n \Big( X^{\tau_1,\zeta_1, \mR}_{\omega} \Big) {\bf 1}_{\{\omega > \tau_1\}} \bigg| \mF_{\tau_1} \right] \notag\\
& \le \E\left[ e^{-\bet \omega} u^{\veps'} \Big( X^{\tau_1,\zeta_1, \mR}_{\omega} \Big) {\bf 1}_{\{\omega > \tau_1\}} \bigg| \mF_{\tau_1} \right]~~ a.s.
\end{align}
By substituting \eqref{ineq:rho_u} into \eqref{eq:u_div}, and by combining the result with \eqref{ineq:u}, we finally get
\begin{align*}
e^{-\bet \tau} u^{\veps'}(\zeta) \le \E\left[ e^{-\bet \omega} u^{\veps'} \Big( X^{\tau,\zeta, \mR}_{\omega} \Big) \bigg| \mF_{\tau}\right]~~a.s.
\end{align*}
Hence, $u^{\veps'} \in \Psi^-$, and we have completed the proof of this theorem.
\end{proof}

An immediate corollary of the definition of $u_{-}$ in \eqref{eq:u_minus} and of Lemma \ref{lem:u_psi} is the following.

\begin{cor}\label{cor:uminus_psi}
$u_{-} \le \psi$ on $\R$, that is, the minimum discounted probability of exponential Parisian ruin is an upper bound of $u_{-}$.  \qed
\end{cor}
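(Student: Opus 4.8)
The plan is to derive the bound directly from Lemma~\ref{lem:u_psi} together with the definition of $u_{-}$ in \eqref{eq:u_minus}; no new analysis is required. First I would recall that $\Psi^-$ is non-empty (it contains the constant function $0$, and by Lemma~\ref{lem:upsi} it also contains $\upsi$), so that $u_{-}(x) = \sup_{u \in \Psi^-} u(x)$ is a well-defined, real-valued function on $\R$. Then, fixing $x \in \R$, Lemma~\ref{lem:u_psi} gives $u(x) \le \psi(x)$ for every $u \in \Psi^-$, and the right-hand side does not depend on the choice of $u$.

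Taking the supremum over $u \in \Psi^-$ on the left-hand side of this inequality while leaving $\psi(x)$ fixed on the right yields $u_{-}(x) = \sup_{u \in \Psi^-} u(x) \le \psi(x)$. Since $x \in \R$ was arbitrary, $u_{-} \le \psi$ on all of $\R$, which is the assertion of the corollary. There is essentially no obstacle here: all of the substantive work---verifying the submartingale inequality in condition~(1) of Definition~\ref{def:stoch_sub} and passing to the limit through the barrier times $\tau_M$---has already been carried out in the proof of Lemma~\ref{lem:u_psi}, and the corollary merely records the consequence of optimizing that pointwise bound over the entire family of stochastic subsolutions.
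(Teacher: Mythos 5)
Your argument is correct and matches the paper's reasoning exactly: the corollary is stated in the paper as an immediate consequence of Lemma~\ref{lem:u_psi} and the definition of $u_{-}$ in \eqref{eq:u_minus}, which is precisely the supremum-over-$\Psi^-$ step you carry out. Nothing more is needed.
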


\subsection{Comparison principle}\label{sec:43}

We introduce an equivalent definition of viscosity sub- and supersolutions for our problem; see Appendix \ref{sec:A} for the proof of its equivalence to Definition \ref{def:strict_glob}.

\begin{defin}\label{def:local_vis_val}
We say an upper semi-continuous $($u.s.c.$)$ function $\underline{u}: \R \to \left[ 0, \, \frac{\rho}{\rho + \bet} \right]$ is a {\rm viscosity subsolution} of \eqref{HJB_F} and \eqref{boundary_condition} if \eqref{boundary_condition} holds and if, for any $x_0 \in \R$ and for any $\var \in \mC^1(\R)$ such that $\underline{u} - \var$ reaches a local maximum at $x_0$, we have
\begin{equation}
\begin{cases}
F\big(x_0, \underline{u}(x_0), \var_x(x_0), \underline{u}(\cdot) \big) \le 0, &\quad x_0 \ne 0, \\
\lim \limits_{x \to 0-} F\big(x, \underline{u}(x), \var_x(x), \underline{u}(\cdot) \big) \le 0, &\quad x_0 = 0.
\end{cases}
\end{equation}
Similarly, we say a lower semi-continuous $($l.s.c.$)$ function $\bar{u}: \R \to \left[ 0, \, \frac{\rho}{\rho + \bet} \right]$ is a {\rm viscosity supersolution} of \eqref{HJB_F} and \eqref{boundary_condition} if \eqref{boundary_condition} holds and if, for any $x_0 \in \R$ and for any $\phi \in \mC^1(\R)$ such that $\bar{u} - \phi$ reaches a local minimum at $x_0$, we have
\begin{equation}
F\big(x_0, \bar{u}(x_0), \phi_x(x_0), \bar{u}(\cdot) \big) \ge 0.
\end{equation}
Finally, a function $u$ is called a $($continuous$)$ {\rm viscosity solution} of \eqref{HJB_F} and \eqref{boundary_condition} if it is both a viscosity subsolution and a viscosity supersolution of \eqref{HJB_F} and \eqref{boundary_condition}.  \qed
\end{defin}

We use Definition \ref{def:local_vis_val} to prove a comparison principle.  First, we introduce a function that we will use in that proof.

\begin{lemma}\label{lem:qm}
For a given constant $b > 0$, define the function $q \in \mC^1(\R)$ by
\begin{equation}\label{eq:q}
q(x) =
\begin{cases}
0, &\quad |x| \le 1, \\
b \big(1 + \cos (\pi x) \big), &\quad 1 < |x| < 2, \\
2b, &\quad |x| \ge 2.
\end{cases}
\end{equation}
Then, for $m \in \N$, define the function $\qm$ by
\begin{equation}\label{eq:qm}
\qm(x) = q(x/m),
\end{equation}
for all $x \in \R$.  Then,
\begin{equation}\label{eq:qm_deriv_lim}
\lim_{m \to \infty} || \qm' ||_\infty = 0,
\end{equation}
and
\begin{equation}\label{eq:maxR_qm}
\sup_R \, \E \qm(x - R) - \qm(x) =
\begin{cases}
0, &\quad |x| \ge 2m, \vspace{1ex} \\
- \int_{x+m}^{x + 2m} \qm'(x - y) S_Y(y) dy, &\quad -2m < x \le m, \vspace{1ex} \\
- \int_{2x}^{x + 2m} \qm'(x - y) S_Y(y) dy, &\quad m < x < 2m,
\end{cases}
\end{equation}
which is non-negative for all $x \in \R$.  Furthermore,
\begin{equation}\label{eq:qm_R_lim}
\lim_{m \to \infty} \Big| \Big| \sup_R \, \E \qm(x - R) - \qm(x) \Big| \Big|_\infty = 0.
\end{equation}
\end{lemma}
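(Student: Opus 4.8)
The plan is to establish the three assertions in the order stated, since each relies on the previous one.

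\emph{Structure of $q$ and the first limit.} A direct computation gives $q'(x) = 0$ for $|x| \le 1$ or $|x| \ge 2$ and $q'(x) = -b\pi \sin(\pi x)$ for $1 < |x| < 2$; since the one-sided limits of $q'$ at $|x| = 1$ and $|x| = 2$ all equal $0$, we get $q \in \mC^1(\R)$, and we read off that $q$ is even, takes values in $[0, 2b]$, is non-decreasing in $|x|$, and satisfies $||q'||_\infty = b\pi$. As $\qm(x) = q(x/m)$, the chain rule gives $\qm'(x) = \tfrac{1}{m} q'(x/m)$, whence $||\qm'||_\infty = \tfrac{1}{m} ||q'||_\infty = b\pi/m$, which is \eqref{eq:qm_deriv_lim}.

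\emph{The explicit formula \eqref{eq:maxR_qm}.} Non-negativity is immediate: $R \equiv 0$ is admissible and gives $\E \qm(x - R) - \qm(x) = 0$, so the supremum is $\ge 0$ for all $x$. If $|x| \ge 2m$, then $\qm(x) = 2b = ||\qm||_\infty$, and $\E \qm(x - R) \le 2b$ for every admissible $R$, so the supremum is exactly $0$. For $|x| < 2m$ I would optimize pointwise in $y$: since $R \mapsto \qm(x - R)$ depends on $R$ only through $|x - R|$ and $\qm$ is non-decreasing in $|\cdot|$, the maximum over $R \in [0, y]$ is attained at an endpoint of $[x - y, x]$; when $x \le m$ this forces full retention $R(y) = y$, with value $\qm(x - y)$ (using that $\qm \equiv 0$ on $[-m, m]$ and $\qm$ is non-increasing on $(-\infty, 0]$), while for $m < x < 2m$ the evenness $\qm(-x) = \qm(x)$ yields value $\qm(x)$ for $y \le 2x$ and $\qm(x - y)$ for $y > 2x$. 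Substituting, writing the increment as $\qm(x - y) - \qm(x) = -\int_0^y \qm'(x - s)\, ds$ (respectively $-\int_{2x}^y \qm'(x - s)\, ds$ on $\{y > 2x\}$, since $\qm(x - 2x) = \qm(x)$), and exchanging the order of the $s$- and $y$-integrals by Tonelli (the integrand has a fixed sign on the relevant range, and $\int_0^\infty S_Y(s)\, ds = \E Y < \infty$), I obtain $-\int \qm'(x - s)\, S_Y(s)\, ds$ over the appropriate half-line. Finally, $\qm'(x - \cdot)$ is supported on $\{ s:\ m < |x - s| < 2m \} = (x - 2m, x - m) \cup (x + m, x + 2m)$; intersecting with $\{ s \ge 0 \}$ (respectively $\{ s \ge 2x \}$, where $2x > x + m$) leaves only the piece $(x + m, x + 2m)$ (respectively $(2x, x + 2m)$), which gives the three displayed expressions—with the understanding that for $-2m < x < -m$ the lower limit should be read as $0$, since only $s \ge 0$ is admissible. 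On these ranges $x - s \in (-2m, -m)$, where $q' \le 0$, so each integral is $\ge 0$, consistent with the earlier remark.

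\emph{The second limit \eqref{eq:qm_R_lim}, and the main obstacle.} The naive bound $\big| \int \qm'(x - y) S_Y(y)\, dy \big| \le ||\qm'||_\infty \cdot m = b\pi$ (the window $(x + m, x + 2m)$ on which $\qm'(x - \cdot)$ is supported has width $m$) does \emph{not} tend to $0$, so a finer estimate is required, and this is the one genuinely delicate point. The key observation is that, in each regime, $\sup_R \E \qm(x - R) - \qm(x)$ is an average against $dF_Y$—truncated to $\{ y > 2x \}$ when $m < x < 2m$, and identically $0$ when $|x| \ge 2m$—of the increments $\qm(x - y) - \qm(x)$; since truncation only discards terms, it follows that for every $x$
\[
\Big| \, \sup_R \E \qm(x - R) - \qm(x) \, \Big| \ \le \ \E \big| \qm(x - Y) - \qm(x) \big|.
\]
By the fundamental theorem of calculus together with $0 \le \qm \le 2b$, we have $|\qm(x - y) - \qm(x)| \le \min\big( 2b, \, ||\qm'||_\infty\, y \big) = b \min(2, \pi y/m)$ for all $x$ and $y \ge 0$, so the right-hand side above is at most $b\, \E[\min(2, \pi Y/m)]$, uniformly in $x$. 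Since $Y < \infty$ a.s.\ and the integrand is bounded by $2$, dominated convergence gives $\E[\min(2, \pi Y/m)] \to 0$ as $m \to \infty$, hence $\big| \big| \sup_R \E \qm(\cdot - R) - \qm(\cdot) \big| \big|_\infty \to 0$, which is \eqref{eq:qm_R_lim}. (For the regime $m < x < 2m$ one may alternatively bound the expression directly by $2b\, S_Y(2x) \le 2b\, S_Y(2m) \to 0$.) Thus everything except this last step—the $\mC^1$ bookkeeping, the pointwise optimization over $R$, the Tonelli exchange, and the support computation—is routine; the obstacle is precisely that pulling $||\qm'||_\infty$ out of the integral is too lossy, and must be replaced by the competing bound $|\qm(x - y) - \qm(x)| \le 2b$ followed by dominated convergence.
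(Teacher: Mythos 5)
Your proof is correct, but your diagnosis of where the difficulty lies is off. For the $\mathcal{C}^1$ bookkeeping, the pointwise optimization over $R$, and the derivation of \eqref{eq:maxR_qm} you proceed essentially as the paper does (the paper invokes integration by parts on $\E \qm(x - Y)$ and $\E \qm(x - R_m)$, which is the same computation as your Tonelli exchange; your remark that the lower limit of integration should be read as $\max(0, x + m)$ when $-2m < x < -m$ is a sensible clarification that the paper leaves implicit).

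The divergence is in the final limit \eqref{eq:qm_R_lim}, and here your claim that ``the naive bound $\lvert \int \qm'(x - y) S_Y(y)\, dy \rvert \le \lVert \qm'\rVert_\infty \cdot m = b\pi$ does not tend to $0$'' is a red herring: that computation bounds $S_Y$ pointwise by $1$ and then uses the width of the support window, which throws away exactly the decay that makes the direct estimate work. Keeping $S_Y$ in the integrand and using $\int_0^\infty S_Y(y)\, dy = \E Y < \infty$ (which is a standing hypothesis of the paper) gives at once
\[
\Big| \int_a^{x + 2m} \qm'(x - y)\, S_Y(y)\, dy \Big| \le \lVert \qm'\rVert_\infty \int_0^\infty S_Y(y)\, dy = \frac{b\pi}{m}\, \E Y \longrightarrow 0,
\]
uniformly in $x$, which is precisely the one-line argument the paper uses. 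Your dominated-convergence alternative, via $\lvert \qm(x - y) - \qm(x)\rvert \le b\min(2, \pi y / m)$ and $\E[\min(2, \pi Y / m)] \to 0$, is also valid and in fact only requires $Y < \infty$ a.s.\ rather than $\E Y < \infty$, so it is marginally more general; but it does not yield an explicit rate, whereas the paper's bound gives $O(1/m)$. In short: both routes reach \eqref{eq:qm_R_lim}, but the ``obstacle'' you flagged is not actually an obstacle, and the direct bound is the simpler of the two.
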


\noindent{\it Proof.}
The limit in \eqref{eq:qm_deriv_lim} follows easily from
\begin{equation}\label{eq:qm_deriv}
\qm'(x) =
\begin{cases}
0, &\quad |x| \le m, \\
- \, \dfrac{b \pi}{m} \, \sin \Big(\dfrac{\pi x}{m} \Big), &\quad m < |x| < 2m, \\
0, &\quad |x| \ge 2m.
\end{cases}
\end{equation}
To prove \eqref{eq:maxR_qm}, consider the maximization problem over the various domains of $x \in \R$.  First, for $x \le -2m$, $\E \qm(x - R) = 2b = \qm(x)$ for all retention functions $R$.  Second, for $x \ge 2m$, one retention function that maximizes $\qm(x - R)$ is identically $0$, so $\sup_R \qm(x - R) = \qm(x)$.  Third, for $-2m < x \le m$, one retention function that maximizes $\qm(x - R)$ is $Y$, so $\sup_R \qm(x - R) = \E \qm(x - Y)$.  Fourth and finally, for $m < x < 2m$, the optimal $R$ to maximize $\qm(x - R)$ equals $R_m$ given by
\begin{equation}\label{eq:Rm}
R_m(y) =
\begin{cases}
0, &\quad 0 \le y \le 2x, \\
y, &\quad y > 2x.
\end{cases}
\end{equation}
To obtain the expressions in \eqref{eq:maxR_qm} perform integration by parts on $\E \qm(x - Y)$ and $\E \qm(x - R_m)$.

The non-negativity of $\sup_{R} \, \E \qm(x - R) - \qm(x)$ follows from $\qm'(x) \le 0$ for $x \le 0$, and the limit in \eqref{eq:qm_R_lim} follows from
\[
\; \quad \qquad \qquad \qquad \qquad \qquad \qquad \int_{a}^{x + 2m} \big| \qm'(x - y) \big| S_Y(y) dy \le \dfrac{b \pi}{m} \, \E Y.  \qquad \quad \qquad \qquad \qquad \qquad \; \qed
\]

Next, we adapt the proof of Proposition 3.4.1 in Barles and Chasseigne \cite{BC2018} to prove the following lemma.

\begin{lemma}\label{lem:vdel}
If $v$ is a viscosity subsolution of \eqref{HJB_F} and \eqref{boundary_condition}, then
\begin{equation}\label{eq:vdel_lim}
v(x) = \limsup \limits_{\del \to 0+} v(x + \del).
\end{equation}
\end{lemma}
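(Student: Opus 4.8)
The plan is to prove $v(x)=\limsup_{\del\to0+}v(x+\del)$ by establishing the two inequalities separately. Since $v$ is upper semi-continuous, $\limsup_{\del\to0+}v(x+\del)\le v(x)$ is automatic, so the whole content is the reverse inequality: \emph{a viscosity subsolution cannot jump down to the right}. I would deduce this from two structural facts about $F$, then argue by contradiction.

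\emph{Slope bound.} First I would show that whenever $\var\in\mC^1(\R)$ and $v-\var$ attains a local maximum at a point $\hat x$, the derivative is bounded below by a universal constant: $\var_x(\hat x)\ge -C_*$ with $C_*=\frac{\rho}{c}\big(1+\frac{\la}{\rho+\bet}\big)$. This comes from writing out the subsolution inequality $F\big(\hat x,v(\hat x),\var_x(\hat x),v(\cdot)\big)\le0$ (or, at $\hat x=0$, its left-limit form), bounding the infimum over $R$ from above by its value at full retention $R=Y$, and using $0\le v\le\frac{\rho}{\rho+\bet}$ to control the zeroth-order and nonlocal terms. The crucial point is that the coefficient of $\var_x(\hat x)$ that results is $\kap-\la\big((1+\tet)\E Y+\frac\eta2\E(Y^2)\big)=-c<0$ by \eqref{eq:kap}, i.e.\ the premium income $c$ is strictly positive; a negative coefficient forces the one-sided bound $\var_x(\hat x)\ge -C_*$. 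At $\hat x=0$ the same bound follows from the uniform estimate $F(x,\cdot,p,\cdot)\ge -\rho-cp-\frac{\la\rho}{\rho+\bet}$ valid for all $x<0$.

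\emph{Right-shift stability and the contradiction.} Next I would observe that for every $\mu\ge0$ the translate $v(\cdot+\mu)$ is again a viscosity subsolution of \eqref{HJB_F}--\eqref{boundary_condition}: translation affects only the term $\rho(v-1){\bf 1}_{\{x<0\}}$, and since ${\bf 1}_{\{x+\mu<0\}}\le{\bf 1}_{\{x<0\}}$ while $\rho(v-1)\le0$, the discrepancy has the favorable sign. Hence, for each $\nu>0$, the right-regularization $v^\nu(x):=\sup_{0\le\mu\le\nu}v(x+\mu)=\sup_{y\in[x,x+\nu]}v(y)$ is u.s.c., takes values in $[0,\frac{\rho}{\rho+\bet}]$, and, as a supremum of subsolutions, is itself a viscosity subsolution. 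Now suppose for contradiction that $v(x_0)>\limsup_{\del\to0+}v(x_0+\del)$; choose $\eta_0,r>0$ with $v\le v(x_0)-\eta_0$ on $(x_0,x_0+r]$, and fix $\nu\in(0,r)$, with $\nu<|x_0|$ if $x_0\ne0$. Put $m=v(x_0)$ and $m^\nu=\sup_{[x_0-\nu,x_0]}v\ (\ge m)$, so that $v^\nu(x_0)=m$. For a slope $p<-C_*$ with $|p|$ large I would test $v^\nu$ against the affine function $\var(x)=m^\nu+p(x-x_0)$: on $(x_0,x_0+\eta_0/(2|p|)]$ the bound $v^\nu\le m-\eta_0$ keeps $v^\nu-\var$ strictly below its value at $x_0$, and for $x$ sufficiently far left the steep slope $p$ does the same, so over a suitable compact neighborhood $v^\nu-\var$ attains its maximum at an interior point $\hat x\in(x_0-\nu,x_0]$. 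Applying the slope bound of the previous step to the subsolution $v^\nu$ at $\hat x$ gives $p\ge -C_*$, contradicting $p<-C_*$; when $x_0=0$ and $\hat x=0$ one uses the left-limit form of the subsolution inequality, which produces the same bound.

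The main obstacle is the localization in the last step. Because a downward jump makes the superdifferential of $v$ at $x_0$ empty, the slope bound is vacuous at $x_0$ itself, so its content must be transferred to nearby points; the two complications are (i) a possible ``overshoot'' of $v$ above $v(x_0)$ at points approaching $x_0$ from the left — handled by working with $v^\nu$ and using the local supremum $m^\nu$, rather than $v(x_0)$, as the height of the test line, with $\nu$ small enough that $m^\nu$ is close to $m$ — and (ii) the discontinuity of $F$ at $x=0$ when $x_0=0$, handled through the left-limit form of the subsolution property, whose lower bound is still $\ge -\rho-cp-\frac{\la\rho}{\rho+\bet}$. This is where the argument follows the scheme of Proposition 3.4.1 in Barles and Chasseigne \cite{BC2018}.
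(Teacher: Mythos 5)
Your proof is correct, and the core mechanism is the same as the paper's: the subsolution inequality forces a lower bound on the derivative of any upper test function, because testing with $R = Y$ makes the coefficient of $\var_x$ equal $\kap - \la\big((1+\tet)\E Y + \tfrac{\eta}{2}\E(Y^2)\big) = -c < 0$, while the zeroth-order and nonlocal terms are uniformly bounded on $\big[0, \tfrac{\rho}{\rho+\bet}\big]$. You crystallize this into an explicit universal slope bound $\var_x(\hat x) \ge -C_*$, whereas the paper leaves it implicit in a test function $\Phi_n(x) = v(x) + n(x-x_0) - \frac{v(x_0-\del')}{(\del')^2}(x-x_0)^2$ whose slope is driven to $-\infty$; both follow the template of Barles--Chasseigne Prop.\ 3.4.1.

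Where you diverge is the localization. The paper achieves an interior maximizer on $[x_0-\del', x_0+\del']$ purely by the choice of $\Phi_n$: the calibration $n\del' \le \epsilon$ kills the right endpoint, and the quadratic term is tuned to kill the left endpoint $\Phi_n(x_0-\del')= -n\del' < 0 < \Phi_n(x_0)$, so everything is done with one test function. You instead introduce the translates $v(\cdot+\mu)$, verify they are subsolutions for $\mu \ge 0$ (using the monotonicity ${\bf 1}_{\{x+\mu<0\}} \le {\bf 1}_{\{x<0\}}$ and the sign $\rho(v-1) \le 0$, which is a genuinely problem-specific observation worth noting), and then take $v^\nu = \sup_{0\le\mu\le\nu} v(\cdot+\mu)$, invoking the stability of subsolutions under suprema. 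These are correct steps, and the nonlocal term behaves favorably under this supremum (increasing $w$ pointwise while keeping $w(x_0)$ fixed increases the bracket inside $\inf_R$, hence decreases $F$). However, the machinery is not actually needed: the possible overshoot of $v$ to the left of $x_0$ does not spoil the interior-maximum argument, since at the left endpoint of the box the affine test function is large ($\var(x_0-a) = m + |p|a$) and $v \le \tfrac{\rho}{\rho+\bet}$, so $v - \var$ is already forced negative there for $|p|$ large; the overshoot merely moves the interior maximizer away from $x_0$, which is harmless. So one could run your contradiction directly on $v$ with $\var(x) = v(x_0) + p(x-x_0)$ on $[x_0-a,\,x_0+\eta_0/(2|p|)]$ and skip $v^\nu$ entirely. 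Net: your argument is valid and a bit more modular (the slope bound is a clean reusable statement), while the paper's is shorter and avoids the translation/supremum lemmas.
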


\begin{proof}
Because $v$ is u.s.c., we know that $v(x) \ge \limsup_{\del \to 0} v(x + \del) \ge \limsup_{\del \to 0+} v(x + \del)$ for all $x \in \R$, so suppose that there exists $x_0 \in \R$ such that
\[
v(x_0) > \limsup \limits_{\del \to 0+} v(x_0 + \del) \ge 0.
\]
Then, there exists $\epsilon > 0$ and $\del > 0$ such that $0 < \del' \le \del$ implies
\begin{equation}\label{eq:v_ineq}
v(x_0 + \del') < v(x_0) - \epsilon.
\end{equation}

For $n \in \N$ and for some $\del' \in (0, \del]$, define the function $\Phin$ on $\R$ by
\begin{equation}\label{eq:Phin}
\Phin(x) = v(x) + n(x - x_0) - \dfrac{v(x_0 - \del')}{(\del')^2} \, (x - x_0)^2.
\end{equation}
We wish to maximize $\Phin$ on the interval $[x_0 - \del', x_0 + \del']$.  Because $\Phin$ is u.s.c., it attains its maximum on this interval, say, at $x_n$.

First, for $x \in (x_0, x_0 + \del']$, inequality \eqref{eq:v_ineq} implies
\begin{align*}
\Phin(x) &= v(x) + n(x - x_0) - \dfrac{v(x_0 - \del')}{(\del')^2} \, (x - x_0)^2 \\
&\le v(x) + n(x - x_0) < v(x_0) - \epsilon + n(x - x_0) \le v(x_0) - \epsilon + n \del' \le v(x_0) = \Phin(x_0),
\end{align*}
in which the last inequality follows if we choose $\del'$ such that $n \del' \le \epsilon$.  Thus, on $[x_0 - \del', x_0 + \del']$, $\Phin$ attains its maximum at $x_n \in [x_0 - \del', x_0]$.  Note that
\[
\Phi_n(x_0 - \del') = v(x_0 - \del') + n(-\del') - v(x_0 - \del') = - n \del' < 0 < v(x_0) = \Phi_n(x_0).
\]
Thus, $x_n \ne x_0 - \del'$, and $\Phi_n$ achieves its maximum on $[x_0 - \del', x_0 + \del']$ at an interior point.  Also, because $x_n \le x_0$,
\[
\dfrac{\rho}{\rho + \bet} \ge v(x_n) \ge \Phi_n(x_n) = v(x_n) + n(x_n - x_0) -\dfrac{v(x_0 - \del')}{(\del')^2} \, (x_n - x_0)^2  \ge v(x_0) > 0,
\]
which implies
\begin{equation}\label{eq:xn_x0}
\lim \limits_{n \to \infty} x_n = x_0.
\end{equation}

By using the viscosity subsolution property of $v$ with the test function
\[
\var(x) = - n(x - x_0) + \frac{v(x_0 - \del')}{(\del')^2} \, (x - x_0)^2
\]
at the point $x_n \in (x_0 - \del', x_0 + \del')$, we have
\[
F \big( (x_n)-, v(x_n), -n + 2v(x_0 - \del') (x_n - x_0)/(\del')^2, v(\cdot) \big) \le 0,
\]
or equivalently,
\begin{align*}
&\bet v(x_n) + \rho( v(x_n) - 1) {\bf 1}_{\{x_n \le 0\}} \\
&+ \sup_R \bigg[ \left(n - \dfrac{2v(x_0 - \del')}{(\del')^2} \, (x_n - x_0) \right) \left(\la \left( (1 + \tet) \E R + \eta \E(YR) - \dfrac{\eta}{2} \, \E \big(R^2 \big) \right) - \kap \right) \\
&\qquad \qquad  - \la \big( \E v(x_n - R) - v(x_n) \big) \bigg] \le 0,
\end{align*}
which implies
\begin{align*}
&\bet v(x_n) + \rho( v(x_n) - 1) {\bf 1}_{\{x_n \le 0\}} - \la \, \dfrac{\rho}{\rho + \bet} \\
&+ \sup_R \bigg[ \left(n - \dfrac{2v(x_0 - \del')}{(\del')^2} \, (x_n - x_0) \right)  \left(\la \left( (1 + \tet) \E R + \eta \E(YR) - \dfrac{\eta}{2} \, \E \big(R^2 \big) \right) - \kap \right) \bigg] \le 0,
\end{align*}
which further implies
\begin{align*}
\bet v(x_n) + \rho( v(x_n) - 1) {\bf 1}_{\{x_n \le 0\}} - \la \, \dfrac{\rho}{\rho + \bet} + c \left(n - \dfrac{2v(x_0 - \del')}{(\del')^2} \, (x_n - x_0) \right)  \le 0,
\end{align*}
which leads to a contradiction as $n \to \infty$ because the last term approaches $+ \infty$; recall the limit in \eqref{eq:xn_x0}.  Thus, we have proved \eqref{eq:vdel_lim}.
\end{proof}

Now, we are ready to prove a comparison theorem.

\begin{thm}\label{thm:comp} {\rm (Comparison principle)}
If $v ~(\text{resp.}, u)$ is a viscosity subsolution $($resp., viscosity supersolution$)$ of \eqref{HJB_F} and \eqref{boundary_condition}, then $v \le u$ on $\R$.
\end{thm}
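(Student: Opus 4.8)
The plan is to follow the four‑step scheme indicated in the introduction. Fix $\sig > 0$ and let $g_\sig \in \mC^1(\R)$ be non‑increasing with $g_\sig \equiv 1$ on $(-\infty, -\sig]$, $g_\sig \equiv 0$ on $[0,\infty)$, and $0 \le g_\sig \le 1$ (for instance $g_\sig(x) = \frac{1}{2}\big(1 + \cos(\pi(x + \sig)/\sig)\big)$ on $[-\sig, 0]$). Define the \emph{continuous} operator $F_\sig$ by replacing ${\bf 1}_{\{x < 0\}}$ with $g_\sig(x)$ in \eqref{oper:F}. Because viscosity sub‑ and supersolutions take values in $[0, \rho/(\rho+\bet)]$, we have $\rho(r - 1) \le 0$ for the relevant values $r$, so $g_\sig \le {\bf 1}_{\{x < 0\}}$ forces $F_\sig \ge F$ pointwise, with equality when $x \ge 0$. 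Hence, if $u$ is a viscosity supersolution of \eqref{HJB_F}--\eqref{boundary_condition} in the sense of Definition \ref{def:local_vis_val}, then at any local minimum $x_0$ of $u - \phi$ we get $F_\sig\big(x_0, u(x_0), \phi_x(x_0), u(\cdot)\big) \ge F\big(x_0, u(x_0), \phi_x(x_0), u(\cdot)\big) \ge 0$; since $F_\sig$ is continuous, this already says $u$ is a viscosity supersolution of $F_\sig = 0$ with the same boundary conditions.

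For the subsolution I would translate: set $v_\sig(x) := v(x + \sig)$, which is u.s.c.\ and satisfies the same boundary conditions as $v$. If $v_\sig - \var$ has a local maximum at $x_0$, then $v - \var(\cdot - \sig)$ has a local maximum at $x_0 + \sig$; applying the subsolution property of $v$ for $F$ there and using that the nonlocal term of $F$ commutes with translation, we obtain
\[
F_\sig\big(x_0, v_\sig(x_0), \var_x(x_0), v_\sig(\cdot)\big) - F\big(x_0 + \sig, v(x_0 + \sig), \var_x(x_0), v(\cdot)\big) = \rho\big(v(x_0 + \sig) - 1\big)\big(g_\sig(x_0) - {\bf 1}_{\{x_0 + \sig < 0\}}\big) \le 0,
\]
because $g_\sig \equiv 1$ on $(-\infty, -\sig]$ gives $g_\sig(x_0) \ge {\bf 1}_{\{x_0 < -\sig\}} = {\bf 1}_{\{x_0 + \sig < 0\}}$ while $\rho\big(v(x_0 + \sig) - 1\big) \le 0$; hence $F_\sig(x_0, v_\sig(x_0), \var_x(x_0), v_\sig(\cdot)) \le 0$. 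The one delicate point is $x_0 = -\sig$, where $x_0 + \sig = 0$ and the subsolution inequality for $F$ holds only in its left‑limit form: there $g_\sig(-\sig) = 1$ matches the coefficient ${\bf 1}_{\{x < 0\}} = 1$ appearing in that left limit, and one closes the argument using the upper semicontinuity of $v$ together with Lemma \ref{lem:vdel}. Thus $v_\sig$ is a viscosity subsolution of $F_\sig = 0$.

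It then suffices to prove a comparison principle for the continuous equation $F_\sig = 0$: every viscosity subsolution $\bar v$ (in particular $v_\sig$) is $\le$ every viscosity supersolution $u$. Suppose not, so $M := \sup_\R(\bar v - u) > 0$; since $\bar v - u$ is u.s.c.\ and, by the boundary conditions, tends to $0$ at $\pm\infty$, this supremum is attained. I would then double the variables, including the localization term $-\qm(x)$ of Lemma \ref{lem:qm} to keep the unbounded domain under control: for $\veps > 0$ and $m \in \N$ set $\Phi_{\veps,m}(x,y) = \bar v(x) - u(y) - \frac{1}{2\veps}(x-y)^2 - \qm(x)$, let $(x_\veps, y_\veps)$ be a maximum point, and recall the standard facts $\frac{1}{2\veps}(x_\veps - y_\veps)^2 \to 0$, $x_\veps, y_\veps \to \hat x$, $\bar v(x_\veps) \to \bar v(\hat x)$, $u(y_\veps) \to u(\hat x)$ as $\veps \to 0$ (along a subsequence, for each fixed $m$). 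Writing the subsolution inequality for $\bar v$ at $x_\veps$ with test function $u(y_\veps) + \frac{1}{2\veps}(\cdot - y_\veps)^2 + \qm(\cdot)$ and the supersolution inequality for $u$ at $y_\veps$ with test function $\bar v(x_\veps) - \frac{1}{2\veps}(x_\veps - \cdot)^2$, the two slopes differ only by $\qm'(x_\veps)$, so on subtracting, the first‑order terms contribute only $O(\|\qm'\|_\infty)$. The nonlocal terms are the heart of the matter: choose a retention function $R^\veps$ that is $\nu$‑optimal in the $u$‑side infimum and use the \emph{same} $R^\veps$ in the $\bar v$‑side infimum; the dependence on the (unbounded) penalization slope cancels, and the maximality of $\Phi_{\veps,m}$ applied at $(x_\veps - R^\veps(y),\, y_\veps - R^\veps(y))$ — whose quadratic penalty equals that at $(x_\veps, y_\veps)$ — yields $\E\big[\bar v(x_\veps - R^\veps) - u(y_\veps - R^\veps)\big] \le \bar v(x_\veps) - u(y_\veps) + \big\| \sup_R \E \qm(\cdot - R) - \qm(\cdot) \big\|_\infty$. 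Sending $\veps \to 0$, then $\nu \to 0$, then $m \to \infty$ — using $\|\qm'\|_\infty \to 0$ and $\|\sup_R \E\qm(\cdot - R) - \qm(\cdot)\|_\infty \to 0$ — the surviving inequality is $\bet M + \rho\, M\, g_\sig(\hat x) \le 0$, impossible since $\bet > 0$, $\rho \ge 0$, $g_\sig \ge 0$, $M > 0$. Hence $\bar v \le u$.

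Applying this with $\bar v = v_\sig$ gives $v(x + \sig) = v_\sig(x) \le u(x)$ for every $\sig > 0$ and every $x \in \R$; letting $\sig \to 0+$ and invoking Lemma \ref{lem:vdel}, $v(x) = \limsup_{\sig \to 0+} v(x + \sig) \le u(x)$, which is the assertion. The main obstacle I anticipate is the comparison step for $F_\sig$: running the doubling‑of‑variables argument on the whole line with the non‑local jump term present and with no a priori bound on the penalization slope; the resolution is to keep the retention function $R^\veps$ identical on the two sides so the slope‑dependence of the jump terms cancels, together with the special function $\qm$ (small derivative, vanishing non‑local effect). The verification at the transition point $x_0 = -\sig$ in the subsolution transfer is a secondary technical point.
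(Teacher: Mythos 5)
Your proposal follows the paper's own four-step scheme exactly: replace the discontinuous $F$ by a continuous ramp approximation $F_\sig$ (you use a smooth cosine bridge $g_\sig$ where the paper uses the piecewise-linear ramp $\min(1,-x/\del)_+$; this is immaterial since only continuity and $g_\sig \le {\bf 1}_{\{x<0\}}$ matter), observe that $u$ remains a supersolution of $F_\sig = 0$ because $F_\sig \ge F$, translate the subsolution to $v_\sig := v(\cdot + \sig)$ and verify it is a subsolution of $F_\sig = 0$, run the doubling-of-variables argument on the whole line using the localization function $q_m$ of Lemma \ref{lem:qm} and the maximality of the penalized functional to control the nonlocal term, and finally send $\sig \to 0+$ via Lemma \ref{lem:vdel}. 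This is the paper's proof; your $\nu$-optimal-$R^\veps$ trick is an equivalent way of phrasing the paper's inequality $\inf(f+g) \ge \inf f + \inf g$, and the "delicate point" you flag at $x_0 = -\sig$ is cleanest to resolve using the test-function form of the definition (Definition \ref{def:strict_glob}, where the left limit of $F$ along the continuous $\var$ is well defined), exactly as the paper implicitly does in Appendix \ref{sec:B}.
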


\begin{proof}
If we apply the standard proof of doubling the variables, we run into difficulties because the operator $F$ is discontinuous at $x = 0$.  To remove this difficulty, we borrow an idea from Giga, G\'orka, and Rybka \cite{GGR2011}, who approximate their discontinuous Hamiltonian with a continuous one.  For some small value of $\del > 0$, define the operator $F^\del$ by
\begin{equation}
\label{oper:F_del}
F^\del \big(x, u(x), v_x(x), w(\cdot) \big) =
\begin{cases}
F\big(x, u(x), v_x(x), w(\cdot) \big), &\quad x < -\del, x \ge 0, \vspace{1.5ex} \\
\left( 1 + \dfrac{x}{\del} \right) \rho \big(1 - u(x) \big) +  F\big(x, u(x), v_x(x), w(\cdot) \big), &\quad -\del \le x < 0.
\end{cases}
\end{equation}
Because the viscosity supersolution $u$ takes values in $\left[ 0, \, \frac{\rho}{\rho + \bet} \right]$, it follows (in the viscosity sense) that
\begin{equation}\label{eq:Fdel_le_Finf}
F^\del \big(x, u(x), u_x(x), u(\cdot) \big) \ge F\big(x, u(x), u_x(x), u(\cdot) \big).
\end{equation}
Thus, $u$ is also a viscosity supersolution of $F^\del = 0$.  Next, from the viscosity subsolution $v$, define the function $v^\del$ by
\begin{equation}\label{eq:vdel}
v^\del(x) = v(x + \del),
\end{equation}
for $x \in \R$.  In Appendix B, we prove that $v^\del$ is a viscosity subsolution of $F^\del = 0$.  In what follows, we prove that $v^\del \le u$ on $\R$.

Define $S$ by
\begin{equation}\label{eq:S}
S = \sup_{x \in \R} \big( v^\del(x) - u(x) \big).
\end{equation}
We wish to show that $S \le 0$; suppose, on the contrary, that $S > 0$.  Note that $S$ is finite because $u$ and $v^\del$ are bounded.  We, next, approximate $S$.  To that end, define $q$ and $\qm$ by \eqref{eq:q} and \eqref{eq:qm}, respectively, with $b$ satisfying
\begin{equation}\label{eq:b}
b > \dfrac{\rho}{\rho + \bet} \, . 
\end{equation}
Define $\Smn$, which we will use to approximate $S$, as follows:
\begin{equation}\label{eq:Smn}
\Smn = \sup_{x, y \in \R} \left(v^\del(x) - u(y) - \qm(x) - \dfrac{n}{2} \, (x - y)^2 \right).
\end{equation}
Because $S > 0$, there exists $x' \in \R$ such that
\[
v^\del(x') - u(x') \ge \dfrac{S}{2} \, .
\]
Let $m > |x'|$, and for the remainder of this proof, assume that $m > |x'|$; then,
\begin{align}\label{eq:approx}
\Smn &= \sup_{x, y \in \R} \left(v^\del(x) - u(y) - \qm(x) - \dfrac{n}{2} \, (x - y)^2 \right) \notag \\
&\ge \sup_{x \in \R} \left(v^\del(x) - u(x) - \qm(x) - \dfrac{n}{2} \, (x - x)^2 \right) \notag \\
&= \sup_{x \in \R} \big(v^\del(x) - u(x) - \qm(x) \big) \notag \\
&\ge \sup_{|x| \le m} \big(v^\del(x) - u(x) - \qm(x) \big) \notag \\
&= \sup_{|x| \le m} \big(v^\del(x) - u(x) \big) \notag \\
&\ge v^\del(x') - u(x') \ge \dfrac{S}{2} \, .
\end{align}
Among other things, we have $\Smn > 0$, which implies that the supremum defining $\Smn$ is achieved on $[-2m, 2m]^2$ because $\qm(x) > ||u||_\infty + ||v^\del||_\infty$ for $|x| \ge 2m$.  Let $(\xmn, \ymn)$ be a point at which the supremum $\Smn$ is realized.  For a fixed value of $m > |x'|$, the sequence $\{(\xmn, \ymn)\}_{n \ge N}$ lies in a bounded region, namely $[-2m, 2m]^2$, which implies that this sequence converges to some point $(x_{m, \infty}, y_{m, \infty})$ as $n$ goes to $\infty$.  Furthermore, the inequality
\begin{equation}\label{eq:approx2}
v^\del(\xmn) - u(\ymn) - \qm(\xmn) - \dfrac{n}{2} \, (\xmn - \ymn)^2 \ge \dfrac{S}{2}
\end{equation}
holds for all $n \in \N$; thus, there exists $C > 0$ and $N \in \N$ such that, for all $n \ge N$,
\begin{equation}\label{eq:diff_bnd}
n (\xmn - \ymn)^2 \le C,
\end{equation}
which implies that $x_{m, \infty} = y_{m, \infty}$.

We can obtain even more from the inequalities in \eqref{eq:approx}.  First, note that
\begin{equation}\label{eq:m_infty}
\lim_{m \to \infty} \sup_{|x| \le m} \big( v^\del(x) - u(x) \big) = \sup_{x \in \R} \big( v^\del(x) - u(x) \big),
\end{equation}
in which the right side equals $S$.  Indeed, because $v^\del - u$ is u.s.c., it follows that, on the set $|x| \le m$, $v^\del - u$ achieves its supremum, say, at $\hat x_m$.  Then, because the interval $[-m, m]$ increases with $m$, the sequence $\{ v^\del(\hat x_m) - u(\hat x_m) \}_{m > |x'|}$ is non-decreasing.  Also, this sequence is bounded above by $S$; therefore, it has a limit $S'$.  Clearly, $S' \le S$, and we wish to show that $S' = S$.  Suppose, on the contrary, that $S' < S$, and define $\del = (S - S')/2$.  By the definition of $S$, there exists $\tilde x$ such that
\[
v^\del(\tilde x) - u(\tilde x) > S - \del = \dfrac{S + S'}{2} > S',
\]
which contradicts the definition of $S'$.  Thus, $S' = S$.

Now, because $\qm \ge 0$, from inequality \eqref{eq:approx}, we have
\[
\sup_{x \in \R} \big( v^\del(x) - u(x) \big) \ge \sup_{x \in \R} \big(v^\del(x) - u(x) - \qm(x) \big) \ge \sup_{|x| \le m} \big(v^\del(x) - u(x) \big),
\]
or equivalently,
\begin{equation}\label{eq:sub_approx}
S \ge \Sm \ge \sup_{|x| \le m} \big(v^\del(x) - u(x) \big),
\end{equation}
in which $\Sm$ denotes the supremum of $v^\del  - u - \qm$ on $\R$.  Then, by taking the limit as $m$ goes to $\infty$ in \eqref{eq:sub_approx} and by using \eqref{eq:m_infty}, we obtain
\[
S \ge \limsup_{m \to \infty} \Sm \ge \liminf_{m \to \infty} \Sm \ge S,
\]
which implies that
\begin{equation}\label{eq:approx_lim}
\lim_{m \to \infty} S_m = S.
\end{equation}
Also, inequality \eqref{eq:approx} implies that $v^\del(\xmn) - u(\ymn) - \qm(\xmn) \ge \Smn \ge \Sm$ for all $n \in \N$; now, let $n$ go to $\infty$ to obtain
\[
S \ge v^\del(x_{m, \infty}) - u(x_{m, \infty}) \ge \limsup_{n \to \infty} \big( v^\del(\xmn) - u(\ymn) - \qm(\xmn) \big) \ge \limsup_{n \to \infty} \Smn \ge \Sm,
\]
in which the second inequality follows because $v^\del - u$ is u.s.c.  Thus, we have
\begin{equation}\label{eq:approx_lim2}
\lim_{m \to \infty} \limsup_{n \to \infty} \Smn = \lim_{m \to \infty} \Sm = S = \lim \limits_{m \to \infty} \big( v^\del(x_{m, \infty}) - u(x_{m, \infty}) \big),
\end{equation}
and $\lim \limits_{m \to \infty} \qm(x_{m, \infty}) = 0$.
Similarly,
\begin{align*}
\Sm &\ge v^\del(x_{m, \infty}) - u(x_{m, \infty}) - \qm(x_{m, \infty}) \\
&\ge \limsup_{n \to \infty} \Big( v^\del(\xmn) - u(\ymn) - \qm(\xmn) - \dfrac{n}{2} \, (\xmn - \ymn)^2 \Big) \\
&= \limsup_{n \to \infty} \Smn \ge \Sm,
\end{align*}
which implies that
\begin{equation}\label{eq:approx_lim3}
\limsup_{n \to \infty} \Smn = \Sm = v^\del(x_{m, \infty}) - u(x_{m, \infty}) - \qm(x_{m, \infty}),
\end{equation}
and $\lim \limits_{n \to \infty} n (\xmn - \ymn)^2 = 0$.

From the definition of $(\xmn, \ymn)$, we deduce that
\begin{equation}
\begin{cases}
\xmn ~ \hbox{is a maximizer of} ~ x \mapsto v^\del(x) - \qm(x) - \dfrac{n}{2} \, (x - \ymn)^2 ~ \hbox{on} ~ \R, \\
\ymn ~ \hbox{is a minimizer of} ~ y \mapsto u(y) + \dfrac{n}{2} \, (\xmn - y)^2 ~ \hbox{on} ~ \R.
\end{cases}
\end{equation}
By using the viscosity subsolution property of $v^\del$ with the test function $\qm(x) + \frac{n}{2} (x - \ymn)^2$ at the point $\xmn$, we obtain
\begin{align*}
F^\del \big(\xmn, v^\del(\xmn), \qm'(\xmn) + n(\xmn - \ymn), v^\del(\cdot) \big) \le 0,
\end{align*}
or equivalently,
\begin{align}
\label{eq:sub_asy}
&\bet v^\del(\xmn) + \min \left(1, - \, \dfrac{\xmn}{\del} \right) \rho( v^\del(\xmn) - 1) {\bf 1}_{\{\xmn < 0\}} + \kap \big( \qm'(\xmn) + n(\xmn - \ymn) \big) \notag \\
&- \la \inf_R \bigg[ \big( \qm'(\xmn) + n(\xmn - \ymn) \big) \left( (1 + \tet) \E R + \eta \E(YR) - \dfrac{\eta}{2} \, \E \big(R^2 \big) \right) \notag \\
&\;\; \qquad \qquad + \E v^\del(\xmn - R) - v^\del(\xmn) \bigg] \le 0.
\end{align}
Similarly, by using the viscosity supersolution property of $u$ with the test function $- \frac{n}{2} (\xmn - y)^2$ at the point $\ymn$, we obtain
\begin{align*}
F^\del \big(\ymn, u(\ymn), n (\xmn - \ymn), u(\cdot) \big) \ge 0,
\end{align*}
or equivalently,
\begin{align}
\label{eq:sup_asy}
&\bet u(\ymn) + \min \left(1, - \, \dfrac{\ymn}{\del} \right) \rho( u(\ymn) - 1) {\bf 1}_{\{\ymn < 0\}} + \kap n(\xmn - \ymn) \notag \\
&- \la \inf_R \left[ n(\xmn - \ymn)\left( (1 + \tet) \E R + \eta \E(YR) - \dfrac{\eta}{2} \, \E \big(R^2 \big) \right) + \E u(\ymn - R) - u(\ymn) \right] \ge 0.
\end{align}
By subtracting inequality \eqref{eq:sup_asy} from \eqref{eq:sub_asy}, we obtain
\begin{align}\label{eq:difference}
& (\la + \bet) \big(v^\del(\xmn) - u(\ymn) \big) + \min \left(1, - \, \dfrac{\xmn}{\del} \right) \rho \big( v^\del(\xmn) - 1 \big) {\bf 1}_{\{\xmn < 0\}} \notag \\
&- \min \left(1, - \, \dfrac{\ymn}{\del} \right) \rho \big( u(\ymn) - 1 \big) {\bf 1}_{\{\ymn < 0\}} + \kap \qm'(\xmn) \notag \\
&- \la \inf_R \left[ \big( \qm'(\xmn) + n(\xmn - \ymn) \big) \left( (1 + \tet) \E R + \eta \E(YR) - \dfrac{\eta}{2} \, \E \big(R^2 \big) \right)  + \E v^\del(\xmn - R) \right] \notag \\
&+ \la \inf_R \left[ n (\xmn - \ymn) \left( (1 + \tet) \E R + \eta \E(YR) - \dfrac{\eta}{2} \, \E \big(R^2 \big) \right)  + \E u(\ymn - R) \right] \le 0.
\end{align}
Note that
\begin{align*}
&\la \inf_R \left[  \E u(\ymn - R) - \E v^\del(\xmn - R) - \qm'(\xmn)  \left( (1 + \tet) \E R + \eta \E(YR) - \dfrac{\eta}{2} \, \E \big(R^2 \big) \right) \right] \\
&\le \la \inf_R \left[ n (\xmn - \ymn) \left( (1 + \tet) \E R + \eta \E(YR) - \dfrac{\eta}{2} \, \E \big(R^2 \big) \right)  + \E u(\ymn - R) \right] \\
&\quad - \la \inf_R \left[ \big( \qm'(\xmn) + n(\xmn - \ymn) \big) \left( (1 + \tet) \E R + \eta \E(YR) - \dfrac{\eta}{2} \, \E \big(R^2 \big) \right) + \E v^\del(\xmn - R) \right].
\end{align*}
The above inequality and \eqref{eq:difference} imply
\begin{align*}
&(\la + \bet) \big(v^\del(\xmn) - u(\ymn) \big) + \min \left(1, - \, \dfrac{\xmn}{\del} \right) \rho \big( v^\del(\xmn) - 1 \big) {\bf 1}_{\{\xmn < 0\}} \\
& - \min \left(1, - \, \dfrac{\ymn}{\del} \right) \rho \big( u(\ymn) - 1 \big) {\bf 1}_{\{\ymn < 0\}} + \kap \qm'(\xmn) \\
&+ \la \inf_R \left[  \E u(\ymn - R) - \E v^\del(\xmn - R) - \qm'(\xmn)  \left( (1 + \tet) \E R + \eta \E(YR) - \dfrac{\eta}{2} \, \E \big(R^2 \big) \right) \right] \le 0,
\end{align*}
or equivalently,
\begin{align}\label{eq:equiv}
&(\la + \bet) \big(v^\del(\xmn) - u(\ymn) \big) + \min \left(1, - \, \dfrac{\xmn}{\del} \right) \rho \big( v^\del(\xmn) - 1 \big) {\bf 1}_{\{\xmn < 0\}} \notag \\
&\quad - \min \left(1, - \, \dfrac{\ymn}{\del} \right) \rho \big( u(\ymn) - 1 \big) {\bf 1}_{\{\ymn < 0\}} + \kap \qm'(\xmn) \notag \\
& \le \la \sup_R \left[\E v^\del(\xmn - R) - \E u(\ymn - R) + \qm'(\xmn)  \left( (1 + \tet) \E R + \eta \E(YR) - \dfrac{\eta}{2} \, \E \big(R^2 \big) \right) \right].
\end{align}
Also, note that
\begin{align*}
&\la \sup_R \left[\E v^\del(\xmn - R) - \E u(\ymn - R) + \qm'(\xmn)  \left( (1 + \tet) \E R + \eta \E(YR) - \dfrac{\eta}{2} \, \E \big(R^2 \big) \right) \right] \\
&\le \la \sup_R \E \big[ v^\del(\xmn - R) - u(\ymn - R) - \qm(\xmn - R) \big]  \\
&\quad + \la \sup_R \left[ \E \qm(\xmn - R) + \qm'(\xmn)  \left( (1 + \tet) \E R + \eta \E(YR) - \dfrac{\eta}{2} \, \E \big(R^2 \big) \right) \right].
\end{align*}
The above inequality and \eqref{eq:equiv} imply
\begin{align*}
&(\la + \bet) \big(v^\del(\xmn) - u(\ymn) \big) + \min \left(1, - \, \dfrac{\xmn}{\del} \right) \rho \big( v^\del(\xmn) - 1 \big) {\bf 1}_{\{\xmn < 0\}} \\
&\quad - \min \left(1, - \, \dfrac{\ymn}{\del} \right) \rho \big( u(\ymn) - 1 \big) {\bf 1}_{\{\ymn < 0\}} \\
& \le \la \sup_R \E \bigg[ v^\del(\xmn - R) - u(\ymn - R) - \qm(\xmn - R) - \dfrac{n}{2} \, (\xmn - \ymn)^2 \bigg] + \dfrac{n}{2} \, (\xmn - \ymn)^2 \\
&\quad + \la \sup_R \left[ \E \qm(\xmn - R) + \qm'(\xmn)  \left( (1 + \tet) \E R + \eta \E(YR) - \dfrac{\eta}{2} \, \E \big(R^2 \big) - \dfrac{\kap}{\la} \right) \right].
\end{align*}
From the definition of $\Smn$ in \eqref{eq:Smn}, we obtain
\begin{align}\label{ineq2}
&\la \big(v^\del(\xmn) - u(\ymn) - \qm(\xmn) \big) + \bet \big(v^\del(\xmn) - u(\ymn) \big) - \dfrac{n}{2} \, (\xmn - \ymn)^2  \notag \\
&\quad + \min \left(1, - \, \dfrac{\xmn}{\del} \right) \rho \big( v^\del(\xmn) - 1 \big) {\bf 1}_{\{\xmn < 0\}} - \min \left(1, - \, \dfrac{\ymn}{\del} \right) \rho \big( u(\ymn) - 1 \big) {\bf 1}_{\{\ymn < 0\}} \notag \\
&\le \la \Smn + \la \left( \sup_R \E \qm(\xmn - R) - \qm(\xmn)  + || \qm' ||_\infty \left\{ (1 + \tet) \E Y + \dfrac{\eta}{2} \, \E \big( Y^2 \big) - \dfrac{\kap}{\la} \right\} \right) .
\end{align}
If we let $n$ go to $\infty$ in \eqref{ineq2}, use the inequalities
\begin{align*}
&\limsup_{n\to \infty} \big(v^\del(\xmn) - u(\ymn) \big) \ge \limsup_{n\to \infty} \big(v^\del(\xmn) - u(\ymn) - \qm(\xmn) \big) \ge \Sm, 
\end{align*}
and cancel the term $\la \Sm$ from each side, we get
\begin{align}\label{ineq3}
&\bet \Sm + \min \left(1, - \, \dfrac{x_{m, \infty}}{\del} \right) \rho \Sm {\bf 1}_{\{x_{m, \infty} < 0\}} \le \la \Big| \Big| \sup_R \, \E \qm(x - R) - \qm(x) \Big| \Big|_\infty + c || \qm' ||_\infty.
\end{align}
By taking a limit as $m$ goes to $\infty$ in \eqref{ineq3} and by using the results of Lemma \ref{lem:qm}, we obtain
\begin{equation}\label{eq:lim_contra}
\bet S + \rho S \liminf_{m \to \infty} \left[ \min \left(1, - \, \dfrac{x_{m, \infty}}{\del} \right) {\bf 1}_{\{x_{m, \infty} < 0 \}} \right] \le 0,
\end{equation}
which contradicts $S > 0$ and $\bet > 0$.

Thus, we have shown that $v^\del \le u$ on $\R$.  By taking the limit superior as $\del \to 0+$, as in \eqref{eq:vdel_lim}, we obtain $v \le u$ on $\R$.
\end{proof}

We now present our main result, an application of the comparison principle in Theorem \ref{thm:comp}.

\begin{thm}\label{thm:value}
The minimum discounted probability of exponential Parisian ruin $\psi$ is the unique viscosity solution of the HJB equation \eqref{HJB_F} with boundary conditions \eqref{boundary_condition}.
\end{thm}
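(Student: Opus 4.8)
The plan is to combine the three pillars already in place: the sandwich $u_{-} \le \psi \le v_{+}$, the viscosity properties of the stochastic envelopes, and the comparison principle. First I would recall from Corollaries \ref{cor:psi_vplus} and \ref{cor:uminus_psi} that $u_{-} \le \psi \le v_{+}$ on $\R$. By Theorem \ref{thm:v_plus}, the upper stochastic envelope $v_{+}$ is a viscosity subsolution of \eqref{HJB_F}--\eqref{boundary_condition}, and by Theorem \ref{thm:u_minus}, the lower stochastic envelope $u_{-}$ is a viscosity supersolution of the same problem; in particular, both satisfy the boundary conditions \eqref{boundary_condition} with strict equality at $\pm\infty$, so the hypotheses of the comparison principle are met.

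Next I would apply Theorem \ref{thm:comp} (with $v = v_{+}$ a viscosity subsolution and $u = u_{-}$ a viscosity supersolution) to obtain $v_{+} \le u_{-}$ on $\R$. Combined with the sandwich inequality, this forces
\[
u_{-} = \psi = v_{+} \qquad \text{on } \R.
\]
Because $v_{+}$ is u.s.c.\ and $u_{-}$ is l.s.c.\ and the two functions coincide, $\psi$ is continuous on $\R$. Moreover, $\psi$ inherits the viscosity subsolution property from $v_{+}$ and the viscosity supersolution property from $u_{-}$, hence $\psi$ is a continuous viscosity solution of \eqref{HJB_F} satisfying \eqref{boundary_condition}.

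For uniqueness, I would let $w$ be any viscosity solution of \eqref{HJB_F}--\eqref{boundary_condition}. Applying Theorem \ref{thm:comp} once with $w$ as subsolution and $\psi$ as supersolution yields $w \le \psi$, and once more with $\psi$ as subsolution and $w$ as supersolution yields $\psi \le w$; therefore $w = \psi$, which establishes uniqueness.

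The substantive difficulty has already been absorbed into Theorems \ref{thm:v_plus}, \ref{thm:u_minus}, and especially the comparison principle of Theorem \ref{thm:comp} (with its handling of the discontinuity of $F$ at $0$ via the approximating operators $F^{\del}$); the remaining step here is essentially bookkeeping. The only point requiring care is to confirm that the precise boundary behavior demanded by Theorem \ref{thm:comp} is exactly what Theorems \ref{thm:v_plus} and \ref{thm:u_minus} deliver, so that no further adjustment of the envelopes is needed before invoking comparison.
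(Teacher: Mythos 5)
Your proposal is correct and follows essentially the same route as the paper: sandwich $u_{-} \le \psi \le v_{+}$, viscosity properties of the envelopes, comparison to force $v_{+} \le u_{-}$, hence $u_{-} = \psi = v_{+}$. Your explicit double application of the comparison principle to any other viscosity solution $w$ makes the uniqueness claim precise, which the paper leaves implicit, but this is a minor expository addition rather than a genuinely different argument.
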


\begin{proof}
From Corollaries \ref{cor:psi_vplus} and \ref{cor:uminus_psi}, we know
\begin{equation}\label{eq:uminus_psi_vplus}
u_{-} \le \psi \le v_{+}
\end{equation}
on $\R$.  Furthermore, Theorems \ref{thm:v_plus} and \ref{thm:u_minus} prove that $v_{+}$ and $u_{-}$ are viscosity sub- and supersolutions, respectively.  Thus, Theorem \ref{thm:comp} implies that $v_{+} \le u_{-}$, which, when combined with \eqref{eq:uminus_psi_vplus} implies that
\[
u_{-} = \psi = v_{+}
\]
on $\R$.  Thus, we have proved this theorem.
\end{proof}

Next, we present a corollary that shows that $\psi$ is differentiable almost everywhere with strictly negative (and finite) derivative where it exists.  We, thereby, deduce that $\psi$ is strictly decreasing on $\R$.

\begin{cor}
The minimum discounted probability of exponential Parisian ruin $\psi$ is differentiable almost everywhere with respect to Lebesgue measure.  Moreover, the lower left-derivative $D_{-} \psi$ defined by
\[
D_{-} \psi(x) = \liminf_{\del \to 0+} \dfrac{\psi(x) - \psi(x - \del)}{\del}
\]
is finite on $\R$.  Finally, $\psi$ is strictly decreasing on $\R$.
\end{cor}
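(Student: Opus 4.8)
I would establish the three assertions in turn, using Lebesgue's differentiation theorem for the first, a probabilistic one-sided Lipschitz estimate for the second, and the viscosity-solution characterization of $\psi$ from Theorem \ref{thm:value} for the third. Since $\psi$ is non-increasing and bounded on $\R$, Lebesgue's theorem on the differentiability of monotone functions immediately yields that $\psi'(x)$ exists (and is $\le 0$) for Lebesgue-almost-every $x$, that $\psi'$ is locally integrable, and that $\psi(b)-\psi(a)\le\int_a^b\psi'(t)\,dt$ for all $a<b$; this settles the first claim.

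For the second claim, note that $D_{-}\psi(x)\le 0$ for every $x$ because $\psi$ is non-increasing, so it suffices to bound $D_{-}\psi$ from below, and for that I would prove the one-sided estimate
\[
\psi(x-\del) \le e^{-\bet\del/c}\,\psi(x)+\frac{\la+\rho}{c}\,\del, \qquad x\in\R,\ \del>0.
\]
Fix $x$, $\del$, and $\veps>0$, and choose an $\veps$-optimal retention strategy for the initial surplus $x$. Starting instead from $x-\del$, use full retention $R_t(y)=y$ on $[0,\del/c]$ --- whose net drift is exactly $c>0$ because the reinsurance premium then vanishes --- and thereafter switch to the time-shifted $\veps$-optimal strategy for $x$. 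On the event that no claim occurs on $[0,\del/c]$ and the exponential Parisian alarm does not ring on $[0,\del/c]$, which has probability at least $1-(\la+\rho)\del/c$, the surplus climbs deterministically from $x-\del$ to $x$ with no excursion clock expiring; by the strong Markov property and the memorylessness of the exponential alarm, the continuation then contributes at most $e^{-\bet\del/c}(\psi(x)+\veps)$, while on the complementary event the contribution is at most its own probability. Letting $\veps\to0$ and rearranging gives $(\psi(x)-\psi(x-\del))/\del\ge-(\la+\rho)/c$, hence $D_{-}\psi(x)\ge-(\la+\rho)/c>-\infty$.

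For strict monotonicity I would argue by contradiction: suppose $\psi$ equals a constant $k$ on a non-degenerate interval $[a,b]$, and pick $x_0\in(a,b)$ with $x_0\ne 0$. Testing $\psi$ at $x_0$ against the $\mC^1$ functions $k+(x-x_0)^2$ and $k-(x-x_0)^2$, we see that $\psi-(k+(x-x_0)^2)$ has a local maximum at $x_0$ and $\psi-(k-(x-x_0)^2)$ a local minimum there, so the viscosity sub- and supersolution inequalities of Definition \ref{def:local_vis_val} --- which $\psi$ satisfies, being the viscosity solution of Theorem \ref{thm:value} --- force $F(x_0,k,0,\psi(\cdot))=0$. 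Since $\psi$ is non-increasing and $R\ge 0$, we have $\E\psi(x_0-R)\ge\psi(x_0)=k$ for every admissible $R$, with equality at $R\equiv 0$, so the infimum in $F$ equals $0$; therefore $\bet k+\rho(k-1){\bf 1}_{\{x_0<0\}}=0$. If $x_0>0$ this forces $k=0$, contradicting Lemma \ref{lem:psi_pos}; if $x_0<0$ it forces $k=\rho/(\rho+\bet)$, contradicting $\psi(x_0)\le v_{+}(x_0)\le\opsi(x_0)<\rho/(\rho+\bet)$ (Corollary \ref{cor:psi_vplus} and Lemma \ref{lem:opsi}). Thus $\psi$ is constant on no interval, and a non-increasing function with this property is strictly decreasing. (Alternatively, the same computation at any point $x_0\ne 0$ where $\psi'(x_0)$ exists --- using that a continuous function is squeezed near such a point between $\mC^1$ functions with matching slope --- shows $\psi'(x_0)<0$, whence $\psi(b)-\psi(a)\le\int_a^b\psi'<0$.)

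The step I expect to be the main obstacle is the one-sided Lipschitz estimate in the second part: the concatenated-strategy argument must be made rigorous, with particular care for the exponential Parisian clock while the surplus is (briefly) negative during the climb from $x-\del$ to $x$, and for the use of the strong Markov property together with the memorylessness of the alarm to justify restarting an $\veps$-optimal strategy for $x$ at time $\del/c$. The first and third parts are essentially routine given the results already established.
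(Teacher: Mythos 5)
Your proposal is correct. The first and third steps coincide with the paper's: Lebesgue's theorem on monotone functions for a.e.\ differentiability, and, for strict monotonicity, the viscosity equation with zero slope at a hypothetical interval of constancy, giving $\bet k+\rho(k-1)\mathbf{1}_{\{x_0<0\}}=0$ and contradicting $0<\psi<\rho/(\rho+\bet)$; your explicit quadratic test functions $k\pm(x-x_0)^2$ simply unpack what the paper abbreviates as ``$\psi_x(x_0)=0$ and $F(x_0,\psi(x_0),0,\psi(\cdot))=0$.'' The second step is where you genuinely diverge. The paper argues inside the viscosity framework: supposing $D_-\psi(x_0)=-\infty$, take $\var\in\mC^1(\R)$ with $\psi-\var$ maximized at $x_0$, observe $\var_x(x_0)\le D_-\psi(x_0)$, and then feed $R=Y$ into the subsolution inequality (the coefficient of $\var_x(x_0)$ becomes $-c$) to obtain $+\infty\le 0$. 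As written, that argument needs a test function to exist at $x_0$, which is exactly what fails when $D_-\psi(x_0)=-\infty$: the superdifferential $D^+\psi(x_0)\subseteq(-\infty,D_-\psi(x_0)]$ is then empty, so the subsolution condition is vacuous at $x_0$ and the contradiction really lives in the nonexistence of $\var$ rather than in the HJB inequality. Your probabilistic route sidesteps this subtlety entirely: concatenating full retention (drift exactly $c$, no reinsurance premium) on $[0,\del/c]$ with an $\veps$-optimal strategy for $x$, bounding the bad event of a claim or alarm ring by $(\la+\rho)\del/c$, and invoking memorylessness of the exponential clock to restart yields the one-sided estimate $\psi(x-\del)\le e^{-\bet\del/c}\psi(x)+\tfrac{\la+\rho}{c}\del$ and hence the explicit bound $D_-\psi\ge-(\la+\rho)/c$, which is quantitatively stronger than the paper's qualitative $D_-\psi>-\infty$. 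The strong-Markov and concatenation bookkeeping you flag is indeed the work, but it is routine given the strong formulation in Section 4.2 and poses no obstacle.
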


\begin{proof}
Because $\psi$ is non-increasing, Lebesgue's Differentiation Theorem implies the first statement of this corollary.

To prove the second statement, suppose, on the contrary, $D_{-} \psi(x_0) = - \infty$ for some $x_0 \in \R$; then, let $\var \in \mC^1(\R)$ be a test function such that $\psi - \var$ reaches a maximum at $x_0$.  Then, there exists $\del_0 > 0$ such that $0 < \del < \del_0$ implies
\[
\psi(x_0 - \del) - \var(x_0 - \del) \le \psi(x_0) - \var(x_0),
\]
or equivalently,
\[
\dfrac{\var(x_0) - \var(x_0 - \del)}{\del} \le \dfrac{\psi(x_0) - \psi(x_0 - \del)}{\del} \, .
\]
By taking the limit inferior as $\del \to 0+$, we obtain $\var_x(x_0) = -\infty$.  The viscosity subsolution property of $\psi$ implies
\begin{align*}
&\bet \psi(x_0) + \rho \big( \psi(x_0) - 1 \big) {\bf 1}_{\{x_0 \le 0 \}} \\
&+ \sup_R \left[ \left( \kap - \la \left( (1 + \tet) \E R + \eta \E(YR) - \dfrac{\eta}{2} \, \E \big(R^2 \big) \right) \right) \var_x(x_0) + \la \big( \psi(x_0) - \E \psi(x_0 - R) \big) \right] \le 0.
\end{align*}
If we set $R = Y$, then the coefficient of $\var_x(x_0)$ equals $-c$, and we obtain $+\infty \le 0$, a contradiction.  Thus, $D_{-} \psi > - \infty$ on $\R$.

To prove the third statement, suppose, on the contrary, that $\psi$ is not strictly decreasing on $\R$.  Then, because $\psi$ is non-increasing, there exist $x_1 < x_2$ such that, for all $x \in [x_1, x_2]$, $\psi(x_1) = \psi(x) = \psi(x_2)$.  It follows that $\psi_x(x) = 0$ for all $x_1 < x < x_2$.  Let $x_0 \in (x_1, x_2)$ with $x_0 \ne 0$; then, $F \big( x_0, \psi(x_0), 0, \psi(\cdot) \big) = 0$ implies
\[
\bet \psi(x_0) + \rho \big( \psi(x_0) - 1 \big) {\bf 1}_{\{x_0 < 0 \}} = 0,
\]
which contradicts $0 < \psi(x_0) < \frac{\rho}{\rho + \bet}$.  Thus, $\psi$ is strictly decreasing on $\R$.
\end{proof}

\appendix

\section{Equivalence of definitions of viscosity sub- and supersolutions}\label{sec:A}

In this appendix, we prove that Definition \ref{def:strict_glob} is equivalent to Definition \ref{def:local_vis_val}.  In the process, we  introduce another two definitions of viscosity sub- and supersolutions, and we prove those definitions are all equivalent.

\begin{defin}\label{def:glob_vis_tes}
We say a u.s.c.\ function $\underline{u}: \R \to \left[ 0, \, \frac{\rho}{\rho + \bet} \right]$ is a {\rm viscosity subsolution} of \eqref{HJB_F} and \eqref{boundary_condition} if \eqref{boundary_condition} holds and if, for any $x_0 \in \R$ and for any $\var \in \mC^1(\R)$ such that $\underline{u} - \var$ reaches a global maximum of zero at $x_0$, we have
\begin{equation}\label{eq:subA1}
\begin{cases}
F\big(x_0, \underline{u}(x_0), \var_x(x_0), \var(\cdot) \big) \le 0, &\quad x_0 \ne 0, \\
\lim \limits_{x \to 0-} F\big(x, \underline{u}(x), \var_x(x), \var(\cdot) \big) \le 0, &\quad x_0 = 0.
\end{cases}
\end{equation}
Similarly, we say an l.s.c.\ function $\bar{u}: \R \to \left[ 0, \, \frac{\rho}{\rho + \bet} \right]$ is a {\rm viscosity supersolution} of \eqref{HJB_F} and \eqref{boundary_condition} if \eqref{boundary_condition} holds and if, for any $x_0 \in \R$ and for any $\phi \in \mC^1(\R)$ such that $\bar{u} - \phi$ reaches a global minimum of zero at $x_0$, we have
\begin{equation}
F\big(x_0, \bar{u}(x_0), \phi_x(x_0), \bar{u}(\cdot) \big) \ge 0.
\end{equation}
Finally, a function $u$ is called a $($continuous$)$ {\rm viscosity solution} of \eqref{HJB_F} and \eqref{boundary_condition} if it is both a viscosity subsolution and a viscosity supersolution of \eqref{HJB_F} and \eqref{boundary_condition}.  \qed
\end{defin}

\begin{defin}\label{def:glob_vis_val}
We say a u.s.c.\ function $\underline{u}: \R \to \left[ 0, \, \frac{\rho}{\rho + \bet} \right]$ is a {\rm viscosity subsolution} of \eqref{HJB_F} and \eqref{boundary_condition} if \eqref{boundary_condition} holds and if, for any $x_0 \in \R$ and for any $\var \in \mC^1(\R)$ such that $\underline{u} - \varphi$ reaches a global maximum of zero at $x_0$, we have
\begin{equation}\label{eq:subA2}
\begin{cases}
F\big(x_0, \underline{u}(x_0), \varphi_x(x_0), \underline{u}(\cdot) \big) \le 0, &\quad x_0 \ne 0, \\
\lim \limits_{x \to 0-} F\big(x, \underline{u}(x), \var_x(x), \underline{u}(\cdot) \big) \le 0, &\quad x_0 = 0.
\end{cases}
\end{equation}
Similarly, we say an l.s.c.\ function $\bar{u}: \R \to \left[ 0, \, \frac{\rho}{\rho + \bet} \right]$ is a {\rm viscosity supersolution} of \eqref{HJB_F} and \eqref{boundary_condition} if \eqref{boundary_condition} holds and if, for any $x_0 \in \R$ and for any $\phi \in \mC^1(\R)$ such that $\bar{u} - \phi$ reaches a global minimum of zero at $x_0$, we have
\begin{equation}
F\big(x_0, \bar{u}(x_0), \phi_x(x_0), \bar{u}(\cdot) \big) \ge 0.
\end{equation}
Finally, a function $u$ is called a $($continuous$)$ {\rm viscosity solution} of \eqref{HJB_F} and \eqref{boundary_condition} if it is both a viscosity subsolution and a viscosity supersolution of \eqref{HJB_F} and \eqref{boundary_condition}.  \qed
\end{defin}

\begin{prop}
Definitions {\rm \ref{def:glob_vis_tes}} and {\rm \ref{def:glob_vis_val}} are equivalent.
\end{prop}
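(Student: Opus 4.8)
The plan is to prove the two implications separately, after one reduction: the \emph{supersolution} conditions in Definitions \ref{def:glob_vis_tes} and \ref{def:glob_vis_val} are literally identical (both demand $F(x_0,\bar u(x_0),\phi_x(x_0),\bar u(\cdot))\ge 0$), so there is nothing to prove for supersolutions. For a \emph{subsolution} the only discrepancy is which function is fed into the non-local slot of $F$, i.e.\ into $\E w(x_0-R)-w(x_0)$: the test function $\var$ in Definition \ref{def:glob_vis_tes}, or $\underline u$ itself in Definition \ref{def:glob_vis_val}. Since $\underline u-\var$ attains a global maximum of $0$ at $x_0$, we have $\underline u\le\var$ on $\R$ with $\underline u(x_0)=\var(x_0)$, and this is the only structural fact we shall use.

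\emph{Definition \ref{def:glob_vis_val} $\Rightarrow$ Definition \ref{def:glob_vis_tes}} is the easy monotonicity direction. For each fixed retention function $R$, passing from $\underline u$ to $\var$ in
\[
\Big((1+\tet)\E R+\eta\E(YR)-\tfrac\eta2\E(R^2)\Big)\var_x(x_0)+\E w(x_0-R)-w(x_0)
\]
can only increase its value, because $\E\underline u(x_0-R)\le\E\var(x_0-R)$ while $\underline u(x_0)=\var(x_0)$. Taking the infimum over $R$ and recalling that this quantity enters $F$ with the factor $-\la<0$, we get $F(x_0,\underline u(x_0),\var_x(x_0),\var(\cdot))\le F(x_0,\underline u(x_0),\var_x(x_0),\underline u(\cdot))\le 0$, which is the subsolution inequality of Definition \ref{def:glob_vis_tes}. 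The case $x_0=0$ is handled by running the same pointwise-in-$R$ comparison inside the one-sided limit $x\to 0-$, noting that the terms $\bet\underline u(x)$, $\rho(\underline u(x)-1)$ and $\kap\var_x(x)$ are common to both operators and drop out of the comparison.

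\emph{Definition \ref{def:glob_vis_tes} $\Rightarrow$ Definition \ref{def:glob_vis_val}} requires approximating $\underline u$ from above by $\mC^1$ test functions that still touch at $x_0$ with the prescribed first-order data. Given $\var$ with $\underline u-\var$ reaching a global maximum of $0$ at $x_0$, set $\var_n=\omega_n\var+(1-\omega_n)h_n$, where $\omega_n\in\mC^1(\R)$ is a cutoff with $\omega_n\equiv 1$ on $[x_0-1/n,x_0+1/n]$ and $\mathrm{supp}\,\omega_n\subset[x_0-2/n,x_0+2/n]$, and $h_n\in\mC^1(\R)$ is a sequence, uniformly bounded on $(-\infty,x_0]$, with $\underline u\le h_n$ and $h_n\to\underline u$ pointwise (such $h_n$ exist because $\underline u$ is bounded and u.s.c.---for instance, mollify the Lipschitz sup-convolutions $x\mapsto\sup_y\big(\underline u(y)-n|x-y|\big)$ and add $1/n$). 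Then $\var_n\in\mC^1(\R)$, $\underline u\le\var_n$, $\var_n\equiv\var$ near $x_0$ (so $\var_n(x_0)=\underline u(x_0)$ and $\var_{n,x}(x_0)=\var_x(x_0)$), $\var_n\to\underline u$ pointwise, and $\{\var_n\}$ is uniformly bounded on $(-\infty,x_0]$. Each $\underline u-\var_n$ still has a global maximum of $0$ at $x_0$, so Definition \ref{def:glob_vis_tes} gives $F(x_0,\underline u(x_0),\var_x(x_0),\var_n(\cdot))\le 0$ for all $n$. Let $n\to\infty$: $\E\var_n(x_0-R)\to\E\underline u(x_0-R)$ for each $R$ (only values of $\var_n$ on $(-\infty,x_0]$ enter, since $R\ge 0$) by dominated convergence, and since $\var_n(x_0)=\underline u(x_0)$, the elementary fact that $\inf_R f_n\to\inf_R f$ whenever $f_n\ge f$ and $f_n\to f$ pointwise lets the non-local infimum pass to the limit, yielding $F(x_0,\underline u(x_0),\var_x(x_0),\underline u(\cdot))\le 0$. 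The case $x_0=0$ is again obtained by carrying this limiting argument inside $x\to 0-$.

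The main obstacle is the construction in the second direction: producing $\{\var_n\}\subset\mC^1(\R)$ that simultaneously (i) lies above $\underline u$, (ii) reproduces the zeroth- and first-order data of $\var$ at $x_0$, and (iii) decreases to $\underline u$ pointwise with a uniform bound on $(-\infty,x_0]$ --- together with the routine but necessary justification that the infimum over retention functions commutes with the limit $n\to\infty$. The $x_0=0$ cases require only modest extra care in threading the arguments through the one-sided limit and introduce no new ideas; everything else is bookkeeping.
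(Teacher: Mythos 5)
Your proof is correct and follows the same two-step skeleton as the paper's: a monotonicity argument for Definition~\ref{def:glob_vis_val} $\Rightarrow$ Definition~\ref{def:glob_vis_tes}, and a cutoff-patch approximation for the converse. Your observation that the supersolution conditions in the two definitions are \emph{literally identical} is sharper than the paper's ``the proof for viscosity supersolutions follows similarly.'' Where you genuinely depart from the paper is in how the approximating test functions are produced: the paper invokes the $L^1$-density of compactly supported $\mC^\infty$ functions (citing Wheeden--Zygmund) to obtain $\var_n$ with $\int |\underline u(x-r) - \var_n(x-r)|\,dF_R(r) \le \epsilon_1$ uniformly over retention functions $R$, and then asserts ``without loss of generality we may assume $\underline u \le \var_n$'' --- a claim that is not free, since a generic $L^1$-approximant of a merely u.s.c.\ function need not dominate it. Your sup-convolution construction (mollify $x \mapsto \sup_y\bigl(\underline u(y) - n|x-y|\bigr)$, shift up by $1/n$) delivers $\underline u \le h_n$ by design, and pointwise convergence plus dominated convergence replace the $L^1$ estimate; the small lemma you cite, that $\inf_R f_n \to \inf_R f$ when $f_n \ge f$ and $f_n \to f$ pointwise in $R$, is correct and does exactly what is needed. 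You also collapse the paper's two-parameter limit (cutoff radius $\veps$ and approximation error $\epsilon_1$) into a single $n\to\infty$. Both your argument and the paper's leave the $x_0 = 0$ left-limit case at the same level of informality, so that is not a point of difference.
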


\begin{proof}
We prove the statement for viscosity subsolutions only; the proof for viscosity supersolutions follows similarly.

\medskip

\noindent {\bf Definition \ref{def:glob_vis_val} $\Rightarrow$ Definition \ref{def:glob_vis_tes}.}  Let $\var \in \mC^1(\R)$, and let $x_0 \in \R$ be such that
\[
\max \limits_{x \in \R} \big\{ (\underline{u} - \var)(x) \big\} = (\underline{u} - \var)(x_0) = 0.
\]
Then, we have
$$
\underline{u}(x_0) - \underline{u}(x) \ge \var(x_0) - \var(x), \quad \hbox{for all}~x \in \R.
$$
It follows that, for $x_0 \ne 0$,
\begin{align*}
&F \big(x_0, \underline{u}(x_0), \var_x(x_0), \var(\cdot) \big) \\
&= \bet \underline{u}(x_0) + \rho \big( \underline{u}(x_0) - 1 \big) {\bf 1}_{\{x_0 < 0\}} + \kap \var_x(x_0)  \\
&\quad - \la \inf_R \left[ \left( (1 + \tet) \E R + \eta \E(YR) - \dfrac{\eta}{2} \, \E \big(R^2 \big) \right) \var_x(x_0) + \E \var(x_0 - R) - \var (x_0) \right] \\
& \le \bet \underline{u}(x_0) + \rho \big( \underline{u}(x_0) - 1 \big) {\bf 1}_{\{x_0 < 0\}} + \kap \var_x(x_0) \\
& \quad - \la \inf_R \left[\left( (1 + \tet) \E R + \eta \E(YR) - \dfrac{\eta}{2} \, \E \big(R^2 \big) \right) \var_x(x_0) + \E \underline{u}(x_0 - R) - \underline{u} (x_0) \right] \\
& = F \big(x_0, \underline{u}(x_0), \var_x(x_0), \underline{u}(\cdot) \big) \le 0,
\end{align*}
in which the last inequality follows from \eqref{eq:subA2} in Definition \ref{def:glob_vis_val}.   Hence, we have proved inequality \eqref{eq:subA1} in Definition \ref{def:glob_vis_tes}.  The case for which $x_0 = 0$ follows similarly by considering the left-limit of $F$ at zero.

\medskip

\noindent {\bf Definition \ref{def:glob_vis_tes} $\Rightarrow$ Definition \ref{def:glob_vis_val}.}  Again, let $\var \in \mC^1(\R)$, and let $x_0 \in \R$ be such that
\[
\max \limits_{x \in \R} \big\{ (\underline{u} - \var)(x) \big\} = (\underline{u} - \var)(x_0) = 0.
\]
Because $\underline{u}$ is bounded, there exists a sequence $\{ \var_n \}_{n \in \N}$ of functions with compact support in $\mC^\infty(\R)$ such that, for an arbitrary $\epsilon_1 > 0$, there exists an $N \in \N$, such that, for all $x \in \R$, $n \ge N$, and $R \in \mathcal{R}$,
\begin{align}
\label{eq:unu_varn}
\int_{\R^+} \big| \underline{u}(x - r) - \var_n(x - r) \big| d F_R(r) \le \epsilon_1,
\end{align}
in which $F_R(r) := \int^\infty_0 {\bf 1}_{\{R(y) \le r\}} \, d F_Y(y)$  is the distribution function of $R$; see Corollary 9.7 in Wheeden and Zygmund \cite{WZ2015}.  Without loss of generality, we may assume that $\underline{u}(x) \le \var_n(x)$ for all $x \in \R$ and $n \in \N$. For each $\veps > 0$ and $n \in \N$, define the function $\Psi^{\veps}_n$ as follows:
\begin{align}
\label{eq:Psi_va}
\Psi^{\veps}_n(x) = \var(x)\,\chi^{\veps}(x) + \var_{n}(x) \big(1-\chi^{\veps}(x) \big), \quad \hbox{for}~x \in \R,
\end{align}
in which $\chi^{\veps} \in \mC^1(\R)$ satisfies
\begin{align*}
\begin{cases}
0 \le \chi^{\veps} (x) \le 1,  &\quad \text{for all} ~ x \in \R, \\
 \chi^{\veps}(x) = 1,   &\quad \text{if} ~ x \in {(x_0 - \veps, x_0 + \veps)},  \\
 \chi^{\veps}(x) = 0,   &\quad \text{if} ~ x \notin {(x_0 - 2\veps, x_0 + 2\veps)}.
 \end{cases}
\end{align*}
Note that $\Psi^{\veps}_n(x_0) = \underline{u}(x_0)$, and
\begin{align*}
\underline{u}(x) - \Psi^{\veps}_n(x) = \big(\underline{u}(x) - \var_n(x)\big) \big(1-\chi^{\veps}(x)\big) + \big(\underline{u}(x) - \var(x) \big)\chi^{\veps}(x) \le 0,
\end{align*}
for all $x \in \R$.  So, $\max \limits_{x \in \R} \big\{ (\underline{u} - \Psi^{\veps}_n)(x) \big\} = \big(\underline{u} - \Psi^{\veps}_n \big)(x_0)$, and $(\Psi^{\veps}_n)_x(x_0) = \var_x(x_0)$. Thus, by Definition \ref{def:glob_vis_tes}, we have
\begin{align*}
 F \big(x_0, \underline{u}(x_0), \var_x(x_0), \Psi^{\veps}_n(\cdot) \big) \le 0.
\end{align*}
Moreover, we have
\begin{align}
& \Big| F \big(x_0, \underline{u}(x_0), \var_x(x_0), \Psi^{\veps}_n(\cdot) \big) -  F \big(x_0, \underline{u}(x_0), \var_x(x_0), \underline{u}(\cdot) \big) \Big| \notag\\
& \quad = \la \Bigg| \inf_R \left[\left( (1 + \tet) \E R + \eta \E(YR) - \dfrac{\eta}{2} \, \E \big(R^2 \big) \right) \var_x (x_0)+ \E \Psi^{\veps}_n(x_0 - R) - \Psi^{\veps}_n (x_0) \right] \notag\\
&\quad \qquad - \inf_R \left[\left( (1 + \tet) \E R + \eta \E(YR) - \dfrac{\eta}{2} \, \E \big(R^2 \big) \right) \var_x(x_0) + \E \underline{u}(x_0 - R) - \underline{u} (x_0) \right]\Bigg| \notag\\
& \quad \le \la \sup_R \Big[ \E \Psi^{\veps}_n(x_0 - R) - \Psi^{\veps}_n (x_0) -\E \underline{u}(x_0 - R) + \underline{u} (x_0) \Big] \notag\\
& \quad = \la \sup_R \bigg[ \int_0^\infty \big( \Psi^{\veps}_n(x_0 - R(y)) - \underline{u} (x_0 - R(y)) \big)d F_Y(y) \bigg]  \notag\\
& \quad \le \la \sup_R \left[ \int_0^\infty \Big\{ \big|\Psi^{\veps}_n(x_0 - R(y)) - \var_n(x_0 - R(y)) \big| + \big| \var_n(x_0 - R(y)) - \underline{u}(x_0 - R(y)) \big| \Big\} dF_Y(y)\right].
\label{eq:com_F}
\end{align}
By combining \eqref{eq:unu_varn}, \eqref{eq:Psi_va} and \eqref{eq:com_F}, and by letting $\epsilon_1 \to 0$ and $\veps \to 0$, we obtain
\begin{align*}
 F \big(x_0, \underline{u}(x_0), \var_x(x_0), \underline{u}(\cdot) \big) \le 0.
\end{align*}
Hence, we have completed our proof.
\end{proof}

In the following proposition, we prove that Definitions \ref{def:strict_glob} and \ref{def:glob_vis_tes} are equivalent, that is, the definition of viscosity sub- and supersolutions does not rely on the maximum or minimum being strict.

\begin{prop}
Definitions {\rm \ref{def:strict_glob}} and {\rm \ref{def:glob_vis_tes}} are equivalent.
\end{prop}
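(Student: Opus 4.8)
The plan is to show that the two definitions differ only through the word \emph{strict}, and that a non-strict extremum of a $\mC^1$ test function can always be made strict by a harmless quadratic perturbation, so that one inclusion is immediate and the other follows by a limiting argument.

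The inclusion Definition \ref{def:glob_vis_tes} $\Rightarrow$ Definition \ref{def:strict_glob} is essentially trivial: a function obeying the viscosity inequality for \emph{every} admissible test function touching it at a global extremum of value zero, in particular obeys it along the smaller family of test functions that touch at a \emph{strict} such extremum. For supersolutions there is the extra point that Definition \ref{def:strict_glob} carries $\phi(\cdot)$ while Definition \ref{def:glob_vis_tes} carries $\bar{u}(\cdot)$ in the non-local slot of $F$; since $\bar{u}\ge\phi$ with equality at $x_0$, we have $\E\bar{u}(x_0-R)-\bar{u}(x_0)\ge\E\phi(x_0-R)-\phi(x_0)$ for every admissible $R$, and hence $F\big(x_0,\phi(x_0),\phi_x(x_0),\phi(\cdot)\big)\ge F\big(x_0,\bar{u}(x_0),\phi_x(x_0),\bar{u}(\cdot)\big)\ge0$, so the conclusion of Definition \ref{def:strict_glob} still holds.

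For the converse, suppose $\underline{u}-\var$ attains a (not necessarily strict) global maximum of zero at $x_0$, and for $\veps>0$ set $\var^{(\veps)}(x)=\var(x)+\veps(x-x_0)^2$. Then $\var^{(\veps)}\in\mC^1(\R)$, $\var^{(\veps)}(x_0)=\var(x_0)$, $\var^{(\veps)}_x(x_0)=\var_x(x_0)$, and $\underline{u}-\var^{(\veps)}$ has a \emph{strict} global maximum of zero at $x_0$, so Definition \ref{def:strict_glob} applies to $\var^{(\veps)}$. The point value, first derivative and indicator terms in $F\big(\cdot,\var^{(\veps)},\var^{(\veps)}_x,\var^{(\veps)}(\cdot)\big)$ differ from those in $F\big(\cdot,\var,\var_x,\var(\cdot)\big)$ by quantities that are $O(\veps)$ uniformly on bounded intervals, and for the non-local term we write $\var^{(\veps)}(x_0-R)-\var^{(\veps)}(x_0)=\var(x_0-R)-\var(x_0)+\veps R^2$ and use admissibility $0\le R(Y)\le Y$ together with $\E(Y^2)<\infty$ to bound the induced perturbation of $\inf_R[\,\cdot\,]$ by $\veps\,\E(Y^2)$. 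Letting $\veps\to0+$ yields $F\big(x_0,\var(x_0),\var_x(x_0),\var(\cdot)\big)\le0$ when $x_0\ne0$, and, running the same estimates along $x\to0-$ — where the perturbation of $F$ is uniformly $O(\veps)$ on a bounded left neighborhood of $0$, again because $\E Y,\E(Y^2)<\infty$ — the left-limit inequality at $x_0=0$. Since $\var(x_0)=\underline{u}(x_0)$ and $\underline{u}\le\var$, these are exactly the inequalities of Definition \ref{def:glob_vis_tes}. The supersolution case is symmetric, using $\phi^{(\veps)}(x)=\phi(x)-\veps(x-x_0)^2$, except that one must also replace $\phi(\cdot)$ by $\bar{u}(\cdot)$ in the non-local slot of $F$; for that we combine the quadratic perturbation with the glue-and-mollify construction of the preceding Proposition — replace $\phi$ off a small neighborhood of $x_0$ by a smooth $\phi_n\le\bar{u}$ with $\sup_R\int_{\R^+}|\bar{u}(x_0-r)-\phi_n(x_0-r)|\,dF_R(r)\to0$, subtract a further tiny quadratic to keep the minimum strict, apply Definition \ref{def:strict_glob} to the resulting test function, and then let the perturbation parameters and $n$ tend to their limits as in that Proposition.

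The delicate point, and the main obstacle, is the non-local term $\E w(x-R)-w(x)$ of $F$: because $R(Y)$ ranges over $[0,Y]$ with $Y$ unbounded, a perturbation of a test function that is negligible near $x_0$ can be sizeable far from $x_0$, so one genuinely needs the admissibility bound $0\le R\le Y$ and the moment assumption $\E(Y^2)<\infty$ to conclude that the contribution of the mollification vanishes in the limit. For supersolutions this is compounded by the need to strictify the extremum and, at the same time, align the test function with $\bar{u}$ away from $x_0$ while preserving the value zero and the derivative $\phi_x(x_0)$ at the contact point; reconciling these requirements is precisely what the glue-and-mollify device of the preceding Proposition is designed to do.
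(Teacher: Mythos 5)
Your proof is correct, and in two places it is actually \emph{more} careful than the paper's own proof, which only treats the subsolution case and asserts that the supersolution case ``follows similarly.''

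For the strictification you use the quadratic perturbation $\var^{(\veps)}(x)=\var(x)+\veps(x-x_0)^2$, whereas the paper uses the bounded perturbation $\veps\varpi(x)=\veps\bigl(1-e^{-(x-x_0)^2}\bigr)$. Both work: yours is unbounded, so controlling the nonlocal perturbation requires $\E\bigl(Y^2\bigr)<\infty$ (which the paper does assume); the paper's choice makes the nonlocal perturbation bounded by $\veps$ automatically. This is only a stylistic difference.

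The substantive difference is your handling of supersolutions. You correctly note that, for supersolutions, Definitions \ref{def:strict_glob} and \ref{def:glob_vis_tes} differ in \emph{two} respects, not one: besides strictness, \eqref{eq:vis_super} carries $\phi(\cdot)$ in the nonlocal slot while Definition \ref{def:glob_vis_tes} carries $\bar{u}(\cdot)$. Strictification alone therefore only yields $F\bigl(x_0,\bar{u}(x_0),\phi_x(x_0),\phi(\cdot)\bigr)\ge 0$, which is the wrong inequality to transfer, since $\bar{u}\ge\phi$ globally with equality at $x_0$ forces
\[
F\bigl(x_0,\bar{u}(x_0),\phi_x(x_0),\bar{u}(\cdot)\bigr)\le F\bigl(x_0,\bar{u}(x_0),\phi_x(x_0),\phi(\cdot)\bigr).
\]
Your import of the glue-and-mollify construction from the preceding proposition (approximating $\bar{u}$ from below in $L^1(dF_R)$ uniformly in $R$ and pasting near $x_0$) is exactly what is needed to close that gap; the same observation also underlies your short argument for the ``easy'' direction for supersolutions, which the paper calls ``automatic'' without comment. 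One remark: as printed, Definitions \ref{def:glob_vis_tes} and \ref{def:glob_vis_val} have \emph{identical} supersolution clauses (both with $\bar{u}(\cdot)$), so it is plausible that the paper intends $\phi(\cdot)$ in the supersolution clause of Definition \ref{def:glob_vis_tes}; under that reading the paper's ``follows similarly'' is literally true and you have done extra work, but your argument covers the definitions as actually written and is sound either way.
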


\begin{proof}
We prove the statement for viscosity subsolutions only; the proof for viscosity supersolutions follows similarly.  Because the set of test functions under Definition \ref{def:glob_vis_tes} contains the set of test functions under Definition \ref{def:strict_glob}, it is automatic that, if a function is a viscosity subsolution under Definition \ref{def:glob_vis_tes}, then it is a viscosity subsolution under Definition \ref{def:strict_glob}.

To show the converse, suppose $\underline{u}$ is a viscosity subsolution under Definition \ref{def:strict_glob}.  Let $\var \in \mC^1(\R)$, and let $x_0 \in \R$ be such that $\underline{u} - \var$ reaches a global maximum (not necessarily strict) of zero at $x = x_0$.  In this proof, we only consider the case for which $x_0 \ne 0$.  The case for which $x_0 = 0$ follows similarly by considering the left-limit of $F$ at zero.

For every $\veps > 0$, define the function $\var^\veps$ on $\R$ by
$$
\var^{\veps}(x) = \var(x) + \veps \varpi(x),
$$
in which $\varpi(x) = 1- e^{-(x-x_0)^2}$; then, $\underline{u}(x) \le \var(x) \le \var^{\veps}(x)$ with equality in the second inequality if and only if $x = x_0$.  In other words, $\underline{u} - \var^{\veps}$ has a strict maximum of zero at $x = x_0$.  Hence, by Definition \ref{def:strict_glob},
$$
F \big(x_0, \var^{\veps}(x_0), \var^{\veps}_x(x_0), \var^{\veps}(\cdot) \big) \le 0,
$$
that is,
\begin{align}
\label{eq:var_va}
&\bet \var^\veps(x_0) + \rho \big( \var^\veps(x_0) - 1 \big) {\bf 1}_{\{x_0 < 0\}} + \kap \var^\veps_x (x_0) \notag\\
& - \la \inf_R \left[ \left( (1 + \tet) \E R + \eta \E(YR) - \dfrac{\eta}{2} \, \E \big(R^2 \big) \right) \var^\veps_x (x_0) + \E \var^\veps(x_0 - R) - \var^\veps (x_0) \right] \le 0.
\end{align}
Also, we have
\begin{align}
\label{eq:div_min}
& \inf_R \left[ \left( (1 + \tet) \E R + \eta \E(YR) - \dfrac{\eta}{2} \, \E \big(R^2 \big) \right) \var^\veps_x(x_0) + \E \var^\veps(x_0 - R) - \var^\veps (x_0) \right] \notag\\
& \le \inf_R \left[ \left( (1 + \tet) \E R + \eta \E(YR) - \dfrac{\eta}{2} \, \E \big(R^2 \big) \right) \var_x(x_0) + \E \var(x_0 - R) - \var (x_0) \right]\notag \\
& \quad + \veps \sup_R \left[ \left( (1 + \tet) \E R + \eta \E(YR) - \dfrac{\eta}{2} \, \E \big(R^2 \big) \right) \varpi_x(x_0) + \E \varpi(x_0 - R) - \varpi (x_0) \right].
\end{align}
By combining \eqref{eq:var_va} and \eqref{eq:div_min}, we get
\begin{align*}
&\bet \underline{u}(x_0) + \rho \big( \underline{u}(x_0) - 1 \big) {\bf 1}_{\{x_0 < 0\}} + \kap \big(\var_x(x_0) + \veps \varpi_x(x_0)\big)  \\
&- \la \inf_R \left[ \left( (1 + \tet) \E R + \eta \E(YR) - \dfrac{\eta}{2} \, \E \big(R^2 \big) \right) \var_x(x_0) + \E \var(x_0 - R) - \var (x_0) \right] \\
&- \veps \la \sup_R \left[ \left( (1 + \tet) \E R + \eta \E(YR) - \dfrac{\eta}{2} \, \E \big(R^2 \big) \right) \varpi_x(x_0) + \E \varpi(x_0 - R) - \varpi (x_0) \right] \le 0.
\end{align*}
By letting $\veps$ go to $0$, we obtain
$$
F \big(x_0, \underline{u}(x_0), \var_x(x_0), \var(\cdot) \big) \le 0,
$$
that is, $\underline{u}$ is a viscosity subsolution under Definition \ref{def:glob_vis_tes}, which is what we wished to prove.
\end{proof}

\begin{prop}
Definitions {\rm \ref{def:local_vis_val}} and {\rm  \ref{def:glob_vis_val}} are equivalent.
\end{prop}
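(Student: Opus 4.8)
The plan is to prove the two implications separately, and only for subsolutions; the supersolution statement follows by the symmetric argument (local/global minima in place of maxima, $\ge$ in place of $\le$, and ``fattening'' the test function from below by a constant instead of from above).

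\emph{Definition \ref{def:local_vis_val} $\Rightarrow$ Definition \ref{def:glob_vis_val}.} This direction is immediate: if $\var \in \mC^1(\R)$ is such that $\underline u - \var$ attains a global maximum of zero at $x_0$, then $\underline u - \var$ in particular attains a local maximum at $x_0$; since $F\big(x_0, \underline u(x_0), \var_x(x_0), \underline u(\cdot)\big)$ depends on the test function only through $\var_x(x_0)$ (and on $\underline u$ only through $\underline u(x_0)$ and the nonlocal term $\inf_R\big[\cdots + \E \underline u(x_0 - R) - \underline u(x_0)\big]$), the inequality required by Definition \ref{def:glob_vis_val} is exactly the one provided by Definition \ref{def:local_vis_val}. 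The same observation applies at $x_0 = 0$ to the left-limit of $F$.

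\emph{Definition \ref{def:glob_vis_val} $\Rightarrow$ Definition \ref{def:local_vis_val}.} Here I would modify the test function. Suppose $\underline u$ is a subsolution under Definition \ref{def:glob_vis_val}, and let $\var \in \mC^1(\R)$ and $x_0 \in \R$ be such that $\underline u - \var$ has a local maximum at $x_0$. Adding the constant $\underline u(x_0) - \var(x_0)$ to $\var$ changes neither $\var_x$ nor the value of $F$ at $x_0$, so I may assume $\var(x_0) = \underline u(x_0)$; then there is an $r > 0$ with $\underline u(x) \le \var(x)$ for all $x$ with $|x - x_0| \le r$. Now choose a constant $K \ge \max\!\big\{\frac{\rho}{\rho+\bet},\ \sup_{|x - x_0| \le r}\var(x)\big\}$ and a cutoff $\chi \in \mC^1(\R)$ with $0 \le \chi \le 1$, $\chi \equiv 1$ on $[x_0 - r/2, x_0 + r/2]$ and $\chi \equiv 0$ off $(x_0 - r, x_0 + r)$, and set $\tilde\var := \chi\,\var + (1 - \chi)K \in \mC^1(\R)$. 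Then $\tilde\var \equiv \var$ on $[x_0 - r/2, x_0 + r/2]$, so $\tilde\var(x_0) = \underline u(x_0)$ and $\tilde\var_x = \var_x$ there; on $r/2 \le |x - x_0| \le r$ one has $\tilde\var \ge \var \ge \underline u$ because $K \ge \var(x)$ and $\underline u(x) \le \var(x)$ there; and on $|x - x_0| \ge r$ one has $\tilde\var = K \ge \frac{\rho}{\rho+\bet} \ge \underline u$, since every viscosity subsolution is by hypothesis valued in $[0, \frac{\rho}{\rho+\bet}]$. Hence $\underline u - \tilde\var$ attains a global maximum of zero at $x_0$, and Definition \ref{def:glob_vis_val} yields $F\big(x_0, \underline u(x_0), \tilde\var_x(x_0), \underline u(\cdot)\big) \le 0$, that is, $F\big(x_0, \underline u(x_0), \var_x(x_0), \underline u(\cdot)\big) \le 0$. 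When $x_0 = 0$ the same $\tilde\var$ works, since $\tilde\var$ and $\var$ agree on the two-sided neighborhood $[-r/2, r/2]$ of $0$, so the left-limits of $F$ along $\tilde\var$ and along $\var$ coincide.

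I do not expect any genuine obstacle here. The only two points that need care are that the $[0, \frac{\rho}{\rho+\bet}]$-valued restriction built into the definition of a viscosity sub/supersolution is precisely what lets the global ``fattening'' be carried out with a single constant $K$, and that the modification leaves the test function untouched on a full two-sided neighborhood of $x_0$, so the $x_0 = 0$ left-limit clause of the two definitions is unaffected.
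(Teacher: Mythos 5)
Your proof is correct and takes essentially the same approach as the paper's: both handle the easy direction by observing that the test pairs for the global-max-of-zero definition are a subset of those for the local-max definition, and both handle the harder direction by a cutoff "fattening" of the local test function into a global one, closing the construction with the a priori bound $[0, \frac{\rho}{\rho+\bet}]$ on sub/supersolutions (you use an upper bound $K$ for the subsolution case; the paper uses a lower bound $H$ for the supersolution case, and each of you notes that the other case is symmetric).
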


\begin{proof}
We prove the statement for viscosity supersolutions only; the proof for viscosity subsolutions follows similarly.  Because the set of test functions under Definition \ref{def:local_vis_val} contains the set of test functions under Definition \ref{def:glob_vis_val}, it is automatic that, if a function is a viscosity supersolution under Definition \ref{def:local_vis_val}, then it is a viscosity supersolution under Definition \ref{def:glob_vis_val}.

To show the converse, suppose $\bar{u}$ is a viscosity supersolution under Definition \ref{def:glob_vis_val}.  Let $\phi \in \mC^1(\R)$, and let $x_0 \in \R$ be such that $\bar{u} - \phi$ reaches a local minimum (not necessarily zero) at $x = x_0$.  Then, there exists an $h > 0$ such that $\bar{u}(x) - \phi(x) \ge \bar{u}(x_0) - \phi(x_0)$ for all $x \in (x_0 - h, x_0 + h)$. By Urysohn's lemma, there is a function $s \in \mC^{\infty}(\R)$ satisfying
\begin{align*}
\begin{cases}
0 \le s (x) \le 1,   &\quad \text{for all} ~ x \in \mathbb{R}, \\
s(x) = 1,  &\quad \text{if} ~ x \in (x_0 - h/2, x_0 + h/2),  \\
s(x) = 0,  &\quad \text{if} ~ x \notin (x_0 - h, x_0 + h).
\end{cases}
\end{align*}
Define the function $\tilde \phi$ on $\R$ by
$$
\tilde{\phi}(x) = s(x)\big(\phi(x) - \phi(x_0) + \bar{u}(x_0) \big) + \big(1 - s(x) \big) H,
$$
in which $H$ is a lower bound of $\bar{u}$ on $\R$.  Because $\bar{u}(x) \ge \phi(x) - \phi(x_0) + \bar{u}(x_0)$ for $x \in (x_0 - h, x_0 + h)$, and $s(x) = 0$ for $x \notin (x_0 - h, x_0 + h)$, we have
\begin{align*}
\bar{u}(x) = s(x) \bar{u}(x) + \big(1 - s(x) \big) \bar{u}(x) \ge s(x) \big(\phi(x) - \phi(x_0) + \bar{u}(x_0) \big) + \big(1 - s(x) \big) H = \tilde{\phi}(x).
\end{align*}
Thus, $\bar{u}(x) \ge \tilde{\phi}(x)$ for all $x\in {\R}$ and $\bar{u}(x_0) = \tilde{\phi}(x_0)$, that is, $\bar{u} - \tilde{\phi}$
reaches a global minimum of zero at $x_0$.  By Definition \ref{def:glob_vis_val}, we have
$$
F\big(x_0, \bar{u}(x_0), \tilde{\phi}_x(x_0), \bar{u}(\cdot) \big) \ge 0.
$$
Because $\tilde{\phi}(x) = \phi(x) - \phi(x_0) + \bar{u}(x_0)$ for $x \in (x_0 - h/2, x_0 + h/2)$,  we have $ \tilde{\phi}_x(x_0)={\phi}_x(x_0)$. Therefore, we obtain
$$
F(x_0, \bar{u}(x_0), {\phi}_x(x_0), \bar{u}(\cdot)) \ge 0,
$$
that is, $\bar{u}$ is a viscosity supersolution under Definition \ref{def:local_vis_val}, which is what we wished to prove.
\end{proof}

\section{Proof that $v^\del$ is a viscosity subsolution of $F^\del = 0$}\label{sec:B}

Let $\var \in \mC^1(\R)$ be a test function such that $v^\del - \var$ reaches a strict, global maximum of $0$ at $x = x_0$.  We want to prove that
\[
F^\del \big( x_0, v^\del(x_0), \var_x(x_0), \var(\cdot) \big) \le 0.
\]
To that end, define $y = x + \del$ and $y_0 = x_0 + \del$.  Also, define a new test function $\var^\del$ by
\[
\var^\del(y) = \var(y - \del),
\]
for $y \in \R$. Then, $\var(x_0) = \var^\del(y_0)$, $\var_x(x_0) = \var^\del_y(y_0)$, and $\E \var(x_0 - R) - \var(x_0) = \E \var^\del(y_0 - R) - \var^\del(y_0)$.

If $x_0 < -\del$ or $x_0 \ge 0$, then
\begin{align*}
&F^\del \big( x_0, v^\del(x_0), \var_x(x_0), \var(\cdot) \big) = F \big( x_0, v(x_0 + \del), \var_x(x_0), \var(\cdot) \big)  \\
&= F \big( y_0 - \del, v(y_0), \var^\del_y(y_0), \var^\del(\cdot) \big) = F \big( (y_0)- , v(y_0), \var^\del_y(y_0), \var^\del(\cdot) \big) \le 0.
\end{align*}
If $-\del \le x_0 < 0$, then
\begin{align*}
&F^\del \big( x_0, v^\del(x_0), \var_x(x_0), \var(\cdot) \big) \\
&= \left( 1 + \dfrac{x_0}{\del} \right) \rho \big(1 - v(x_0 + \del) \big) + F \big( x_0, v(x_0 + \del), \var_x(x_0), \var(\cdot) \big)  \\
&= \left( 1 + \dfrac{x_0}{\del} \right) \rho \big(1 - v(y_0) \big) +  F \big( x_0, v(y_0), \var^\del_y(y_0), \var^\del(\cdot) \big) \\
&= \dfrac{x_0}{\del} \, \rho \big(1 - v(y_0) \big) +  F \big(y_0, v(y_0), \var^\del_y(y_0), \var^\del(\cdot) \big) \le 0.
\end{align*}
Therefore, $v^\del$ is a viscosity subsolution of $F^\del = 0$, which is what we wished to prove.  \qed



\end{document}